\documentclass[11pt]{article}

\usepackage[T1]{fontenc}
\usepackage{lmodern}
\usepackage{amsmath,amssymb,amsthm,mathtools}
\usepackage{microtype}
\usepackage[margin=1in]{geometry}
\usepackage{enumitem}
\usepackage{xcolor}
\usepackage{framed}
\usepackage{aliascnt}
\usepackage[hidelinks]{hyperref}
\hypersetup{pdftitle={Exact Universality of Online Discrepancy}}
\usepackage[nameinlink,noabbrev]{cleveref}

\allowdisplaybreaks
\numberwithin{equation}{section}

\newtheorem{theorem}{Theorem}[section]

\newaliascnt{proposition}{theorem}
\newtheorem{proposition}[proposition]{Proposition}
\aliascntresetthe{proposition}

\newaliascnt{lemma}{theorem}
\newtheorem{lemma}[lemma]{Lemma}
\aliascntresetthe{lemma}

\newaliascnt{corollary}{theorem}
\newtheorem{corollary}[corollary]{Corollary}
\aliascntresetthe{corollary}

\theoremstyle{definition}
\newaliascnt{definition}{theorem}
\newtheorem{definition}[definition]{Definition}
\aliascntresetthe{definition}

\theoremstyle{remark}
\newaliascnt{remark}{theorem}

\aliascntresetthe{remark}

\crefname{theorem}{Theorem}{Theorems}
\Crefname{theorem}{Theorem}{Theorems}
\crefname{proposition}{Proposition}{Propositions}
\Crefname{proposition}{Proposition}{Propositions}
\crefname{lemma}{Lemma}{Lemmas}
\Crefname{lemma}{Lemma}{Lemmas}
\crefname{corollary}{Corollary}{Corollaries}
\Crefname{corollary}{Corollary}{Corollaries}
\crefname{definition}{Definition}{Definitions}
\Crefname{definition}{Definition}{Definitions}
\crefname{remark}{Remark}{Remarks}
\Crefname{remark}{Remark}{Remarks}
\crefname{section}{Section}{Sections}
\Crefname{section}{Section}{Sections}
\crefname{appendix}{Appendix}{Appendices}
\Crefname{appendix}{Appendix}{Appendices}

\newcounter{algorithm}
\crefname{algorithm}{Algorithm}{Algorithms}
\Crefname{algorithm}{Algorithm}{Algorithms}
\newenvironment{algorithmbox}[1]{%
  \begin{figure}[!ht]%
  \begin{minipage}{\linewidth}%
  \refstepcounter{algorithm}%
  \begin{framed}\noindent\textbf{Algorithm~\thealgorithm} (#1).\par\smallskip
}{\end{framed}\end{minipage}\end{figure}}

\newcommand{\R}{\mathbb{R}}
\newcommand{\E}{\mathbb{E}}
\newcommand{\Pp}{\mathbb{P}}
\newcommand{\1}{\mathbf{1}}
\newcommand{\cF}{\mathcal{F}}
\newcommand{\cG}{\mathcal{G}}
\newcommand{\cH}{\mathcal{H}}
\newcommand{\cM}{\mathcal{M}}
\newcommand{\norm}[1]{\lVert #1\rVert}
\newcommand{\esssup}{\operatorname*{ess\,sup}}
\newcommand{\dist}{\operatorname{dist}}
\newcommand{\aonmu}{\alpha_{\mathrm{on}}^{\mathrm{SBP},\mu}}
\newcommand{\cstar}{c_{*}}
\newcommand{\ca}{c_{\alpha}}
\newcommand{\eps}{\varepsilon}
\newcommand{\Law}{\operatorname{Law}}
\newcommand{\Var}{\operatorname{Var}}
\newcommand{\dd}{\,\mathrm{d}}

\title{Exact Universality of Online Discrepancy}
\author{%
  Sunghyeon Jo\\
  {\small Georgia Institute of Technology and QED Audit}\\
  {\small\texttt{sjo65@gatech.edu}}
  \and
  Taekyun Lee\\
  {\small The University of Texas at Austin}\\
  {\small\texttt{taekyun@utexas.edu}}
}
\date{\today}

\begin{document}
\maketitle

\begin{abstract}
We study online vector balancing with $N$ random vectors in $\R^M$
revealed sequentially, where each vector must be assigned an irrevocable
sign upon arrival.  The goal is to minimize the expected $\ell^\infty$ norm
of the final signed sum.

For i.i.d.\ entries with mean zero, variance one, and a finite fourth
moment, we prove that, as $M/N\to\alpha\in(0,\infty)$, the optimal value
divided by $\sqrt N$ converges to a limit $R_\alpha$ independent of the
entry distribution.  This limit is the stochastic control value identified
for Gaussian inputs by Fiedler, Jackson, Lacker, and Niles-Weed.  In
particular, it determines the exact asymptotic optimum for Rademacher inputs.

For every $\kappa>R_\alpha$, we construct a randomized online algorithm
whose final signed sum has $\ell^\infty$ norm at most $\kappa\sqrt N$ with
high probability; for $\kappa<R_\alpha$, every online algorithm has vanishing
success probability.  Consequently, the online threshold of the symmetric
binary perceptron is universal at every positive margin.  The main step is
a coupling that transfers Brownian controls to non-Gaussian inputs, while
truncation controls rare large entries.
\end{abstract}

\section{Introduction}
\label{sec:introduction}

In online vector balancing, random vectors $\xi_1,\ldots,\xi_N\in\R^M$
arrive one at a time.  After seeing $\xi_k$, an online algorithm chooses
$\eps_k\in\{-1,+1\}$ before the next vector is revealed.  Its normalized terminal
discrepancy is
\[
D_{M,N}:=\left\|\frac1{\sqrt N}\sum_{k=1}^N\eps_k\xi_k\right\|_\infty.
\]
We assume that all entries are independent with a common law $\mu$ of mean
zero and variance one, and consider $M/N\to\alpha\in(0,\infty)$.
Randomized algorithms may use a seed independent of the entries.

For Rademacher entries, taking the values $-1$ and $+1$ with equal
probability, this is a classical problem in online discrepancy.
Bansal and Spencer \cite[Theorem~3.4]{BansalSpencer20} give an online
algorithm with unnormalized discrepancy $O(\sqrt M)$ with high probability
at a prescribed terminal time.  This gives $D_{M,N}=O(1)$ in the proportional regime,
where Gamarnik, K{\i}z{\i}lda\u{g}, Perkins, and Xu
\cite[Theorem~10]{GamarnikKizildagPerkinsXu23} establish a lower bound of
constant order (for $M>N$, by restriction to any $N$ rows).
These results leave open the exact limiting optimum.
Recently, Fiedler, Jackson, Lacker, and Niles-Weed
\cite{FiedlerJacksonLackerNilesWeed26} determined that optimum for
Gaussian entries by identifying a one-dimensional Brownian control problem.
They also proved the same lower bound for every centered entry
distribution with variance one and a finite fourth moment.  Their upper bound uses
Gaussian structure, and their Section~1.6 asks whether a matching
non-Gaussian upper bound holds.

We resolve this question under the same assumption on the fourth moment.
In particular, the exact limiting optimum for Rademacher inputs equals
the Gaussian optimum.

\subsection{Our results}
\label{sec:main-results}

Let $B$ be a standard Brownian motion.  For $T>0$, define
\[
V_T^\infty:=\inf_u
\left\|B_T+\int_0^T u_t\,\dd t\right\|_{L^\infty},
\]
where $\|Y\|_{L^\infty}$ denotes the essential supremum of $|Y|$.
The infimum is over controls progressively measurable in the natural
Brownian filtration and satisfying $\E u_t^2\le2/\pi$ for almost every
$t\in[0,T]$.  This is the smallest radius of a centered interval containing
the terminal controlled process almost surely.  Following
\cite{FiedlerJacksonLackerNilesWeed26}, set
\[
R_\alpha:=\sqrt{\alpha}\,V_{1/\alpha}^\infty.
\]
\Cref{sec:benchmark} recalls the Brownian scaling and properties of this value.
We write $(x)_+:=\max\{x,0\}$ for the positive part.

\begin{theorem}[Universality of online discrepancy]
\label{thm:main}
Fix $\alpha>0$ and let $M/N\to\alpha$.  If $\mu$ has mean zero, variance
one, and a finite fourth moment, then
\[
\inf_{\eps\,\mathrm{online}}\E_\mu D_{M,N}\longrightarrow R_\alpha.
\]
Moreover:
\begin{enumerate}[label=(\roman*),leftmargin=2.4em]
\item For every fixed $\kappa>R_\alpha$ and $K<\infty$, there is a
randomized online algorithm, depending only on $(\alpha,\kappa,K)$, such
that for every centered law $\mu$ with variance one and $\E_\mu|\xi|^4\le K$,
\[
\Pp_\mu(D_{M,N}>\kappa)+\E_\mu(D_{M,N}-\kappa)_+
\longrightarrow0.
\]
\item For every fixed $\kappa<R_\alpha$,
\[
\sup_{\eps\,\mathrm{online}}\Pp_\mu(D_{M,N}\le\kappa)
\longrightarrow0.
\]
\end{enumerate}
\end{theorem}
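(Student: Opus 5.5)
The lower bound (ii) and the inequality $\liminf\inf_\eps\E_\mu D_{M,N}\ge R_\alpha$ are already established by Fiedler, Jackson, Lacker, and Niles-Weed for every centered variance-one law with a finite fourth moment. I would therefore cite \cite{FiedlerJacksonLackerNilesWeed26} for part (ii). The lower half of the limit follows from (ii) because $\E D\ge\kappa\,\Pp(D>\kappa)$ for every $\kappa<R_\alpha$. The upper half of the limit follows from (i), since $\E D\le\kappa+\E(D-\kappa)_+$ and $\kappa\downarrow R_\alpha$ is arbitrary. The real work is (i): a non-Gaussian online algorithm achieving any $\kappa>R_\alpha$ with high probability.

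\textbf{Construction for (i).} First fix a near-optimal Brownian control. By the properties of $V_T^\infty$ recalled in \cref{sec:benchmark}, choose $\delta>0$ and a control $u$ with $\E u_t^2\le 2/\pi$ such that $|B_T+\int_0^T u_t\dd t|\le R_\alpha/\sqrt\alpha+\delta$ almost surely, where $T=1/\alpha$. I would regularize $u$ so that it is bounded and is a continuous (Lipschitz) feedback function of the current state and time, possibly randomized, with a strict budget $\E u_t^2<2/\pi$. This leaves slack to absorb error terms.

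Next, implement the control coordinatewise. Write the running sum $S_k=\sum_{j\le k}\eps_j\xi_j$. Each coordinate $i$ evolves as $S_{k,i}/\sqrt N\approx$ a random walk, and the algorithm wants drift $u(t,S_{k,i}/\sqrt N)/\sqrt N$ in each coordinate, with $t=k/N\cdot(\text{scaling})$. The natural sign rule is to pick $\eps_k$ to maximize $\sum_i w_i\,\xi_{k,i}$ for weights $w_i=u(t,S_{k-1,i}/\sqrt N)$, that is, $\eps_k=\operatorname{sign}\langle w,\xi_k\rangle$. Conditionally on the past, the coordinate $i$ then gets expected increment $\E[\xi_{k,i}\operatorname{sign}\langle w,\xi_k\rangle]$. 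By a CLT or Lindeberg argument for $\langle w,\xi_k\rangle$, this is approximately $\sqrt{2/\pi}\,w_i/\|w\|_2$. With $\|w\|_2^2\approx M\,\E u^2$ by the empirical law of states across coordinates, the drift per coordinate is $\approx\sqrt{2/\pi}\,u_i/\sqrt{M\E u^2}$. After scaling, this is at least the required drift when $\E u^2\le 2/\pi$, with randomization or time-sharing used to match exactly.

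Error terms come from non-Gaussianity of $\xi_{k,i}$ inside $\langle w,\xi_k\rangle$ and from the correlation between $\xi_{k,i}$ and the sign beyond linear order. I would truncate entries at level $N^{1/4-}$ or so, using the fourth moment, so that rare large entries contribute negligibly to $\ell^\infty$ error via a union bound over $M\asymp N$ coordinates.

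\textbf{Coupling and concentration.} Here I would show that each coordinate's path is close to an independent copy of the controlled diffusion. The cross-coordinate dependence enters only through $\|w\|_2$, which concentrates, as a propagation-of-chaos or mean-field statement. Then the almost-sure terminal bound for the diffusion plus uniform-in-coordinate martingale deviation bounds (Freedman, with truncation) give $\max_i|S_{N,i}|/\sqrt N\le\kappa$ with probability $\to1$. The $\E(D-\kappa)_+$ term is handled by a crude $L^2$ bound on $D$ together with the vanishing failure probability.

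\textbf{Main obstacle.} The crux is the \emph{uniform over all $M$ coordinates} transfer. The diffusion bound is almost sure, but the approximation error per coordinate must be $o(1)$ simultaneously for $\asymp N$ coordinates, which requires tail bounds of order $e^{-c\log^2 N}$ rather than mere convergence in law. I would try first to make $u$ a bounded Lipschitz feedback with a margin of $\delta$ in the terminal interval. Then a Gronwall-type pathwise comparison between the discrete coordinate and a Skorokhod-embedded Brownian path bounds the deviation by the embedding error plus the drift-approximation error. Both are controlled by truncated exponential martingale inequalities and a union bound.
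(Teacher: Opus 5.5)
You reduce the limit to (i) and (ii) correctly. One attribution point: (ii) is not stated in \cite{FiedlerJacksonLackerNilesWeed26}. The paper deduces it from their compactness theorem by applying Portmanteau to the law of the random empirical path measure, using that $\{m:\mathcal E(m)\le\kappa\}$ is closed.

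\textbf{The gap is in (i).} You ask for a control that is bounded (a Lipschitz feedback) and also confines the terminal value almost surely with a margin. These requirements are incompatible: if $|u|\le K$, then $|B_T+\int_0^Tu_t\,\dd t|\ge|B_T|-KT$, which exceeds any level with positive probability. Weakening to high probability does not help. All $M\asymp N$ nearly independent coordinates must succeed simultaneously, so each needs failure probability $o(1/N)$. A fixed bounded control instead has a fixed positive failure probability per coordinate. With probability tending to one some coordinate then lands outside $[-\kappa,\kappa]$, however accurate your coupling is. A confining drift must therefore be singular at the terminal time; the paper's F\"ollmer drift satisfies only $|v_h(t,x)|\le|x|/(h-t)$. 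Your Gronwall-type comparison between the walk and the diffusion then fails, because the Lipschitz constant is not integrable up to the end.

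\textbf{How the paper handles this.} Before a split time $1-h$ it runs only a bounded simple predictable control, asking merely that the $L^s$ distance to $[-a,a]$ be small. Continuity of $\mathcal R$ supplies $c_0<\ca$ with $\mathcal R(c_0)<a$, which frees budget for the final block. In the final block it applies two drifts:
\begin{enumerate}
\item a fixed correction $(\Pi_{[-a,a]}(\widetilde X_{k_0})-\widetilde X_{k_0})/h_N$, whose empirical norm is small exactly because the deficit moment is small;
\item a F\"ollmer drift $v_h=\partial_x\log P_{h-t}f_h$ confining the fresh noise to an interval of radius $s_p(h,c_F)\to0$.
\end{enumerate}
Stability under the coupling errors comes not from Gronwall but from dissipativity, $-1/(h-t)\le\partial_xv_h\le0$. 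Each Euler map is then monotone and one-Lipschitz, so errors stay below twice the accumulated perturbation (\cref{lem:dissipative-recursion}). The drift is also switched off $\delta_N$ before the end. Because this drift is unbounded in some coordinates, the paper further needs:
\begin{itemize}
\item empirical $L^s$ control of the requested drift;
\item a random Rademacher perturbation keeping $\norm{v}_\infty$ small for the CLT step;
\item admissibility tests with a fallback drift, so that $\E(D-\kappa)_+$ can be bounded on exceptional trajectories.
\end{itemize}

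\textbf{A smaller gap.} Cross-coordinate dependence does not enter only through $\norm{w}_2$. Conditionally on the chosen sign the column is tilted, so the martingale increments are correlated across coordinates. Per-coordinate Skorokhod embeddings give Brownian marginals but not the near-independence needed for concentration of empirical drift norms. The paper obtains this from the coupling $\eps\zeta=SZ+r/\sqrt N+e+D$ of \cref{lem:product-innovation}, in which $(S_k,Z_k)$ have a fixed product law independent of the past. It then applies a row-wise strong approximation conditional on the signs.
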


The principal application is to the symmetric binary perceptron (SBP),
which asks for signs satisfying
\[
\left|\sum_{k=1}^N\eps_k\xi_{k,i}\right|\le\kappa\sqrt N
\qquad\text{for every }i\le M,
\]
equivalently, $D_{M,N}\le\kappa$.  Offline one chooses all signs after
seeing the entire input matrix; online one fixes $\eps_k$ when column $k$
arrives.  Let $\aonmu(\kappa)$ be the supremum of the densities $\alpha$
at which online algorithms satisfy all constraints with probability
tending to one, as defined precisely in
\eqref{eq:SBP-threshold-definition}.

\begin{corollary}[The online SBP threshold]
\label{cor:sbp}
For every $\kappa>0$ and every $\mu$ as in \cref{thm:main},
$\aonmu(\kappa)$ is independent of $\mu$ and is the unique $\alpha>0$
satisfying
\[
\kappa=R_\alpha=\sqrt{\alpha}\,V^\infty_{1/\alpha}.
\]
Every fixed density below this value is achievable by an online
algorithm.  At every fixed density above it, the supremal success
probability tends to zero.
\end{corollary}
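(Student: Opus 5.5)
The corollary follows from \cref{thm:main} once we know that $\alpha\mapsto R_\alpha$ is a continuous, strictly increasing bijection of $(0,\infty)$ onto $(0,\infty)$. I would prove that first and then read off the threshold.

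\emph{Step 1: monotonicity and range of $R_\alpha$.} Write $R_\alpha=\sqrt\alpha\,V^\infty_{1/\alpha}$. By Brownian scaling (recalled in \cref{sec:benchmark}), $R_\alpha$ is the value of the problem on horizon $1$ in which the noise is $B_1$, the control budget is $\E u_t^2\le (2/\pi)/\alpha$, and the target is still $\|\cdot\|_{L^\infty}$. Monotonicity then comes from the budget.
\begin{itemize}[leftmargin=2em]
\item \emph{Weak monotonicity.} Increasing $\alpha$ shrinks the admissible control set, so $R_\alpha$ is nondecreasing.
\item \emph{Strict monotonicity.} Take $\alpha<\alpha'$ and any control $u$ admissible for $\alpha'$. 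Then $(1+\delta)u$ is admissible for $\alpha$ when $\delta>0$ is small. I would use it, via a concavity or convex-interpolation argument in the budget, to show that the terminal radius drops by a definite amount. Concretely, mix the optimal $\alpha'$-control with a deterministic feedback such as $-c\,\mathrm{sign}(X_t)$ that pushes toward $0$, using the extra budget.
\item \emph{Continuity.} The same scaling of controls, $u\mapsto\lambda u$, changes the terminal value by at most $|1-\lambda|\cdot\E\!\int|u|$ in a pathwise sense. This needs care in $L^\infty$. Alternatively, I would import continuity from the properties listed in \cref{sec:benchmark}.
\item \emph{Limits.} As $\alpha\to0$ the budget is unlimited, and a bang-bang control confines the process to arbitrarily small intervals, so $R_\alpha\to0$. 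As $\alpha\to\infty$ the control is negligible, and $R_\alpha\to\infty$ because $\|B_1+\int u\|_{L^\infty}$ is already infinite for a small budget. More precisely, $V^\infty_T\ge$ a quantity growing with $\sqrt{T}$ relative to the budget.
\end{itemize}
If \cref{sec:benchmark} already records these facts (as the Fiedler--Jackson--Lacker--Niles-Weed paper does), I would simply cite them. I expect strict monotonicity to be the main obstacle, since non-strictness would break the uniqueness claim.

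\emph{Step 2: the threshold.} Fix $\kappa>0$ and let $\alpha_\kappa$ be the unique solution of $R_{\alpha}=\kappa$.
\begin{itemize}[leftmargin=2em]
\item \emph{Achievability.} For $\alpha<\alpha_\kappa$ we have $R_\alpha<\kappa$ by strict monotonicity. \Cref{thm:main}(i) then gives an online algorithm with $\Pp_\mu(D_{M,N}>\kappa)\to0$, so $\alpha$ is achievable and $\aonmu(\kappa)\ge\alpha_\kappa$.
\item \emph{Impossibility.} For $\alpha>\alpha_\kappa$ we have $R_\alpha>\kappa$. \Cref{thm:main}(ii) gives $\sup_\eps\Pp_\mu(D_{M,N}\le\kappa)\to0$, so $\alpha$ is not achievable and $\aonmu(\kappa)\le\alpha_\kappa$.
\end{itemize}

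\emph{Step 3: conclusion.} Combining the two bounds gives $\aonmu(\kappa)=\alpha_\kappa$. Since $R_\alpha$ involves only Brownian motion, this value is independent of $\mu$. The two statements about densities below and above the threshold are exactly the achievability and impossibility bullets of Step 2, under the precise definition \eqref{eq:SBP-threshold-definition} of achievability (success probability tending to one along $M/N\to\alpha$).
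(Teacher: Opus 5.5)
Your Steps~2 and~3 are exactly the paper's proof. \cref{thm:main}(i) makes every density with $R_\alpha<\kappa$ achievable, \cref{thm:main}(ii) excludes every density with $R_\alpha>\kappa$, and the supremum in \eqref{eq:SBP-threshold-definition} is then pinned at the unique root. The gap is Step~1, which is the content of \cref{lem:strict-monotonicity}. It cannot simply be cited from \cref{sec:benchmark}: \cref{prop:brownian-facts} gives continuity (so that bullet is fine to import), weak monotonicity, and $\ell_\star<\infty$, but neither strict monotonicity nor the range. The mechanisms you propose instead do not work in $L^\infty$. A bounded drift such as $-c\,\mathrm{sign}(X_t)$, or any bang-bang control, confines nothing: by Girsanov the law of $B_1+\int_0^1u_t\,\dd t$ stays equivalent to $N(0,1)$, so its essential supremum is infinite. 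Mixing such a feedback in therefore provides no mechanism for a definite drop in the radius, and your bang-bang argument for $R_\alpha\to0$ fails outright. Your claim that the terminal norm is ``already infinite for a small budget'' is also false: $\mathcal R(c)<\infty$ for every $c>0$, and only $\mathcal R(0)=\infty$.

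The missing idea is convexity of $\mathcal R$, combined with the endpoint behaviour. The paper imports convexity from Fiedler--Jackson--Lacker--Niles-Weed, but it is immediate. For two controls on the same Brownian space, a convex combination of the drifts is admissible for the same combination of budgets, by Minkowski. Its terminal value is the same combination of the two terminal values. A convex nonincreasing function that is flat on some interval stays constant from then on. That is impossible here, because $\mathcal R(\cstar\sqrt T)=V_T^\infty/\sqrt T\le\ell_\star/\sqrt T\to0$, while $\mathcal R(c)>0$ for every finite $c$ by the entropy lower bound the paper cites. So the extra budget must be spent by interpolating toward a large-budget control with a strictly smaller finite radius, not toward a bounded feedback. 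The same scaling gives $R_\alpha\to0$ as $\alpha\to0$. For $\alpha\to\infty$, the paper uses the small-time asymptotics of $V_T^\infty$. An elementary alternative also works: if $|B_1+\int_0^1u_t\,\dd t|\le R$ a.s. with $\|u_t\|_{L^2}\le c$, then $\Pp(|B_1|>R+1)\le\E(\int_0^1u_t\,\dd t)^2\le c^2$, which forces $R\to\infty$ as $c\downarrow0$. With these ingredients in place of your Step~1 bullets, the rest of your argument goes through as written.
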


Writing $\ell_\star:=\lim_{T\to\infty}V_T^\infty$, we obtain
\[
\aonmu(\kappa)\sim \ell_\star^{-2}\kappa^2\qquad(\kappa\downarrow0).
\]
Thus the coefficient is the same for every law in \cref{thm:main}.
\Cref{sec:sbp} derives the asymptotics and compares the online and offline
capacities.  For Gaussian and Rademacher inputs, their ratio tends to zero
as $\kappa\downarrow0$.

Our upper bound is quantitative.  Write
\[
\mathfrak t_\mu(H):=\E_\mu[|\xi|^4\1_{\{|\xi|>H\}}].
\]

\begin{theorem}[Quantitative upper bound]
\label{thm:quantitative}
Fix $\alpha>0$, $\kappa>R_\alpha$, $K<\infty$, and $A>0$.
There is a randomized online algorithm, depending only on these
parameters, such that every centered law $\mu$ with variance one and
$\E_\mu|\xi|^4\le K$ satisfies
\begin{equation}
\Pp_\mu(D_{M,N}>\kappa)+\E_\mu(D_{M,N}-\kappa)_+
\le C_A\left[N^{-A}+\mathfrak t_\mu(N^{1/3})\right].
\label{eq:main-tail-modulus}
\end{equation}
The bound holds uniformly for all sufficiently large $N$ and all $M/N$
in a sufficiently small fixed neighborhood of $\alpha$.
The constant, the neighborhood, and the lower bound on $N$ depend only on
$(\alpha,\kappa,K,A)$.
\end{theorem}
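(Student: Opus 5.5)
We build the algorithm from a near-optimal Brownian control for $V^\infty_{1/\alpha}$ and run it row by row through a coupling. Fix $\kappa'\in(R_\alpha,\kappa)$. By the definition of $V_T^\infty$ and the Brownian scaling recalled in \cref{sec:benchmark}, there is a control with $\E u_t^2\le 2/\pi$ whose terminal value $B_T+\int_0^T u\,\dd t$ lies in $[-\kappa'/\sqrt\alpha,\kappa'/\sqrt\alpha]$ almost surely. We first regularize this control into a feedback form $u_t=\phi(t,X_t)$ that is Lipschitz, piecewise constant in time on a grid of mesh $\delta$, and satisfies $\E u_t^2\le(2/\pi)(1-\eta)$. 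The price is a slightly larger radius $\kappa''<\kappa$, and the terminal bound now holds with exponentially small failure probability in place of almost surely. This slack is what absorbs all later errors.

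Next we implement the control online. Row $i$ carries a state $X_i(k)=N^{-1/2}\sum_{j\le k}\eps_j\xi_{j,i}$, rescaled to Brownian time. When column $k$ arrives, each row requests a drift of sign $\operatorname{sgn}\phi(t_k,X_i)$ and size $|\phi|$. For Gaussian inputs, the FJLNW mechanism chooses $\eps_k$ so that $\E[\eps_k\xi_{k,i}\operatorname{sgn}\phi_i]\approx\sqrt{2/\pi}\,\E|\phi_i|/\sqrt N$ on average, since $\E|g|=\sqrt{2/\pi}$. For general $\mu$, we truncate entries at $H=N^{1/3}$ and recenter. The algorithm takes $\eps_k=\operatorname{sgn}\langle w_k,\xi_k\rangle$ with a random seed, where $w_k$ is a weight vector built from the current feedback values. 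A Berry--Esseen/Lindeberg argument for the $M$-dimensional projection $\langle w_k,\xi_k\rangle$ shows that the drift each coordinate receives matches the Gaussian one up to $O(M^{-1/2}+\text{fourth-moment error})$, uniformly over the current states. The key observation is that $\E[\xi_{k,i}\operatorname{sgn}\langle w,\xi_k\rangle]$ depends on $\mu$ only through low moments to leading order, which is where the fourth moment bound $K$ enters.

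We then compare dynamics. Each row's increments are martingale differences plus drift $N^{-1/2}\phi+o(N^{-1/2})$, with conditional variance $1/N$ and conditional fourth moments $O(K/N^2)$. Within each time block of length $\delta$ we apply a KMT/Sakhanenko-type strong coupling, or simply a martingale Skorokhod embedding, of the row's martingale part to a Brownian motion. This gives sup-norm error $N^{-1/4+o(1)}$ except on an event of probability $N^{-A}$ for a single row. Freedman's inequality with truncated increments upgrades this to the stronger tail needed for a union bound over $M\asymp N$ rows. Because $\phi$ is Lipschitz, a Gronwall argument over the grid shows that each row's terminal value is within $o(1)$ of the controlled diffusion's terminal value. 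The latter exceeds $\kappa''/\sqrt\alpha$ with exponentially small probability, so the union bound over $M$ rows costs $M\,e^{-cN^{c}}\le N^{-A}$.

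Truncation failures account for the remaining error. The event that some entry exceeds $N^{1/3}$ has probability at most $MN\cdot\mathfrak t_\mu(N^{1/3})/N^{4/3}\lesssim N^{2/3}\mathfrak t_\mu(N^{1/3})$, which is not directly of the form in \eqref{eq:main-tail-modulus}. We therefore expect to handle large entries differently: freeze them by not letting them steer the sign choice, and control their total contribution through $\E(D-\kappa)_+$. By Markov's inequality their contribution to each row sum is $\sum_j|\xi_{j,i}|\1_{\{|\xi_{j,i}|>H\}}/\sqrt N$, with expectation $\le\sqrt N\,\mathfrak t_\mu(H)/H^3=\mathfrak t_\mu(N^{1/3})/\sqrt N$, and a maximal bound over rows yields the $\mathfrak t_\mu$ term. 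The $\E(\cdot)_+$ part follows from the probability bound together with a crude $L^2$ bound on $D$ on the bad event via Cauchy--Schwarz.

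The main obstacle is the second step: showing that a single sign can simultaneously deliver the required drift to all $M$ coordinates for non-Gaussian $\mu$, uniformly in the weights, with error $o(N^{-1/2})$ per step summed over $N$ steps. If direct Lindeberg is too weak, we would first try randomizing the sign, $\Pp(\eps_k=1)=\tfrac12+\tfrac12\tanh(\lambda\langle w_k,\xi_k\rangle)$, which smooths the comparison so that only third and fourth moments enter.
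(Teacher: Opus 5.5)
\textbf{The terminal step fails.} After you regularize to a bounded Lipschitz feedback $\phi$ with $|\phi|\le C$, the controlled diffusion satisfies $|X_T|\ge|B_T|-CT$. So each row exceeds the target radius with probability at least $\Pp(|B_T|>r+CT)=:p_0>0$. This probability is fixed, not exponentially small in $N$: your regularized control does not depend on $N$ at all. Since $D_{M,N}$ is a maximum over $M\asymp N$ nearly independent rows, about $p_0M$ rows violate the constraint, and your algorithm has $\Pp(D_{M,N}\le\kappa)\to0$.

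Meeting all $\Theta(N)$ interval constraints requires idealized dynamics that confine every coordinate almost surely, with coupling errors whose tails are polynomially small uniformly over rows. This forces a drift that is unbounded near the terminal time and controlled only in $L^p$. The paper does this in two phases.
\begin{itemize}
\item Up to $T=1-h$, a bounded simple predictable control only makes the empirical $L^s$ moment of $d_a(\widetilde X_{k_0,i})$ smaller than $hc_{\mathrm{corr}}$. This control is a function of finitely many approximate Brownian observations $W$, not of the state.
\item In the final block, a fixed correction $(\Pi_{[-a,a]}(\widetilde X_{k_0})-\widetilde X_{k_0})/h_N$ stays within its budget precisely because that moment is small. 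It is combined with a F\"ollmer drift $\partial_x\log P_{h-t}f_h$ whose terminal law is supported in $[-s_p(h,c_F),s_p(h,c_F)]$, with $s_p\to0$.
\end{itemize}
Your Gronwall transfer would also break in the final block, since the Lipschitz constant of this feedback is as large as $1/(h-t)$, which is not integrable up to $h$. The paper instead uses that the feedback is nonincreasing, so each Euler step is monotone and one-Lipschitz. It propagates errors by \cref{lem:dissipative-recursion} and switches the drift off $\delta_N=N^{-1/24}$ before the end.

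\textbf{The coupling step has a second gap.} Matching each coordinate's mean drift by Berry--Esseen is not enough. At every step the requested vector must fit the empirical budget $\norm{r_k}_2/\sqrt N\le\cstar-\delta$. Checking this needs a law of large numbers across rows, hence Brownian motions that are independent across rows. Separate per-row KMT or Skorokhod embeddings of martingales all driven by the same sign give no such independence.

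The paper's key device is \cref{lem:product-innovation}. Given the sign, the truncated column has a density bounded above and below relative to $\nu_N^{\otimes M}$. Successive maximal couplings then produce $Z$ with i.i.d.\ coordinates, independent of a fair sign $S$, so that $\eps\zeta=SZ+r/\sqrt N+e+D$. Sakhanenko's approximation, applied row by row conditionally on the signs, then yields independent Brownian motions that are independent of the signs.

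Moreover, the drift error is proportional to $\norm{v}_\infty$, which your weights do not control when the feedback vector is small or concentrated. The paper adds the Rademacher perturbation $\gamma\eta_k$ and admissibility tests with a permanent auxiliary drift. The auxiliary drift also keeps the coupling valid on exceptional trajectories, which the $L^2$ envelope in your Cauchy--Schwarz step needs.

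Your truncation argument matches the paper's, provided the maximum over rows is handled by a fourth-moment Rosenthal bound summed over the $M$ rows. A first-moment Markov bound with a union bound over rows gives only $\sqrt N\,\mathfrak t_\mu(N^{1/3})$.
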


The tail term tends to zero for each fixed law, and the convergence
is uniform over classes with uniformly integrable fourth moments.
All quantitative bounds are for a fixed positive gap $\kappa-R_\alpha$.

\begin{corollary}[Bounds under higher moments]
\label{cor:higher-moment-rate}
Fix $\alpha>0$, $q>4$, $K<\infty$, and $\kappa>R_\alpha$, and let
$M/N\to\alpha$.  One randomized online algorithm satisfies
\begin{equation}
\sup_{\substack{\E_\mu\xi=0,\ \E_\mu\xi^2=1\\
                \E_\mu|\xi|^q\le K}}
\left\{\Pp_\mu(D_{M,N}>\kappa)+\E_\mu(D_{M,N}-\kappa)_+\right\}
=O\!\left(N^{-(q-4)/2}\right).
\label{eq:higher-moment-rate}
\end{equation}
\end{corollary}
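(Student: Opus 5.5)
This follows from \cref{thm:quantitative} by bounding the truncated fourth-moment tail $\mathfrak t_\mu(N^{1/3})$ uniformly over the class $\{\E_\mu|\xi|^q\le K\}$. The class has uniformly bounded fourth moments, since $\E_\mu\xi^4\le(\E_\mu|\xi|^q)^{4/q}\le K^{4/q}=:K'$ by Jensen (or Lyapunov). We therefore apply \cref{thm:quantitative} with parameters $(\alpha,\kappa,K',A)$, where $A$ is fixed with $A\ge(q-4)/2$, say $A=q$. This gives a single randomized online algorithm, depending only on $(\alpha,\kappa,K',A)$ and hence only on $(\alpha,\kappa,K,q)$. Since $M/N\to\alpha$, eventually $M/N$ lies in the neighborhood of $\alpha$ provided by that theorem and $N$ exceeds its lower bound. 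From then on, for every admissible $\mu$,
\[
\Pp_\mu(D_{M,N}>\kappa)+\E_\mu(D_{M,N}-\kappa)_+\le C_A\bigl[N^{-A}+\mathfrak t_\mu(N^{1/3})\bigr],
\]
with $C_A$ independent of $\mu$.

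The remaining step is a Markov-type tail estimate. On $\{|\xi|>H\}$ we have $|\xi|^{4}=|\xi|^q|\xi|^{4-q}\le H^{4-q}|\xi|^q$, because $q>4$. Hence
\[
\mathfrak t_\mu(H)\le H^{-(q-4)}\,\E_\mu|\xi|^q\le K H^{-(q-4)}.
\]
Taking $H=N^{1/3}$ gives $\mathfrak t_\mu(N^{1/3})\le K N^{-(q-4)/3}$.

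This exponent is $(q-4)/3$, which is weaker than the claimed $(q-4)/2$. The obstacle is therefore that the truncation level $N^{1/3}$ appearing in \eqref{eq:main-tail-modulus} is too low to give the stated rate directly. To reach $(q-4)/2$ one needs truncation at level $\sqrt N$, since then $K H^{-(q-4)}=KN^{-(q-4)/2}$. The first thing to try is to check whether the proof of \cref{thm:quantitative} actually yields the modulus with a general truncation level $H$, for instance a bound of the form $C_A[N^{-A}+\mathfrak t_\mu(H)+\text{(error in }H/\sqrt N)]$. If so, one would choose $H=c\sqrt N$ with $c$ small and absorb the remaining error into the gap $\kappa-R_\alpha$.

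If the theorem is available only as stated, I would use a different route. The claimed probability bound is consistent with a union bound over the $NM$ entries: under a $q$-th moment,
\[
\Pp\bigl(\max_{k,i}|\xi_{k,i}|>\eta\sqrt N\bigr)\le NM\,K\,(\eta\sqrt N)^{-q}=O\bigl(N^{2-q/2}\bigr)=O\bigl(N^{-(q-4)/2}\bigr).
\]
So I would rerun the truncation argument with threshold $\eta\sqrt N$. Entries above the threshold are handled by this union bound. For entries below it, the truncated and recentred entries have mean $O(N^{-(q-1)/2})$, and their variance differs from one by at most $\E\xi^2\1_{|\xi|>\eta\sqrt N}=O(N^{-(q-2)/2})$; both perturbations are negligible against the positive gap. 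Applying the coupling in the proof of \cref{thm:quantitative} to these bounded entries then gives an $N^{-A}$ error.

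For the expectation term $\E_\mu(D_{M,N}-\kappa)_+$, bound it by $\E[D_{M,N}\1_{\{D_{M,N}>\kappa\}}]$ and apply Cauchy--Schwarz. This uses the crude moment bound $\E D_{M,N}^2\le\E\sum_i(N^{-1/2}\sum_k\eps_k\xi_{k,i})^2$, which is polynomial in $N$. Combined with the probability bound, this requires some care so that the exponent is not lost; the expectation term is the delicate point.
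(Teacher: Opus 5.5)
There is a genuine gap. Your reduction to \cref{thm:quantitative} with the fourth-moment bound $K^{4/q}$ is fine, and you correctly find that inserting $\mathfrak t_\mu(N^{1/3})\le KN^{-(q-4)/3}$ only gives the exponent $(q-4)/3$. Both of your remedies then raise the truncation level, and that step fails.

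You assert that the coupling of \cref{sec:coupling}, applied to entries truncated at $\eta\sqrt N$, ``gives an $N^{-A}$ error''. This is unsupported. The arbitrary polynomial rate in \cref{prop:truncated-polynomial} rests on $\E|\zeta|^s\le KN^{(s-4)/3}$ for arbitrarily large $s$. That bound makes the martingale and Sakhanenko errors smaller than $\eps_0\delta_N$, with $\delta_N=N^{-1/24}$, off an event of probability $O(N^{-\theta})$ for every $\theta$. This precision is forced by the Lipschitz constant $\delta_N^{-1}$ of the F\"ollmer feedback before the cutoff. At threshold $\eta\sqrt N$, a single admitted entry of size $\delta_N\sqrt N=N^{11/24}$ puts a jump of size $\delta_N$ into the algorithm's state, which breaks the comparison at scale $\eps_0\delta_N$. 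By your own union bound, for suitable laws in the class such an entry occurs with probability of order $\min\{1,N^{2-11q/24}\}$. This is not $O(N^{-A})$, and it even exceeds the target $N^{-(q-4)/2}$. The same obstruction defeats the $H=c\sqrt N$ variant. Your treatment of the expected excess is also unresolved: Cauchy--Schwarz against $\Pp(D_{M,N}>\kappa)^{1/2}$ at least halves the exponent.

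The missing idea is to keep $H_N=N^{1/3}$. Then large entries never enter the algorithm's state; they affect $D_{M,N}$ only through the pathwise bound $|D_{M,N}-D^{\mathrm{tr}}_{M,N}|\le T_N$, which needs to be small only at the fixed gap $g=\kappa-\kappa_0$. The modulus $\mathfrak t_\mu(N^{1/3})$ in \cref{thm:quantitative} is merely the output of the fourth-moment Rosenthal bound on $R_N^{\mathrm{tail}}$. Under a $q$th moment, redo that one step with exponent $q$:
\begin{align*}
\E(R_N^{\mathrm{tail}})^q
&\le C_q\Bigl[M\bigl(\E(W^{\mathrm{tail}}_{1,1})^2\bigr)^{q/2}+MN^{1-q/2}\,\E(W^{\mathrm{tail}}_{1,1})^q\Bigr]\\
&\le C_{q,K}\bigl[N^{1-q(q-2)/6}+N^{2-q/2}\bigr]\le C_{q,K}N^{-(q-4)/2}.
\end{align*}
This uses $\E(W^{\mathrm{tail}}_{1,1})^2\le KN^{-(q-2)/3}$, $\E(W^{\mathrm{tail}}_{1,1})^q\le K$, and $(q-2)(q-3)>0$. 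The jump term is exactly your union-bound heuristic, applied to the unsigned tail sums rather than to the algorithm's input.

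Then split pathwise:
\[
\Pp(D_{M,N}>\kappa)\le\Pp(D^{\mathrm{tr}}_{M,N}>\kappa_0)+\Pp(T_N>g),\qquad
(D_{M,N}-\kappa)_+\le(D^{\mathrm{tr}}_{M,N}-\kappa_0)_++(T_N-g)_+.
\]
The truncated terms are $O(N^{-A})$ with $A>(q-4)/2$ by \cref{prop:truncated-polynomial}. There the Cauchy--Schwarz loss is harmless because the failure exponent is arbitrary. Once $KN^{-1/2}\le g/2$, one has $\Pp(T_N>g)\le(2/g)^q\E(R_N^{\mathrm{tail}})^q$ and $(T_N-g)_+\le(2/g)^{q-1}(R_N^{\mathrm{tail}})^q$. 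So the expected excess costs no more than the probability bound.
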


This rate follows by applying Rosenthal's inequality with exponent $q$
to the discarded entries.  For independent entries with different
distributions, \cref{thm:independent-array} proves the same limit under
uniformly integrable fourth moments.  Its lower bound holds under a
common bound on fourth moments.

For fixed precomputed control data, the algorithms in
\cref{thm:quantitative,cor:higher-moment-rate} use $O(M)$ arithmetic operations
and scalar feedback evaluations per arrival, with $O(M)$ working memory.
\Cref{sec:online-algorithm} specifies this evaluation model.

The Gaussian value and the expected lower bound are due to
\cite{FiedlerJacksonLackerNilesWeed26}.  \Cref{sec:lower} also deduces
part (ii) and Gaussian feasibility from their results.

\subsection{Overview of the proof}
\label{sec:overview}

Extending the Gaussian construction requires estimates for both the
joint distribution of the signed vector and the largest terminal
coordinate.  The chosen sign depends on the entire incoming vector and
couples its coordinates.  A central limit theorem identifies the
approximate drift; following the controlled Brownian paths also requires
a coupling of the signed vector.  Its errors must remain small in all
$M=\Theta(N)$ coordinates so that every terminal constraint is satisfied.

\paragraph{Coupling the signed vector.}
Suppose the desired nonzero drift $r$ has been computed from the past.
For the truncated vector $\zeta$, choose $\eps$ with conditional mean
$\psi_\rho(\langle v,\zeta\rangle)$, where
$v=r/\|r\|_2$, $\rho=\|r\|_2/\sqrt N$, and $\psi_\rho$ is a smooth odd
function calibrated so that $\E[G\psi_\rho(G)]=\rho$ for
$G\sim N(0,1)$.  For sufficiently small
$\|v\|_\infty$, normal approximation gives
$\E(\eps\zeta_i\mid\text{past})=r_i/\sqrt N+e_i$ with a small error.
A small independent Rademacher perturbation gives the needed bound on
$\|v\|_\infty$ with high probability, subject to drift bounds checked by
the algorithm.

Conditionally on $\eps$, the unsigned vector $\zeta$ has a bounded
density relative to a product measure, including for nonsymmetric inputs.
Successive maximal couplings of the coordinates produce a vector $Z$
with independent coordinates, independent of $\eps$.  Coupling the slightly biased
sign $\eps$ to a fair sign $S$ then gives
\[
\eps\zeta=SZ+\frac r{\sqrt N}+e+D,
\]
where $D$ is centered conditionally on the past and has small coordinate
variances.  Conditionally on every past, $S$ is a fair Rademacher sign,
$Z\sim\nu_N^{\otimes M}$, and $S$ and $Z$ are independent; this conditional
law does not depend on the past.  The coordinates of $SZ$ are independent
\emph{conditionally on $S$}.
The algorithm computes $\eps$ from the observed vector and an independent
uniform random variable; $(S,Z)$ is constructed on an enlarged probability
space for the proof (\cref{lem:product-innovation}).

The fixed conditional law makes the pairs $(S_k,Z_k)$ independent across
arrivals.  Iterating the decomposition gives the prescribed drift, a
predictable error, and a martingale, in addition to the independent
increments $S_kZ_k$.  We bound the accumulated errors in the maximum norm,
estimating the martingale in the filtration generated by the construction.
We then condition on the auxiliary signs and apply a strong Gaussian
approximation to each row.

\paragraph{Truncation and the fourth moment.}
The algorithm computes its sign using the truncated entry
$\zeta=\xi\1_{\{|\xi|\le N^{1/3}\}}$ and applies that sign to $\xi$.
The coupling estimates include the mean and variance errors introduced
by truncation.
The truncated entries have moments of every order.  Tracking their growth
with $N$, for every fixed $A>0$ we can choose parameters so that the
algorithm on truncated inputs has failure probability $O(N^{-A})$.

Let $D^{\mathrm{tr}}_{M,N}$ denote the discrepancy of the truncated signed
sum.  For every choice of signs, the discarded entries satisfy
\[
|D_{M,N}-D^{\mathrm{tr}}_{M,N}|
\le \max_{i\le M}\frac1{\sqrt N}\sum_{k=1}^N
|\xi_{k,i}|\1_{\{|\xi_{k,i}|>N^{1/3}\}}.
\]
A fourth-moment estimate for the centered row sums, together with a fixed
gap between the target for the truncated process and $\kappa$, proves
\eqref{eq:main-tail-modulus} (\cref{sec:tail-transfer}).

\paragraph{The online algorithm.}
Fix a smaller target $a\in(R_\alpha,\kappa)$ and reserve a short final
correction block.  Before the split, the algorithm follows a bounded simple
predictable approximation of a Brownian control.  Its coefficients depend on
finitely many stored observations, and subtracting the prescribed drift from
the truncated signed sum supplies approximations to the Brownian observations
used by this control.  At the split time, the distances to $[-a,a]$ have a
small empirical moment.

During the final block, a correction drift determined by the split-time state
is held fixed, while a F\"ollmer drift confines the new fluctuation to a small
interval.  The feedback is dissipative, so the Euler scheme is stable under
the accumulated coupling errors.  At each arrival the drift is computed
before the vector is seen, perturbed by a small random vector, and checked
against fixed bounds.  If a check fails, a bounded auxiliary drift is used
thereafter.  \Cref{alg:online} specifies every update.  We prove that all
checks pass with high probability.  The auxiliary drift keeps the coupling
estimates valid on exceptional trajectories, allowing us to bound the
expected loss.

The correction and the F\"ollmer drift follow
\cite[Section~4.1]{FiedlerJacksonLackerNilesWeed26} and
\cite[Section~4 and Appendix~C]{JoLee26}.
The Gaussian approximation is carried out separately before and after the
split so that the Brownian increments used in the final block are independent
of the pre-split history.

\paragraph{Lower bounds.}
The compactness argument of \cite{FiedlerJacksonLackerNilesWeed26}
retains the random empirical measure of the drift and martingale paths
of the coordinates.  Almost every limiting measure is an admissible Brownian
control law.  Measures supported at terminal time in $[-\kappa,\kappa]$
form a closed set disjoint from the admissible limits when
$\kappa<R_\alpha$.  Applying the Portmanteau theorem to the law of the
random empirical measure gives vanishing probability of simultaneous
feasibility.
\Cref{sec:lower} gives the argument; \cref{sec:independent-array} proves
the additional estimates needed when the entry distributions differ.

\subsection{Related work}
\label{sec:prior-work}

The stochastic online discrepancy literature includes the Rademacher
result of Bansal and Spencer \cite{BansalSpencer20} and bounds for more
general arrival distributions
\cite{BansalJiangSinglaSinha20,BansalEtAl21}.
Wengiel \cite{Wengiel26} gives a polynomial-time algorithm with unnormalized terminal
discrepancy $O(\sigma^8\sqrt M)$ for independent symmetric sub-Gaussian
coordinates, uniformly over the horizon, when $M$ is sufficiently large
relative to the sub-Gaussian norm bound $\sigma$.
That result addresses arbitrary prescribed horizons and efficient
computation; our result identifies the exact constant in the proportional
regime for distributions with a finite fourth moment.

For vectors chosen by an oblivious adversary, random walks control every
signed prefix
\cite{AlweissLiuSawhney21,LiuSahSawhney22,KulkarniReisRothvoss24,AdenAli26}.
Other models include sparse binary columns \cite{AltschulerTikhomirov25},
online Beck--Fiala \cite{AltschulerTikhomirov26}, and vectors with small
coordinates \cite{Hmadi26}.  Continuous methods also describe adversarial
batched balancing \cite{GuillenKobzar26} and confinement when steps may
be discarded \cite{SmirnovVershynin26}.

Discrepancy of random matrices and offline perceptron capacity have been
studied in several regimes
\cite{TurnerMekaRigollet20,AltschulerNilesWeed22}.
The exact Gaussian and binary SBP capacities are known
\cite{AubinPerkinsZdeborova19,PerkinsXu21,AbbeLiSly21}; we recall them in
\cref{sec:sbp}.  Typical freezing is compatible with rare well-connected
clusters \cite{AbbeLiSly22}.
Nakajima and Sun \cite{NakajimaSun23} prove universality of the free energy
and a sharp threshold sequence for sub-Gaussian disorder.
The restrictions on stable algorithms studied in
\cite{GamarnikKizildagPerkinsXu22,GamarnikKizildagPerkinsXu23} differ
from irrevocable online decisions.

Huang, Sellke, and Sun \cite{HuangSellkeSun26} use stochastic control to
characterize the empirical distributions of projections attainable by
algorithms on Gaussian matrices with a dimension-independent Lipschitz
constant.
Their Section~6.7 announces a companion treatment of exact feasibility.
The relevant distinction here is the information available online:
our lower bound covers every online algorithm, and our upper
bound applies to non-Gaussian product distributions.

The companion paper \cite{JoLee26} determines the Gaussian online
threshold for the asymmetric binary perceptron.  Its constraint is
one-sided.  We use its formulation of simple predictable controls and
recall its F\"ollmer and Euler estimates, with precise references in
\cref{sec:controls-slack,sec:confinement,app:Follmer}.
For the interval constraint here, conditional projection decreases the
moment of the distance to the target interval, by convexity.

\paragraph{Organization.}
\Cref{sec:benchmark} recalls the Brownian control problem.
\Cref{sec:coupling} proves the coupling estimates, and
\cref{sec:algorithm} constructs and analyzes the online algorithm.
\Cref{sec:lower} proves the lower bounds.
\Cref{sec:extensions} treats higher moments and independent entries with
different distributions, and \cref{sec:sbp} gives the perceptron threshold.
The appendices contain Brownian control projection and approximation,
measurable coupling kernels, F\"ollmer and Euler estimates, and quantitative
estimates.

\section{The Brownian control problem}
\label{sec:benchmark}

\subsection{Online algorithms and normalization}
\label{sec:model}

Throughout the paper, $M=M_N$ satisfies $M/N\to\alpha\in(0,\infty)$.  Let
$\xi_k=(\xi_{k,1},\ldots,\xi_{k,M})$, $1\le k\le N$, be independent random
vectors whose coordinates are i.i.d.\ with common law $\mu$.
A randomized online algorithm carries an external seed $\mathsf U$,
independent of the input, and chooses
\[
\eps_k=\mathsf A_{k,M,N}(\mathsf U,\xi_1,\ldots,\xi_k)\in\{-1,+1\},
\qquad 1\le k\le N.
\]
Its normalized state is
\begin{equation}
X_k:=\frac1{\sqrt N}\sum_{j=1}^k\eps_j\xi_j,
\qquad 0\le k\le N,
\qquad
D_{M,N}:=\norm{X_N}_\infty,
\label{eq:partial-sums}
\end{equation}
and its optimal expected discrepancy is
$\inf_{\eps\,\mathrm{online}}\E_\mu D_{M,N}$.
All probabilities and expectations include the algorithm's independent
randomness unless we explicitly condition on it.

For $q\ge4$ and $K<\infty$, write $\cM_q(K)$ for the class of laws $\mu$ on
$\R$ satisfying
\begin{equation}
\E_\mu\xi=0,
\qquad \E_\mu\xi^2=1,
\qquad \E_\mu|\xi|^q\le K.
\label{eq:moment-class}
\end{equation}

\subsection{The Brownian control value}
\label{sec:brownian-value}

Let $B$ be a standard Brownian motion with completed natural filtration
$\cF^B=(\cF^B_t)_{t\ge0}$.  For $T>0$, \cite{FiedlerJacksonLackerNilesWeed26} defines
\begin{equation}
V_T^\infty
:=
\inf\left\{
\left\|B_T+\int_0^T u_t\,\dd t\right\|_{L^\infty}:
u\ \cF^B\text{-progressive},\ \
\norm{u_t}_{L^2}\le\sqrt{\tfrac2\pi}
\ \text{for a.e.\ }t
\right\},
\label{eq:VT-definition}
\end{equation}
where $\norm{Y}_{L^\infty}$ is the essential supremum of $|Y|$.
For $c\ge0$, we also use the value at time one with drift bound $c$:
\begin{equation}
\mathcal R(c):=\inf\left\{
\left\|B_1+\int_0^1u_t\,\dd t\right\|_{L^\infty}:
u\ \cF^B\text{-progressive},\ \
\norm{u_t}_{L^2}\le c\ \text{a.e.}
\right\}.
\label{eq:R-of-c}
\end{equation}
Set
\begin{equation}
\cstar:=\E|G|=\sqrt{\frac2\pi},\qquad G\sim N(0,1),
\qquad
\ca:=\frac{\cstar}{\sqrt\alpha}=\sqrt{\frac{2}{\pi\alpha}}.
\label{eq:steering-constants}
\end{equation}
For $c>0$, Brownian scaling gives
\begin{equation}
\mathcal R(c)=\frac{\cstar}{c}V^\infty_{(c/\cstar)^2},
\qquad\text{and in particular}\qquad
R_\alpha=\sqrt\alpha\,V^\infty_{1/\alpha}=\mathcal R(\ca).
\label{eq:Ralpha-cap}
\end{equation}
At $c=0$, the only admissible drift is zero $\dd t\otimes\Pp$-almost
everywhere, and $\mathcal R(0)=\norm{B_1}_{L^\infty}=\infty$.

We recall the following properties of the Brownian value.

\begin{proposition}
\label{prop:brownian-facts}
The infimum in \eqref{eq:R-of-c} is attained.  The map
$c\mapsto\mathcal R(c)$ is continuous and nonincreasing on $(0,\infty)$, the
map $T\mapsto V_T^\infty$ is nondecreasing, and
$\ell_\star:=\lim_{T\to\infty}V_T^\infty$ is finite.
\end{proposition}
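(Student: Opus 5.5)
We prove the four assertions in turn: attainment by weak compactness of the drift, then monotonicity and continuity by time change and scaling, and finally finiteness of $\ell_\star$ by an explicit control.

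\emph{Attainment.} We would use weak compactness of the admissible drifts. The set $\mathcal U_c$ of $\cF^B$-progressive $u$ with $\norm{u_t}_{L^2}\le c$ a.e.\ is convex. It is also bounded in $L^2(\dd t\otimes\Pp)$ on $[0,1]\times\Omega$, and it is closed there. Closedness holds because the progressive subspace is closed, and the pointwise constraint $\int_I\E u_t^2\dd t\le c^2|I|$ for every interval $I$ is preserved under strong limits. Hence $\mathcal U_c$ is weakly compact.

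The map $u\mapsto Y^u:=B_1+\int_0^1u_t\dd t$ is affine and continuous from $L^2(\dd t\otimes\Pp)$ to $L^2(\Pp)$, so it is weak-to-weak continuous. The functional $Y\mapsto\norm{Y}_{L^\infty}$ is weakly lower semicontinuous on $L^2$. To see this, note that $\{\norm{Y}_{L^\infty}\le r\}$ is convex and strongly closed, hence weakly closed. Take a minimizing sequence $u^n$. Assume $\mathcal R(c)<\infty$; otherwise any admissible control attains the value. Extract a weakly convergent subsequence with limit $u$. Then $u$ is admissible and $\norm{Y^u}_{L^\infty}\le\liminf\norm{Y^{u^n}}_{L^\infty}=\mathcal R(c)$.

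\emph{Monotonicity and continuity.} $\mathcal R$ is nonincreasing because $\mathcal U_c\subset\mathcal U_{c'}$ for $c\le c'$. Monotonicity of $T\mapsto V_T^\infty$ follows from time-shifting a control. Given $T<T'$ and an admissible $u$ on $[0,T]$ for $B$, we want an admissible control on $[0,T']$. We would run the control $u$ on the Brownian motion $\tilde B_t:=B_{T'-T+t}-B_{T'-T}$, which is adapted to the larger filtration, and use zero drift on $[0,T'-T]$. This leaves an extra independent summand $B_{T'-T}$, which is the wrong direction. So instead we use \eqref{eq:Ralpha-cap}: $V_T^\infty=\sqrt T\,\mathcal R(\cstar\sqrt T)$.

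We then prove directly that $c\mapsto c^{-1}\mathcal R(c)$, equivalently $\sqrt T\mathcal R(\cstar\sqrt T)$, is nondecreasing in $T$. The idea is to compare horizon $T$ with $T'$ via restriction. Given an optimal control on $[0,T']$, condition on $\cF^B_{T'-T}$. For a.e.\ realization of the first segment, the remaining problem is a horizon-$T$ problem started at $x=B_{T'-T}+\int_0^{T'-T}u$ with a (random) drift budget. The $L^\infty$ bound over $[-r,r]$ holds for a.e.\ conditional law. Comparing with the started-at-zero problem requires a symmetrization argument: averaging the conditional controls for $x$ and $-x$ with reflected Brownian motion gives, by convexity of $\norm{\cdot}_{L^\infty}$, a control started at $0$ of no larger value. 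The averaged budget satisfies $\E u_t^2\le 2/\pi$ by Jensen's inequality. I expect this conditioning and measurable-selection step to be the main obstacle, and I would defer the measurability to the projection lemma in the appendices.

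Continuity on $(0,\infty)$: right-continuity follows from monotonicity together with a scaling upper bound. Multiplying an optimal control for budget $c$ by $\lambda>1$ is not itself useful, so instead we use time-rescaling. From \eqref{eq:Ralpha-cap}, $\mathcal R(c)=\frac{\cstar}{c}V^\infty_{(c/\cstar)^2}$, so continuity of $\mathcal R$ reduces to continuity of $T\mapsto V_T^\infty$. Monotonicity of $V$ gives one-sided inequalities. For the reverse, we use that $\mathcal R$ is nonincreasing while $c\mapsto c^{-1}\mathcal R(c)\cdot c$ with $V$ nondecreasing. Since $\mathcal R$ is nonincreasing and $c\mapsto c\,\mathcal R(c)=\cstar V$ is nondecreasing, any jump of one is incompatible with the other. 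If $\mathcal R(c-)>\mathcal R(c+)$, then $c\mathcal R$ would jump down, which contradicts the monotonicity of $c\mathcal R$. This yields continuity.

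\emph{Finiteness of $\ell_\star$.} We exhibit a bounded-drift feedback. A strong drift of the form $u_t=-\beta\,\mathrm{sign}(Y_t)$ with $\beta=\cstar$ keeps $|Y|$ of order one in distribution, but not in $L^\infty$. To get almost-sure confinement we would use, on a final window $[T-1,T]$, a F\"ollmer-type bridge drift $-Y_t/(T-t)$ truncated to an interval. The budget is verified in $L^2$ on the earlier stationary phase, which supplies a uniformly bounded $\E Y_{T-1}^2$. This gives $V_T^\infty\le C$ uniformly in $T$, and the limit exists by monotonicity.
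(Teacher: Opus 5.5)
Your arguments for attainment, monotonicity and continuity are correct, and they are more self-contained than the paper's. The paper imports these facts from \cite[Lemma~2.5, Proposition~2.8]{FiedlerJacksonLackerNilesWeed26}: it obtains an optimizer in a possibly larger filtration and then projects it with \cref{lem:Brownian-projection}.

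\begin{itemize}
\item \emph{Attainment.} Your weak-compactness argument in $L^2(\dd t\otimes\Pp)$ works directly in $\cF^B$, so no projection is needed.
\item \emph{Continuity.} A nonincreasing $\mathcal R$ with $c\,\mathcal R(c)=\cstar V^\infty_{(c/\cstar)^2}$ nondecreasing cannot jump. This replaces the convexity of $\mathcal R$. Note that you once wrote $c^{-1}\mathcal R(c)$ where you mean $c\,\mathcal R(c)$.
\item \emph{Monotonicity of $V_T^\infty$.} Your symmetrization works and needs no measurable selection. Put $\widetilde B_s=B_{T'-T+s}-B_{T'-T}$ and $\widetilde u_s=u_{T'-T+s}$, and let $\Phi$ be the measure-preserving reflection of the increments after time $T'-T$. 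Set $w=\tfrac12(\widetilde u-\widetilde u\circ\Phi)$. It is progressive for $(\cF^B_{T'-T+s})_s$, in which $\widetilde B$ is Brownian. With $Y=B_{T'}+\int_0^{T'}u_t\,\dd t$, one has $\widetilde B_T+\int_0^Tw_s\,\dd s=\tfrac12(Y-Y\circ\Phi)\in[-r,r]$, and $\E w_s^2\le\E u_{T'-T+s}^2$. \Cref{lem:Brownian-projection} then finishes the argument.
\end{itemize}

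The genuine gap is the finiteness of $\ell_\star$. Your continuity argument also relies on it, since it needs $\mathcal R<\infty$. Your instinct to use a F\"ollmer-type drift is right, but targeting a point fails, because the budget is pointwise in time: $\E u_t^2\le2/\pi$ for a.e.\ $t$.

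\begin{itemize}
\item The bridge drift $-Y_t/(T-t)$ blows up. On $[T-1,T]$, write $s=t-T+1$ and condition on $Y_{T-1}=y_0$; then $\E[Y_t^2/(T-t)^2]=y_0^2+s/(1-s)\to\infty$.
\item It is over budget already at $s=0$ for large $T$. Under the stationary law $\propto e^{-2\cstar|y|}$ one has $\E Y^2=\pi/4>2/\pi$. The budget check you do on the stationary phase is the trivial one; the binding check is on the final window.
\item Capping the drift at a finite level does not help. A bounded drift gives, by Girsanov, a terminal law equivalent to a Gaussian, so almost-sure confinement is lost.
\end{itemize}

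What you need is a confining drift whose $L^2$ norm is bounded \emph{uniformly up to} the terminal time. As in \cref{sec:confinement}, take a final window of length $h$ and combine two drifts:
\begin{itemize}
\item the constant correction $(\Pi_{[-a,a]}(Y_{T-h})-Y_{T-h})/h$;
\item the F\"ollmer drift $\partial_x\log P_{h-t}f(Z_t)$ for the fresh fluctuation $Z$. Here $f\Gamma_h$ is supported on $[-b,b]$, vanishes at the endpoints, and has finite relative Fisher information, for instance the normalized tent power of \cref{app:tent}.
\end{itemize}
By \eqref{eq:score-martingale}, the F\"ollmer drift along $Z$ is a martingale closing at $s_f(Z_h)$. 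Hence its $L^2$ norm is at most $\norm{s_f}_{L^2(f\Gamma_h)}$ for every $t<h$, and this is small for large $b$. The stationary phase gives uniform exponential moments, which make $\norm{d_a(Y_{T-h})}_{L^2}/h$ small uniformly in $T$ for large $a$. The terminal value $\Pi_{[-a,a]}(Y_{T-h})+Z_h$ then lies in $[-a-b,a+b]$, so $V_T^\infty\le a+b$ for all large $T$. The paper itself simply cites \cite[Section~6]{FiedlerJacksonLackerNilesWeed26} for this step.
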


\begin{proof}
These follow from \cite[Lemma~2.5 and
Proposition~2.8]{FiedlerJacksonLackerNilesWeed26} after the Brownian scaling
\eqref{eq:Ralpha-cap}.  The compactness argument first gives an optimal
control adapted to a filtration in which $B$ is Brownian, possibly larger
than its natural filtration.  Projection onto the natural Brownian
filtration (\cref{lem:Brownian-projection}) preserves the terminal bound
and the pointwise $L^2$ constraint.  The projected control can then be
represented on the fixed canonical Wiener space.
Finiteness of $\ell_\star$ is
\cite[Section~6]{FiedlerJacksonLackerNilesWeed26}.
\end{proof}

\section{A coupling for non-Gaussian signings}
\label{sec:coupling}

The Gaussian upper bound of \cite{FiedlerJacksonLackerNilesWeed26}
represents the signed vector using independent Gaussian increments.
For the analysis, we couple the incoming vector and its chosen sign to
a vector with independent coordinates and an auxiliary fair sign.
Truncation allows this construction for every law in $\cM_4(K)$.

Throughout the upper construction, assume $\mu\in\cM_4(K)$ and put
\begin{equation}
H_N=N^{1/3},\qquad
\zeta=\xi\1_{\{|\xi|\le H_N\}},\qquad
\nu_N=\Law(\zeta),\qquad
m_N=\E\zeta,\quad \sigma_N^2=\Var(\zeta).
\label{eq:truncation-definition}
\end{equation}
The algorithm updates its state using $\zeta$.  The moments $m_N$ and
$\sigma_N$ enter only the analysis.
Uniformly over $\cM_4(K)$,
\begin{equation}
|m_N|\le K N^{-1},\qquad
|\sigma_N^2-1|\le K N^{-2/3}+K^2N^{-2},\qquad
\E|\zeta|^{s}\le K N^{(s-4)/3}\quad(s>4).
\label{eq:truncation-moments}
\end{equation}
Indeed, centering the original law bounds $|m_N|$ by
$\E(|\xi|;|\xi|>H_N)$, and the lost second moment is at most
$KH_N^{-2}$.  Also $\E|\zeta|^j\le 1+K$ for $j\le4$.
We reserve $q$ for a moment assumption on the original entries.  The
auxiliary exponent $s>4$ is used both for moments of truncated entries
and for empirical moments of the drift.  It can be chosen as large as
needed without strengthening the assumption on $\mu$.

\subsection{Realizing a prescribed mean drift}
\label{sec:one-column}

Fix a gap below the maximum drift magnitude,
\begin{equation}
0<\delta<\cstar.
\label{eq:steering-slack}
\end{equation}
Choose $L=L_\delta<\infty$ so that
\begin{equation}
m_\delta:=\E[G\tanh(LG)]>\cstar-\frac\delta2.
\label{eq:mdelta}
\end{equation}
For $0\le\rho\le\cstar-\delta$, define
\begin{equation}
\psi_\rho(x):=\frac{\rho}{m_\delta}\tanh(Lx).
\label{eq:psi-definition}
\end{equation}
Then $\psi_\rho$ is odd and
\begin{equation}
\norm{\psi_\rho}_\infty\le
a_\delta:=\frac{\cstar-\delta}{\cstar-\delta/2}<1,
\qquad
\E[G\psi_\rho(G)]=\rho,
\label{eq:psi-calibration}
\end{equation}
and, for each fixed integer $j\ge1$,
\begin{equation}
\norm{\psi_\rho^{(j)}}_\infty\le C_{\delta,j}\rho.
\label{eq:psi-derivatives}
\end{equation}

\begin{lemma}
\label{lem:low-influence-response}
Let $\zeta_1,\ldots,\zeta_M$ be independent copies of a law $\nu$ with
mean $m$, variance $\sigma^2\in[1/2,2]$, and bounded third absolute moment.
Write $\zeta=(\zeta_1,\ldots,\zeta_M)$, suppose that
$\norm v_2=1$ and $\norm v_\infty\le1/2$, and put $d=m\sum_jv_j$.
For $0\le\rho\le\cstar-\delta$,
\begin{align}
\E[\zeta_i\psi_\rho(\langle v,\zeta\rangle)]&=\rho v_i+e_i,\nonumber\\
|e_i|&\le C\rho\left[
 |v_i|(\norm v_\infty+|\sigma^2-1|+|d|)+|m|\right].
\label{eq:response-bound}
\end{align}
Moreover,
\begin{equation}
t:=\E\psi_\rho(\langle v,\zeta\rangle),\qquad
|t|\le C\rho\left(\sum_j|v_j|^3+|d|\right)
\le C\rho(\norm v_\infty+|d|).
\label{eq:sign-bias}
\end{equation}
Constants depend only on $\delta$ and the third-moment bound.
\end{lemma}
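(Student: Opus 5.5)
We prove both estimates by Gaussian integration by parts in a leave-one-out form, together with a smooth (Lindeberg-type) comparison of the remaining linear form with a Gaussian. Write $W=\langle v,\zeta\rangle$ and $W^{(i)}=W-v_i\zeta_i$; $W^{(i)}$ is independent of $\zeta_i$. Expanding $\psi_\rho$ to second order around $W^{(i)}$ gives
\[
\E[\zeta_i\psi_\rho(W)]
=m\,\E\psi_\rho(W^{(i)})+v_i\,\E[\zeta_i^2]\,\E\psi_\rho'(W^{(i)})+O\bigl(\rho\,|v_i|^2\,\E|\zeta_i|^3\bigr).
\]
The remainder uses \eqref{eq:psi-derivatives} with $j=2$. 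It is at most $C\rho|v_i|\norm v_\infty$, which is admissible.

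For the leading term, note $\E\zeta_i^2=\sigma^2+m^2$. We will compare $\E\psi_\rho'(W^{(i)})$ with $\E\psi_\rho'(G)=\E[G\psi_\rho(G)]=\rho$, using Stein's identity and the calibration \eqref{eq:psi-calibration}. The variable $W^{(i)}$ has mean $d-mv_i$ and variance $\sigma^2(1-v_i^2)$. Since $\norm v_\infty\le1/2$, this variance is bounded below. We then use a three-term comparison.
\begin{enumerate}[label=(\alph*)]
\item A Lindeberg replacement of each $\zeta_j$ by a Gaussian with the same mean and variance. The derivatives of $\psi_\rho'$ are bounded by $C\rho$, so this costs $C\rho\sum_j|v_j|^3\le C\rho\norm v_\infty$.
\item Replacing the resulting Gaussian $N(d-mv_i,\sigma^2(1-v_i^2))$ by $N(0,1)$. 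This costs $C\rho(|d|+|m||v_i|+|\sigma^2-1|+v_i^2)$, again by the derivative bounds.
\item Collecting terms. This gives $v_i(\sigma^2+m^2)\bigl(\rho+O(\rho(\norm v_\infty+|\sigma^2-1|+|d|+|m|))\bigr)=\rho v_i+O(\dots)$. The pieces $|m||v_i|$ and $m^2|v_i|$ are absorbed into $|m|$, because $|m|\le\sigma\le\sqrt2$ and $|v_i|\le1$.
\end{enumerate}
The term $m\,\E\psi_\rho(W^{(i)})$ is bounded by $|m|\norm{\psi_\rho}_\infty\le C\rho|m|$, since $\norm{\psi_\rho}_\infty\le\rho/m_\delta$. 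This is the bare $|m|$ term in \eqref{eq:response-bound}.

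For \eqref{eq:sign-bias}, apply the same Lindeberg replacement directly to $\E\psi_\rho(W)$. Matching the first two moments of each summand leaves a third-order error $C\rho\sum_j|v_j|^3$. The Gaussian with mean $d$ and variance $\sigma^2$ then gives $\E\psi_\rho(d+\sigma G)$. Because $\psi_\rho$ is odd, $\E\psi_\rho(\sigma G)=0$, and moving the mean from $0$ to $d$ costs $\norm{\psi_\rho'}_\infty|d|\le C\rho|d|$.

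Two points need attention. First, there is no $|\sigma^2-1|$ term in $t$; this is consistent with the argument, since oddness kills every centered Gaussian regardless of its variance. Second, $\sum_j|v_j|^3\le\norm v_\infty\norm v_2^2=\norm v_\infty$.

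The main obstacle is bookkeeping in the Lindeberg steps. Every error must carry the factor $\rho$, which follows from \eqref{eq:psi-derivatives}. In addition, the third-order remainder must involve only third absolute moments of $\zeta_j$ and of the matching Gaussians, and both are bounded in terms of the given third-moment bound because $\sigma^2\le2$. With these checks the constants depend only on $\delta$ and that bound.
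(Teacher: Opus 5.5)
Your argument is correct and is essentially the paper's proof: a leave-one-out Taylor expansion of $\psi_\rho$, a normal approximation of the remaining linear form, the identity $\E\psi_\rho'(G)=\E[G\psi_\rho(G)]=\rho$, and oddness for $t$. The one difference is that you do the normal approximation by a Lindeberg swap using the derivative bounds \eqref{eq:psi-derivatives}, where the paper uses the Wasserstein Berry--Esseen inequality.

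One justification needs correcting: $|m|\le\sigma$ is false in general (shift any law by a constant). The bound $|m|\le(\E|\zeta_1|^3)^{1/3}$ still lets you absorb $m^2|v_i|$ and $|m||v_i|$ into $C|m|$. The paper avoids the issue by expanding in $\zeta_i-m$, so only $\sigma^2$ multiplies $v_i$ and the leave-one-out argument has mean exactly $d$.
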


\begin{proof}
Write $\zeta_i^\circ=\zeta_i-m$ and
$A_i=\sum_{j\ne i}v_j\zeta_j^\circ$.  A first-order Taylor expansion,
using $\E\zeta_i^\circ=0$, gives
\begin{align}
\E[\zeta_i^\circ\psi_\rho(A_i+d+v_i\zeta_i^\circ)]
 &=v_i\sigma^2\E\psi_\rho'(A_i+d)+R_i,
\label{eq:Taylor-response}\\
|R_i|&\le C\rho v_i^2\E|\zeta_i^\circ|^3.
\label{eq:Taylor-remainder}
\end{align}
For $p\ge1$, we write $\mathbf d_p(\mu,\nu)$ for the $p$-Wasserstein
distance: the infimum of $\|X-Y\|_{L^p}$ over couplings with laws
$\mu$ and $\nu$.  The Wasserstein Berry--Esseen inequality for independent
weighted sums
\cite[Theorem~3.1]{ChenGoldsteinShao11} implies
\begin{equation}
\mathbf d_1\!\left(\Law(A_i),
       N(0,\sigma^2(1-v_i^2))\right)
 \le C\sum_{j\ne i}|v_j|^3\le C\norm v_\infty.
\label{eq:Wasserstein-BE}
\end{equation}
The variance $\sigma^2(1-v_i^2)\ge3/8$ keeps the normalizing factor
in the Berry--Esseen bound uniformly bounded.  By Lipschitzness of
$\psi_\rho'$,
\begin{equation}
\left|\E\psi_\rho'(A_i+d)-\E\psi_\rho'(G)\right|
 \le C\rho(\norm v_\infty+|\sigma^2-1|+|d|).
\label{eq:derivative-CLT}
\end{equation}
Gaussian integration by parts gives
\begin{equation}
\E\psi_\rho'(G)=\E[G\psi_\rho(G)]=\rho.
\label{eq:Gaussian-IBP}
\end{equation}
The remaining mean contribution is
$m\,\E\psi_\rho(\langle v,\zeta\rangle)$, bounded by $C\rho|m|$.
Combining these estimates proves \eqref{eq:response-bound}.
For \eqref{eq:sign-bias}, compare $\sum_jv_j\zeta_j^\circ$ with
$\sigma G$ in Wasserstein distance and shift by $d$.
Oddness gives $\E\psi_\rho(\sigma G)=0$ for every $\sigma$, yielding
\eqref{eq:sign-bias} directly.
\end{proof}

For a drift $r\ne0$ chosen before the current column is observed,
condition on the available past and put
\begin{equation}
\rho=\frac{\norm r_2}{\sqrt N},\qquad v=\frac r{\norm r_2}.
\label{eq:rho-v-request}
\end{equation}
After observing the truncated vector $\zeta$, draw an independent
$U'\sim\operatorname{Unif}[-1,1]$ and choose
\begin{equation}
\eps=
\begin{cases}
+1,&U'\le\psi_\rho(\langle v,\zeta\rangle),\\
-1,&U'>\psi_\rho(\langle v,\zeta\rangle).
\end{cases}
\label{eq:sign-selection}
\end{equation}
For $r=0$, choose an independent fair sign.  For the truncated law,
the preceding lemma yields, with expectations conditional on that past,
\begin{equation}
\E[\eps\zeta_i]=\frac{r_i}{\sqrt N}+e_i,\qquad
|e_i|\le C\left[
 \frac{|r_i|}{\sqrt N}(\norm v_\infty+N^{-2/3}+\sqrt M N^{-1})
 +\rho N^{-1}\right].
\label{eq:one-column-drift-error}
\end{equation}

\subsection{An auxiliary sign for nonsymmetric inputs}
\label{sec:signed-law}

Conditioning the unsigned vector on the chosen sign gives a bounded
density relative to $\nu^{\otimes M}$, including when $\nu$ is nonsymmetric.

\begin{lemma}
\label{lem:exact-tilt}
Under \eqref{eq:sign-selection}, with
$t=\E\psi_\rho(\langle v,\zeta\rangle)$, for $\sigma\in\{-1,1\}$,
\begin{equation}
\Pp(\eps=\sigma)=\frac{1+\sigma t}{2},\qquad
\frac{\dd\Law(\zeta\mid\eps=\sigma)}{\dd\nu^{\otimes M}}(x)
 =\frac{1+\sigma\psi_\rho(\langle v,x\rangle)}{1+\sigma t}.
\label{eq:product-tilt-density}
\end{equation}
In particular the conditional density lies between
\begin{equation}
\frac{1-a_\delta}{1+a_\delta}
\quad\text{and}\quad
\frac{1+a_\delta}{1-a_\delta}.
\label{eq:tilt-bounds}
\end{equation}
\end{lemma}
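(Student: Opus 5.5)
The plan is to compute the joint law of $(\zeta,\eps)$ directly from the selection rule \eqref{eq:sign-selection} and then apply Bayes' formula. Condition on the past, so that $r$, and hence $\rho$ and $v$, are fixed. Then $\zeta\sim\nu^{\otimes M}$, and $U'\sim\operatorname{Unif}[-1,1]$ is independent of $\zeta$.

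\textbf{Conditional sign probability.} Given $\zeta=x$, the rule gives
\[
\Pp(\eps=+1\mid\zeta=x)=\Pp\bigl(U'\le\psi_\rho(\langle v,x\rangle)\bigr)=\frac{1+\psi_\rho(\langle v,x\rangle)}{2}.
\]
This uses $|\psi_\rho|\le a_\delta<1$ from \eqref{eq:psi-calibration}, so the threshold lies in $(-1,1)$. Similarly $\Pp(\eps=-1\mid\zeta=x)=(1-\psi_\rho(\langle v,x\rangle))/2$. Both cases can be written as $(1+\sigma\psi_\rho(\langle v,x\rangle))/2$ for $\sigma\in\{-1,1\}$.

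\textbf{Marginal of the sign.} Integrating the conditional probability against $\nu^{\otimes M}$ gives
\[
\Pp(\eps=\sigma)=\frac{1+\sigma t}{2},\qquad t=\E\psi_\rho(\langle v,\zeta\rangle).
\]
Here $|t|\le a_\delta<1$, so the denominator $1+\sigma t$ below is positive.

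\textbf{Density.} For a Borel set $E$,
\[
\Pp(\zeta\in E,\ \eps=\sigma)=\int_E\frac{1+\sigma\psi_\rho(\langle v,x\rangle)}{2}\,\nu^{\otimes M}(\dd x).
\]
Dividing by $\Pp(\eps=\sigma)$ gives the Radon--Nikodym derivative \eqref{eq:product-tilt-density}. Measurability is immediate, since $x\mapsto\psi_\rho(\langle v,x\rangle)$ is continuous.

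\textbf{Bounds \eqref{eq:tilt-bounds}.} The numerator $1+\sigma\psi_\rho(\langle v,x\rangle)$ lies in $[1-a_\delta,1+a_\delta]$. The denominator $1+\sigma t$ lies in the same interval, because $|t|\le\norm{\psi_\rho}_\infty\le a_\delta$. Taking the extreme ratios gives the stated lower bound $(1-a_\delta)/(1+a_\delta)$ and upper bound $(1+a_\delta)/(1-a_\delta)$.

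\textbf{Case $r=0$.} Here $\eps$ is an independent fair sign, so the conditional density is $1$. This matches the formula with $\psi_0\equiv0$ and $t=0$.

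There is no real obstacle. The only points needing care are the uniform bound $\norm{\psi_\rho}_\infty\le a_\delta<1$, which ensures both that the threshold is a genuine probability and that the denominator stays away from zero, and the fact that everything is computed conditionally on the past, where $r$ is measurable and $\zeta,U'$ are fresh.
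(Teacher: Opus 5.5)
Your proposal is correct and follows the paper's proof: the paper writes the joint law $\Pp(\eps=\sigma,\zeta\in\dd x)=\tfrac12\bigl(1+\sigma\psi_\rho(\langle v,x\rangle)\bigr)\nu^{\otimes M}(\dd x)$ and then integrates and conditions, exactly as you do. Your check that $\norm{\psi_\rho}_\infty\le a_\delta<1$ keeps both numerator and denominator in $[1-a_\delta,1+a_\delta]$ supplies the bounds \eqref{eq:tilt-bounds}, which the paper leaves implicit.
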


\begin{proof}
The joint law is
$\Pp(\eps=\sigma,\zeta\in\dd x)
 =\tfrac12(1+\sigma\psi_\rho(\langle v,x\rangle))\nu^{\otimes M}(\dd x)$.
Integrating and conditioning give the claims.
\end{proof}

\begin{lemma}
\label{lem:product-innovation}
Under the hypotheses of \cref{lem:low-influence-response}, let $\eps$
be chosen by \eqref{eq:sign-selection}.  Couple the actual pair
$(\eps,\zeta)$ to a fair Rademacher sign $S$ and a vector
$Z\sim\nu^{\otimes M}$ such that $Z$ is independent of $(\eps,S)$.
Writing $Y=\eps\zeta$, one can arrange, for every $s\ge3$
with $\E|\zeta_1|^s<\infty$,
\begin{align}
\E|Y_i-SZ_i|^2&\le C(\rho|v_i|+|t|),
\label{eq:YZ-second}\\
\E|Y_i-SZ_i|^{s}&\le C_{s}\E|\zeta_1|^{s}.
\label{eq:YZ-q}
\end{align}
For
\begin{equation}
D=Y-SZ-\E Y,
\label{eq:D-definition}
\end{equation}
we have
\begin{equation}
\E D_i=0,\qquad
\E D_i^2\le C\rho(|v_i|+\norm v_\infty+\sqrt M|m|),\qquad
\E|D_i|^{s}\le C_{s}\E|\zeta_1|^{s}.
\label{eq:D-moment-bounds}
\end{equation}
The constants in the second-moment bounds depend only on $\delta$ and
a third-moment bound; the $s$th moment enters linearly as displayed.
For fixed $\nu$, the conditional law of $(S,Z)$ given $(\eps,\zeta)$
is measurable in the parameters $(\rho,v,t)$ and the observed pair.
\end{lemma}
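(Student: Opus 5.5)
The construction proceeds in two stages: first couple the slightly biased sign $\eps$ to a fair sign $S$, then couple the conditional law of $\zeta$ given $\eps$ to the product law $\nu^{\otimes M}$, coordinate by coordinate, using maximal couplings.

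\textbf{Step 1 (the sign).} By \cref{lem:exact-tilt}, $\Pp(\eps=\sigma)=(1+\sigma t)/2$. Couple $\eps$ with a fair sign $S$ so that $\Pp(S\ne\eps)=|t|/2$. This is the maximal coupling of two laws on $\{-1,+1\}$. We also require $S$ to be conditionally independent of $\zeta$ given $\eps$, so $S$ is drawn from a kernel depending only on $(\eps,t)$.

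\textbf{Step 2 (the vector).} Conditionally on $\eps=\sigma$, $\zeta$ has law $P_\sigma$ with density $f_\sigma(x)=(1+\sigma\psi_\rho(\langle v,x\rangle))/(1+\sigma t)$ relative to $\nu^{\otimes M}$. We build $Z$ sequentially. Given $Z_{<i}$ and $\zeta_{<i}$, take a maximal coupling of the conditional law of $\zeta_i$ under $P_\sigma$ given $\zeta_{<i}$ with $\nu$, and make it conditionally independent of everything else. Iterating, $Z_i$ has law $\nu$ given $(\eps,S,Z_{<i})$, so $Z\sim\nu^{\otimes M}$ independent of $(\eps,S)$. Measurability in $(\rho,v,t)$ and the observed pair follows from the standard measurable construction of maximal coupling kernels (the appendix on coupling kernels).

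\textbf{Step 3 (the moment bounds).} The $s$-th moment bound \eqref{eq:YZ-q} is immediate. Write $|Y_i-SZ_i|\le|\zeta_i|+|Z_i|$. Since $P_\sigma$ has density at most $(1+a_\delta)/(1-a_\delta)$, we get $\E|\zeta_i|^s\le C\E|\zeta_1|^s$. The same bound transfers to $D$, since $|\E Y_i|\le C$.

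For \eqref{eq:YZ-second}, split
\[
Y_i-SZ_i=(\eps-S)\zeta_i+S(\zeta_i-Z_i).
\]
\emph{First term.} Its second moment is at most $4\E[\zeta_i^2\1_{\{\eps\ne S\}}]$. Conditional independence and the bounded density give the bound $C|t|$.

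\emph{Second term.} This requires the marginal total-variation distance between $\Law(\zeta_i\mid\zeta_{<i},\eps)$ and $\nu$ to be small, of order $\rho|v_i|$, weighted by $\zeta_i^2$. The conditional density of $\zeta_i$ given $\zeta_{<i}$ is
\[
g(x_i)=\frac{1+\sigma\E[\psi_\rho(\langle v,\zeta\rangle)\mid\zeta_{<i},\zeta_i=x_i]}{1+\sigma\E[\psi_\rho\mid\zeta_{<i}]}.
\]
Since $\psi_\rho$ is $C\rho$-Lipschitz, the numerator differs from its $\nu$-average in $x_i$ by at most $C\rho|v_i|(|x_i|+\E|\zeta_1|)$. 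Both the numerator and the denominator are bounded below by $1-a_\delta$. Hence $|g(x_i)-1|\le C\rho|v_i|(1+|x_i|)$.

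This estimate is the main point of the proof. Under a maximal coupling, $\zeta_i\ne Z_i$ with probability $\tfrac12\int|g-1|\dd\nu$. The delicate part is bounding $\E[(\zeta_i-Z_i)^2]$ rather than just this probability. On the event $\{\zeta_i\ne Z_i\}$, the maximal coupling draws $\zeta_i$ from $(g-1)_+\nu$ normalized, and $Z_i$ from $(1-g)_+\nu$ normalized. So
\[
\E(\zeta_i-Z_i)^2\le 2\int x^2|g(x)-1|\,\nu(\dd x)\le C\rho|v_i|\,\E(|\zeta_1|^2+|\zeta_1|^3),
\]
which requires the third-moment bound. Combining the two terms gives \eqref{eq:YZ-second}.

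\textbf{Step 4 (the centred remainder $D$).} By definition, $D=Y-SZ-\E Y$. Since $S$ is fair and independent of $Z$, $\E[SZ_i]=0$, so $\E D_i=0$.

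For the variance, use $\E D_i^2\le\E(Y_i-SZ_i)^2$. Then insert $|t|\le C\rho(\norm v_\infty+|d|)$ from \eqref{eq:sign-bias}, together with $|d|\le\sqrt M|m|$ by Cauchy--Schwarz, since $\norm v_2=1$. This gives
\[
\E D_i^2\le C\rho(|v_i|+\norm v_\infty+\sqrt M|m|),
\]
which is \eqref{eq:D-moment-bounds}.
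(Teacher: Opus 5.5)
Your proposal is correct and follows essentially the same route as the paper. You use a maximal coupling of $\eps$ with a fair sign drawn from a kernel depending only on $\eps$, then successive coordinatewise maximal couplings with the bound $|g_i-1|\le C\rho|v_i|(1+|x|)$ and the residual-measure estimate for $\E|\zeta_i-Z_i|^2$. The one cosmetic difference is the split: you write $Y_i-SZ_i=(\eps-S)\zeta_i+S(\zeta_i-Z_i)$ and bound the first term using conditional independence of $S$ and $\zeta$ given $\eps$ together with the bounded tilt density, whereas the paper writes $\eps(\zeta_i-Z_i)+(\eps-S)Z_i$ and uses the independence of $Z$ from $(\eps,S)$ directly.
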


\begin{proof}
Maximally couple $\eps$ to a fair sign $S$.  Given the observed pair,
sample $S$ from the conditional law
\[
\Pp(S=-\eps\mid\eps,\zeta)
 =\frac{(\eps t)_+}{1+\eps t}.
\]
This conditional probability depends only on $\eps$, and
$\Pp(S\ne\eps)=|t|/2$.  Conditioning further on $S$ therefore preserves
\eqref{eq:product-tilt-density}.

Fix $\eps=\sigma$ and $S$.  Put $h_\sigma(x)=1+\sigma\psi_\rho(\langle v,x\rangle)$ and
\begin{equation}
H_i(x_1,\ldots,x_i)
 =\int h_\sigma(x_1,\ldots,x_i,y_{i+1},\ldots,y_M)
      \prod_{j>i}\nu(\dd y_j).
\label{eq:Hi-definition}
\end{equation}
Here $H_0=1+\sigma t$ and $H_M=h_\sigma$.  Write
$x_{<i}=(x_1,\ldots,x_{i-1})$.  The conditional density of $\zeta_i$
with respect to $\nu$, given $\zeta_{<i}=x_{<i}$, is
\begin{equation}
g_i(x)=\frac{H_i(x_{<i},x)}{H_{i-1}(x_{<i})}.
\label{eq:gi-density}
\end{equation}
The normalization $1+\sigma t$ cancels; $H_{i-1}\ge1-a_\delta$.
Comparing $x$ with an independent $\zeta_i'$ gives
\begin{equation}
|g_i(x)-1|\le C_\delta\rho|v_i|(|x|+\E|\zeta_1|).
\label{eq:gi-minus-one}
\end{equation}
Apply maximal coupling to $g_i\,\dd\nu$ and $\dd\nu$ successively.
Each $Z_i$ is sampled conditionally on the observed prefix
$(\zeta_1,\ldots,\zeta_i)$, as described in \cref{app:coupling-kernels}.
Its conditional law given the preceding $\zeta$ and $Z$ coordinates and
$(\eps,S)$ is $\nu$.  Induction therefore gives the asserted marginal law and independence.
The residual measures give
\begin{equation}
\E[|\zeta_i-Z_i|^2\mid\eps,S,\zeta_{<i},Z_{<i}]
 \le C\int(1+x^2)|g_i(x)-1|\,\nu(\dd x)
 \le C\rho|v_i|.
\label{eq:maximal-second-proof}
\end{equation}
Using
$Y_i-SZ_i=\eps(\zeta_i-Z_i)+(\eps-S)Z_i$
and independence of $Z$ from $(\eps,S)$ proves
\[
\E|Y_i-SZ_i|^2
 \le C\rho|v_i|+8\E Z_i^2\,\Pp(S\ne\eps)
 \le C(\rho|v_i|+|t|).
\]
The $s$th-moment bound follows from the marginals and
$|x-y|^{s}\le2^{s-1}(|x|^{s}+|y|^{s})$.
Since $\E[SZ_i]=0$, subtracting the mean proves
\eqref{eq:D-moment-bounds}, using \eqref{eq:sign-bias}.
The kernels are measurable by \cref{app:coupling-kernels}.
\end{proof}

Combining the two estimates gives the exact decomposition
\begin{equation}
\eps\zeta=SZ+\frac r{\sqrt N}+e+D.
\label{eq:one-step-decomposition}
\end{equation}
For zero drift, take the independent fair sign $\eps$ as $S$ and set
$Z=\zeta$; then $D=e=0$.

\subsection{Delocalizing the prescribed drift}
\label{sec:delocalization}

A drift concentrated on a few coordinates can give a large error in
\eqref{eq:one-column-drift-error}, especially during the final correction.
Before observing each vector, we perturb the predictable base drift $b_k$:
draw a fresh vector $\eta_k\in\{\pm1\}^M$ with independent Rademacher
coordinates, independently of the past, and form the candidate
\begin{equation}
\widetilde r_k:=b_k+\gamma\eta_k,
\qquad \gamma>0.
\label{eq:perturbed-drift}
\end{equation}

\begin{lemma}
\label{lem:delocalization}
Fix $s>2$ and $\Lambda>0$.  Let $b_k$ be determined before the fresh
Rademacher perturbation $\eta_k$ is drawn.  Conditional on the information
available at that time,
\begin{equation}
\Pp\left(
\norm{\widetilde r_k}_2^2<
\frac12(\norm{b_k}_2^2+\gamma^2M)
\right)\le2e^{-cM}.
\label{eq:perturbation-norm-event}
\end{equation}
If in addition $M^{-1}\sum_i|b_{k,i}|^s\le\Lambda^s$, then on the
complement of this event,
\begin{equation}
\frac{\norm{\widetilde r_k}_\infty}
     {\norm{\widetilde r_k}_2}
\le C_{\Lambda,\gamma}M^{1/s-1/2}.
\label{eq:perturbation-delocalization}
\end{equation}
Along $M/N\to\alpha$, if the empirical moment bound holds at every
step, a union bound gives \eqref{eq:perturbation-delocalization}
simultaneously over $O(N)$ steps with probability $1-e^{-\Omega(N)}$.
Moreover,
\begin{equation}
\max_{i\le M,\,\ell\le N}\left|\frac\gamma N
\sum_{k=1}^{\ell}\eta_{k,i}\right|
=O_{\Pp}\left(\gamma\sqrt{\frac{\log M}{N}}\right)=o_{\Pp}(1),
\label{eq:perturbation-terminal}
\end{equation}
and
\begin{equation}
\frac{\norm{b_k+\gamma\eta_k}_2}{\sqrt M}
\le\frac{\norm{b_k}_2}{\sqrt M}+\gamma.
\label{eq:perturbation-energy}
\end{equation}
\end{lemma}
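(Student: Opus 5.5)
The four claims are handled separately, each by an elementary estimate conditional on the information at the time $b_k$ is fixed.

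For \eqref{eq:perturbation-norm-event}, expand
\[
\norm{\widetilde r_k}_2^2=\norm{b_k}_2^2+\gamma^2M+2\gamma\langle b_k,\eta_k\rangle.
\]
The event requires $2\gamma\langle b_k,\eta_k\rangle<-\tfrac12(\norm{b_k}_2^2+\gamma^2M)$. By Hoeffding, $\langle b_k,\eta_k\rangle$ is sub-Gaussian with variance proxy $\norm{b_k}_2^2$. The probability is therefore at most
\[
\exp\!\Big(-\frac{(\norm{b_k}_2^2+\gamma^2M)^2}{32\gamma^2\norm{b_k}_2^2}\Big).
\]
Using $(x+y)^2\ge 4xy$ with $x=\norm{b_k}_2^2$ and $y=\gamma^2M$, the exponent is at least $M/8$. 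This gives the bound with $c=1/8$; the factor $2$ is harmless. When $b_k=0$ the event is empty.

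For \eqref{eq:perturbation-delocalization}, on the complement we have $\norm{\widetilde r_k}_2\ge\sqrt{\gamma^2M/2}$. Also $\norm{\widetilde r_k}_\infty\le\norm{b_k}_\infty+\gamma$, and the empirical moment bound gives
\[
\norm{b_k}_\infty\le\Big(\sum_i|b_{k,i}|^s\Big)^{1/s}\le\Lambda M^{1/s}.
\]
Dividing yields
\[
\frac{\norm{\widetilde r_k}_\infty}{\norm{\widetilde r_k}_2}\le\frac{\sqrt2(\Lambda M^{1/s}+\gamma)}{\gamma\sqrt M}\le C_{\Lambda,\gamma}M^{1/s-1/2}.
\]
The union bound over $O(N)$ steps costs a factor $O(N)$ against $2e^{-cM}$, and $M\asymp N$, so the failure probability is $e^{-\Omega(N)}$. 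Here one applies the conditional bound at each step and sums by the tower property; the moment hypothesis only enters on the complementary event, so no conditioning issue arises.

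For \eqref{eq:perturbation-terminal}, the $\eta_{k,i}$ are i.i.d.\ Rademacher, independent of everything else. For each fixed $i$, Lévy's or Doob's maximal inequality combined with Hoeffding gives
\[
\Pp\Big(\max_{\ell\le N}\Big|\sum_{k\le\ell}\eta_{k,i}\Big|>x\Big)\le 4e^{-x^2/(2N)}.
\]
Take $x=\sqrt{4N\log M}$ and a union bound over $i\le M$; then the maximum is $O_\Pp(\sqrt{N\log M})$. Multiplying by $\gamma/N$ gives $O_\Pp(\gamma\sqrt{\log M/N})$, which is $o_\Pp(1)$ since $\log M/N\to0$.

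Finally, \eqref{eq:perturbation-energy} is the triangle inequality together with $\norm{\eta_k}_2=\sqrt M$.

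No step is a real obstacle. The only point requiring care is the first, namely getting an exponent linear in $M$ uniformly in $b_k$, which the AM–GM step handles.
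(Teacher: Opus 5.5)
Your proof is correct and follows the paper's argument. You expand the square, apply a Rademacher (Hoeffding) tail bound, and use $(\norm{b_k}_2^2+\gamma^2M)^2\ge4\gamma^2M\norm{b_k}_2^2$ to get an exponent linear in $M$. On the complement you lower-bound the denominator by a multiple of $\gamma\sqrt M$ and use $\norm{b_k}_\infty\le\Lambda M^{1/s}$, and union bounds and the triangle inequality handle the rest. You simply make the constants explicit ($c=1/8$ and $C_{\Lambda,\gamma}=\sqrt2(\Lambda+\gamma)/\gamma$), which the paper leaves implicit.
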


\begin{proof}
Expanding the square,
\begin{equation}
\norm{b_k+\gamma\eta_k}_2^2
=\norm{b_k}_2^2+\gamma^2M+2\gamma\langle b_k,\eta_k\rangle.
\label{eq:perturbation-square}
\end{equation}
A Rademacher tail bound proves \eqref{eq:perturbation-norm-event}; the
exponent is of order $M$ because
$(\norm{b}_2^2+\gamma^2M)^2\ge4\gamma^2M\norm{b}_2^2$.  On the complement of
the event in \eqref{eq:perturbation-norm-event}, the denominator in
\eqref{eq:perturbation-delocalization} is at least
$c\gamma\sqrt M$, while $\norm{b_k}_\infty\le\Lambda M^{1/s}$.  Equation
\eqref{eq:perturbation-terminal} follows from Rademacher tail bounds and a union bound over the
coordinates and partial sums, and \eqref{eq:perturbation-energy} is the triangle inequality.
\end{proof}

Thus the perturbation increases the normalized Euclidean norm by at most
$\gamma$, gives the bound \eqref{eq:perturbation-delocalization}, and
contributes a vanishing terminal drift.

\subsection{Iteration and strong approximation}
\label{sec:iteration}

Fix an auxiliary exponent $s>12$, $0<\Lambda<\infty$, and
$0<\delta<\cstar$.
Suppose that every applied drift, chosen before observing the current
column, satisfies
\begin{align}
\frac{\norm{r_k}_2}{\sqrt N}&\le\cstar-\delta,
\label{eq:drift-cap}\\
\left(\frac1M\sum_i|r_{k,i}|^s\right)^{1/s}&\le\Lambda,
\label{eq:drift-Ls}\\
\frac{\norm{r_k}_\infty}{\norm{r_k}_2}
 &\le\Lambda M^{1/s-1/2},
\label{eq:drift-delocalization}
\end{align}
for $1\le k\le N$.  The last ratio is zero when $r_k=0$.
The middle expression is the empirical $L^s$ norm; the Euclidean norm
$\|r_k\|_2$ in the other two conditions is unnormalized.

We use the causal filtration that reveals the algorithm's randomness
as it is used.  For $1\le k\le N$, let $\cH_{k-1}$ be generated by the inputs,
the uniform random variables used to choose the signs, perturbations, and auxiliary coupling variables
through step $k-1$, together with the fresh perturbation $\eta_k$.
Let $\cH_N$ be generated by all these variables through step $N$.
The drift $r_k$ is $\cH_{k-1}$-measurable.  The current column and the
uniform random variable used to choose its sign are independent of $\cH_{k-1}$.
These sigma-fields do not contain the Gaussian variables added later
by strong approximation.

\begin{proposition}
\label{prop:iterated-coupling}
Assume $M\asymp N$ and $\mu\in\cM_4(K)$.  For all sufficiently large
$N$, on an enlarged probability space there are independent pairs
$(S_k,Z_k)$, with each $S_k$ a fair Rademacher sign, such that
\begin{equation}
\Law(S_k,Z_k\mid\cH_{k-1})
 =\Law(S_k)\otimes\nu_N^{\otimes M}.
\label{eq:product-innovations}
\end{equation}
For every $0\le\ell\le N$,
\begin{equation}
\frac1{\sqrt N}\sum_{k\le\ell}\eps_k\zeta_k
 =\frac1{\sqrt N}\sum_{k\le\ell}S_kZ_k
  +\frac1N\sum_{k\le\ell}r_k+E_\ell+Q_\ell,
\label{eq:iterated-decomposition}
\end{equation}
where $E_\ell$ is the accumulated predictable bias and $Q_\ell$ is
a martingale in $(\cH_\ell)$.  They obey
\begin{align}
\max_{\ell\le N}\norm{E_\ell}_\infty
 &\le C N^{-1/2+2/s},
\label{eq:iterated-bias}\\
\left\|\max_{\ell\le N}\norm{Q_\ell}_\infty\right\|_{L^{s}}
 &\le C_s N^{-(s-4)/(6s)}.
\label{eq:iterated-martingale}
\end{align}
The constants and the lower bound on $N$ depend only on $K,s,\Lambda,\delta$
and fixed positive lower and upper bounds on $M/N$.
\end{proposition}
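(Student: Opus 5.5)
We apply the one-step construction of \cref{lem:product-innovation} at each arrival, conditionally on $\cH_{k-1}$. The drift $r_k$ is $\cH_{k-1}$-measurable, and $(\zeta_k,U'_k)$ is independent of $\cH_{k-1}$. Hence, conditionally on $\cH_{k-1}$, the parameters $(\rho_k,v_k,t_k)$ are fixed, and the hypotheses of \cref{lem:low-influence-response} hold for $\nu_N$ once $N$ is large. Indeed, $\sigma_N^2\in[1/2,2]$ by \eqref{eq:truncation-moments}, and $\norm{v_k}_\infty\le\Lambda M^{1/s-1/2}\le1/2$ by \eqref{eq:drift-delocalization}. We sample $(S_k,Z_k)$ from the measurable conditional kernel of \cref{lem:product-innovation}, using fresh auxiliary uniforms, and adjoin these uniforms to $\cH_k$. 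The kernel yields $\Law(S_k,Z_k\mid\cH_{k-1})=\Law(S_k)\otimes\nu_N^{\otimes M}$, which does not depend on the conditioning. Each $(S_k,Z_k)$ is $\cH_k$-measurable, so induction over $k$ gives independence across arrivals and \eqref{eq:product-innovations}. For $r_k=0$ we use the trivial choice. Summing \eqref{eq:one-step-decomposition} divided by $\sqrt N$ then gives \eqref{eq:iterated-decomposition} with
\[
E_\ell=\frac1{\sqrt N}\sum_{k\le\ell}e_k,\qquad Q_\ell=\frac1{\sqrt N}\sum_{k\le\ell}D_k .
\]
Here $e_k$ is $\cH_{k-1}$-measurable, and $\E[D_k\mid\cH_{k-1}]=0$ by \eqref{eq:D-moment-bounds}, so $Q$ is an $(\cH_\ell)$-martingale.

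For the bias bound \eqref{eq:iterated-bias}, we use \eqref{eq:one-column-drift-error} together with $|r_{k,i}|\le\norm{r_k}_\infty\le\Lambda M^{1/s}$ (from \eqref{eq:drift-Ls}), $\norm{v_k}_\infty\le CN^{1/s-1/2}$, $\sqrt MN^{-1}\lesssim N^{-1/2}$ and $\rho_k\le\cstar$. Each coordinate then satisfies
\[
|e_{k,i}|\le C\bigl[N^{1/s-1/2}\cdot N^{1/s-1/2}+N^{-1}\bigr]\le CN^{-1+2/s}.
\]
Summing over $N$ steps and dividing by $\sqrt N$ gives $CN^{-1/2+2/s}$ deterministically, uniformly in $\ell$ and $i$. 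This bound is crude but suffices.

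The martingale bound \eqref{eq:iterated-martingale} is the main step. For each coordinate $i$ we apply Burkholder--Rosenthal to $Q_{\ell,i}$. The conditional variances are controlled by \eqref{eq:D-moment-bounds}:
\[
\E[D_{k,i}^2\mid\cH_{k-1}]\le C\rho_k(|v_{k,i}|+\norm{v_k}_\infty+\sqrt M|m_N|)\le CN^{1/s-1/2}.
\]
This uses $\sqrt M|m_N|\le CN^{-1/2}$ and $|v_{k,i}|\le\norm{v_k}_\infty$. So the quadratic-variation term of $Q_{\cdot,i}$ is at most $CN^{1/s-1/2}$ almost surely, and the Rosenthal variance part contributes $CN^{(1/s-1/2)/2}$ in $L^s$. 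The jump part uses $\E|D_{k,i}|^s\le C_s\E|\zeta|^s\le C_sKN^{(s-4)/3}$. It contributes
\[
\Bigl(N^{-s/2}\cdot N\cdot N^{(s-4)/3}\Bigr)^{1/s}=N^{-1/6-1/(3s)}.
\]
Doob's inequality then gives $\norm{\max_\ell|Q_{\ell,i}|}_{L^s}\le C_s N^{-\min\{1/4-1/(2s),\,1/6+1/(3s)\}}$. To pass to the maximum over $M\asymp N$ coordinates we use $\E\max_i X_i^s\le\sum_i\E X_i^s$, which costs a factor $M^{1/s}\le CN^{1/s}$. The resulting exponent is $\min\{1/4-3/(2s),\,1/6-2/(3s)\}$, up to the precise form. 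For $s>12$ this is at least $(s-4)/(6s)=1/6-2/(3s)$: the comparison $1/4-3/(2s)\ge1/6-2/(3s)$ is equivalent to $1/12\ge 5/(6s)$, i.e.\ $s\ge10$, and $s>12$ keeps both exponents positive.

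This gives \eqref{eq:iterated-martingale}. I expect this exponent bookkeeping to be the delicate point, specifically matching the jump term against the union-bound factor. If the displayed rate requires it, I would sharpen the variance term by using the empirical $L^s$ bound \eqref{eq:drift-Ls} on $|v_{k,i}|$ instead of $\norm v_\infty$.
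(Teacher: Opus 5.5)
Your proposal is correct and takes essentially the same approach as the paper. You iterate the kernel of \cref{lem:product-innovation} in the causal filtration and bound the bias deterministically through $\norm{r_k}_\infty\le\Lambda M^{1/s}$ and \eqref{eq:one-column-drift-error}; you then apply Burkholder--Rosenthal coordinatewise with conditional variance $CN^{-1/2+1/s}$ and jump moment $C_sN^{(s-4)/3}$, and sum over the $M\asymp N$ rows, which yields exactly the two rates $N^{-1/4+3/(2s)}$ and $N^{-(s-4)/(6s)}$. Your bookkeeping already gives the displayed bound, so the sharpening of the variance term you mention at the end is not needed.
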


\begin{proof}
Iterate the measurable kernel in \cref{lem:product-innovation} conditional
on the causal history.  The one-step identity is
\begin{equation}
\eps_k\zeta_{k,i}
 =S_kZ_{k,i}+\frac{r_{k,i}}{\sqrt N}+e_{k,i}+D_{k,i}.
\label{eq:conditional-one-step}
\end{equation}
By \eqref{eq:truncation-moments} and the drift bounds,
\begin{align}
|e_{k,i}|&\le C\left[
 \frac{|r_{k,i}|}{\sqrt N}
 (M^{1/s-1/2}+N^{-2/3}+N^{-1/2})+N^{-1}\right],
\label{eq:conditional-e-bound}\\
\E[D_{k,i}\mid\cH_{k-1}]&=0,
\label{eq:D-centered-conditional}\\
\E[D_{k,i}^2\mid\cH_{k-1}]&\le C N^{-1/2+1/s},\qquad
\E[|D_{k,i}|^{s}\mid\cH_{k-1}]
 \le C_{s} N^{(s-4)/3}.
\label{eq:D-conditional-moments}
\end{align}
The one-step residual $D_k$ is $\cH_k$-measurable and centered
conditionally on $\cH_{k-1}$.  The current perturbation has already
been revealed in $\cH_{k-1}$, whereas future algorithmic coins remain
independent of that field.
The conditional second-moment bound includes the cost of coupling
$\eps_k$ to $S_k$, as in \eqref{eq:D-moment-bounds}.
The fixed conditional law in \eqref{eq:product-innovations} makes the
pairs $(S_k,Z_k)$ independent, with the entire sequence $(S_k)$
independent of the array $(Z_{k,i})$.

Define $E_{\ell,i}=N^{-1/2}\sum_{k\le\ell}e_{k,i}$ and
$Q_{\ell,i}=N^{-1/2}\sum_{k\le\ell}D_{k,i}$.
Since $\norm{r_k}_\infty\le\Lambda M^{1/s}$,
\begin{equation}
\max_{\ell,i}|E_{\ell,i}|
 \le C\left(N^{-1/2+2/s}+N^{-2/3+1/s}
                 +N^{-1/2+1/s}+N^{-1/2}\right)
 \le C N^{-1/2+2/s}.
\label{eq:bias-calculation}
\end{equation}
For each coordinate, Burkholder--Rosenthal \cite{Burkholder73} gives
\begin{align}
\E\max_{\ell\le N}|Q_{\ell,i}|^{s}
 &\le C_{s}\E\left(
 \frac1N\sum_{k=1}^N\E[D_{k,i}^2\mid\cH_{k-1}]
 \right)^{s/2}
 +\frac{C_{s}}{N^{s/2}}\sum_{k=1}^N
 \E[|D_{k,i}|^{s}].
\label{eq:BR-coordinate}
\end{align}
Summing over $M=O(N)$ coordinates yields
\begin{equation}
\sum_i\E\max_{\ell\le N}|Q_{\ell,i}|^{s}
 \le C_{s}\left(N^{3/2-s/4}
       +N^{2-s/2+(s-4)/3}\right).
\label{eq:BR-summed}
\end{equation}
After taking the $s$th root, the two terms are
$N^{-1/4+3/(2s)}$ and $N^{-(s-4)/(6s)}$.  The first is smaller because
$s>12$.  This proves \eqref{eq:iterated-martingale}.  Only the jump term
uses the growing $s$th moment; the conditional variance bound uses a
uniform third-moment bound.
\end{proof}

We use Sakhanenko's strong approximation \cite{Sakhanenko85} in the
moment form stated in the author version of
\cite[Theorem~1, equation~(2)]{GotzeZaitsev12}; see also \cite{Shao95}.
For independent centered variables $V_k$ with finite $s$th moments, it
couples them to independent centered Gaussians $\Gamma_k$ of matching
variances so that
\begin{equation}
\E\max_{\ell\le N}\left|
 \sum_{k\le\ell}(V_k-\Gamma_k)\right|^{s}
 \le C_{s}\sum_{k\le N}\E|V_k|^{s}.
\label{eq:Sakhanenko}
\end{equation}
The bound applies to summands with different laws, with a constant
depending only on $s$; compare \cite{WaudbySmithLarssonRamdas25}.

Condition on the auxiliary signs and apply
\eqref{eq:Sakhanenko} independently in each row to
$S_k(Z_{k,i}-m_N)$.  These variables are conditionally independent and
centered, with variance $\sigma_N^2$.  The conditional Gaussian marginal
is the same array $(\sigma_N G_{k,i})$ for every sign sequence,
where all $G_{k,i}$ are independent standard Gaussians.  Write
$G_k=(G_{k,1},\ldots,G_{k,M})$.  For fixed $N$
there are finitely many sign sequences.  Choose a coupling for each row
and sign sequence.  After generating the pairs $(S_k,Z_k)$, sample the
Gaussian array from its regular conditional law, using independent
randomization across rows.  This preserves the original online trajectory.
The Gaussian marginal is the same product law for every sign sequence,
so the array is independent of $(S_k)$.  Its construction uses the
complete row through time $N$.
Summing the $s$th-moment bounds over rows gives an error
$C_{s} N^{-(s-4)/(6s)}$ after normalization by $\sqrt N$.
The mean path is deterministically bounded by
$|m_N|N^{-1/2}\max_\ell|\sum_{k\le\ell}S_k|
 \le\sqrt N|m_N|=O(N^{-1/2})$.
Replacing $\sigma_NG_{k,i}$ by $G_{k,i}$ costs
$O(N^{-2/3}\sqrt{\log N})$ in $L^s$ for the maximum over rows and
partial sums.  Consequently
\begin{equation}
\left\|\max_{i\le M,\,\ell\le N}
 \left|\frac1{\sqrt N}\sum_{k\le\ell}
       (S_kZ_{k,i}-G_{k,i})\right|\right\|_{L^{s}}
 \le C_{s} N^{-(s-4)/(6s)}.
\label{eq:all-row-strong-approximation}
\end{equation}

The normalized Gaussian partial sums
$N^{-1/2}\sum_{k\le\ell}G_{k,i}$ are the mesh values $B_i(\ell/N)$ of
independent Brownian motions.  At a deterministic split $k_0$, we perform this construction
separately before and after $k_0$, using independent auxiliary randomness.
Each pair $(S_k,Z_k)$ is independent of the causal history preceding step $k$,
so the Gaussian variables used after the split are independent of the
pre-split history and of the Gaussian variables attached before the split.
If $k_0/N<T$, we extend the pre-split Brownian paths to $T$ using increments
independent of the post-split construction.
The martingale estimates are proved in the causal filtration \emph{before}
conditioning on future auxiliary signs or adjoining the Gaussian coupling.
After this enlargement, the feedback and Euler comparisons are pathwise;
they use the Brownian marginal laws and the independence across the split.

The two coupling errors have $L^s$ norm $O(N^{-(s-4)/(6s)})$.
We use the fixed tolerance
\begin{equation}
\delta_N:=N^{-1/24}.
\label{eq:deltaN}
\end{equation}
For any prescribed $\theta>0$, taking $s$ sufficiently large makes the
probability that either error exceeds a fixed multiple of $\delta_N$
of order $O(N^{-\theta})$.  The predictable bias is $o(\delta_N)$.
The parameter choices and the remaining probability estimates are given
in \cref{app:parameters}; they use only the fourth moment of the original
entries.

\section{The online algorithm}
\label{sec:algorithm}

We construct the upper bound of \cref{thm:quantitative}.  Write
\[
D^{\mathrm{tr}}_{M,N}
:=\left\|\frac1{\sqrt N}\sum_{k=1}^N\eps_k\zeta_k\right\|_\infty
\]
for the discrepancy of the truncated signed sum.  The main step is the
following bound, which concerns a fixed margin above $R_\alpha$.

\begin{proposition}[Polynomial bounds for truncated inputs]
\label{prop:truncated-polynomial}
For every $\kappa_0>R_\alpha$, $K<\infty$, and $A>0$, parameters for
\cref{alg:online} can be chosen, independently of $\mu\in\cM_4(K)$, so that
\begin{equation}
\Pp(D^{\mathrm{tr}}_{M,N}>\kappa_0)
 +\E[(D^{\mathrm{tr}}_{M,N}-\kappa_0)_+]
 \le C_A N^{-A}.
\label{eq:truncated-polynomial}
\end{equation}
\end{proposition}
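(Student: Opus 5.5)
We prove \cref{prop:truncated-polynomial} by fixing an intermediate target $a\in(R_\alpha,\kappa_0)$ and designing \cref{alg:online} so that, on a good event of probability $1-O(N^{-A})$, the truncated state $N^{-1/2}\sum_{k\le N}\eps_k\zeta_k$ lies coordinatewise in $[-a-o(1),a+o(1)]\subset[-\kappa_0,\kappa_0]$.

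\textbf{Step 1: the Brownian control.} Using \cref{prop:brownian-facts}, pick $c<\ca$ with $\mathcal R(c)<a$, which continuity allows. The slack $\ca-c$ fixes $\delta$ in \eqref{eq:steering-slack} and leaves room for the perturbation $\gamma$ of \cref{lem:delocalization}. Take an optimal control for $\mathcal R(c)$, rescaled to horizon $[0,1]$ in the $\ell/N$ clock. Approximate it by a bounded simple predictable control $u^\eps$, a function of finitely many Brownian observations, in the form of \cref{app:Follmer}. On $[0,1-\tau]$ this control must bring the terminal distances to $[-a,a]$ to a small $s$th moment, so that $\E\dist(B_{1-\tau}+\int_0^{1-\tau}u^\eps_t\dd t,[-a',a'])^s$ is small for some $a'<a$. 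Here convexity of the interval ensures that conditional projection onto the truncated filtration does not increase this moment.

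\textbf{Step 2: implementing the control through the coupling.} Coordinate $i$ uses the drift $r_{k,i}=u^\eps(\widehat B_i)$, where $\widehat B_i(\ell/N)=X^{\mathrm{tr}}_{\ell,i}-N^{-1}\sum_{k\le\ell}r_{k,i}$ is computable online. We add the perturbation $\gamma\eta_k$ and check \eqref{eq:drift-cap}--\eqref{eq:drift-delocalization}. If a check fails, the algorithm switches permanently to a bounded auxiliary drift that still satisfies them. \Cref{prop:iterated-coupling} together with \eqref{eq:all-row-strong-approximation} then gives $\widehat B_i=B_i+O(\delta_N)$ uniformly, with failure probability $O(N^{-\theta})$, where the $B_i$ are independent Brownian motions. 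Since the bounded simple control depends on finitely many observations, we also need a Lipschitz or continuity-set argument: it must be stable under $O(\delta_N)$ perturbations except on a set of small probability. Then the empirical measure over $i\le M$ of the split-time distances concentrates, by independence across rows and Hoeffding. So with probability $1-O(N^{-A})$ the empirical $s$th moment of the distances at $k_0=(1-\tau)N$ is at most a small $\beta$.

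\textbf{Step 3: final block.} This is the main obstacle: upgrading an empirical moment bound to an $\ell^\infty$ bound over all $M$ coordinates, with polynomially small failure. On $[1-\tau,1]$ we hold a correction drift fixed: the split-time excess divided by $\tau$, which has small empirical $L^s$ norm. Added to it is a F\"ollmer drift confining the fresh post-split fluctuation to an interval of width $\eta$, again in the form of \cref{app:Follmer}. Its drift magnitude stays within the remaining budget once $\beta$ is small compared with $\tau$. The F\"ollmer feedback is dissipative, so the Euler scheme driven by $\widehat B$ absorbs the coupling errors $O(\delta_N)$ and the bias without amplification. Because the post-split Gaussians are independent of the pre-split history, we can apply the Euler estimates of \cref{app:Follmer} row by row and take a union bound over $M$ rows using exponential tails of the confinement failure. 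The check events pass with probability $1-O(N^{-A})$ by \cref{lem:delocalization}.

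\textbf{Step 4: the expectation term.} On the exceptional event the auxiliary bounded drift keeps \cref{prop:iterated-coupling} valid, so $D^{\mathrm{tr}}$ has moments of every order. These are bounded by a constant times $\|\max|\widehat B|\|_{L^{2s}}$ plus bounded drift, which is $O(\sqrt{\log N})$. Cauchy--Schwarz with the $O(N^{-2A})$ failure probability then bounds $\E(D^{\mathrm{tr}}-\kappa_0)_+$. The parameters, taking $s$ large and $\theta\ge 2A$, are chosen uniformly over $\cM_4(K)$ via \eqref{eq:truncation-moments}.
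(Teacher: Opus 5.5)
Your outline has the same architecture as the paper. The paper proves this proposition by fixing $a\in(R_\alpha,\kappa_0)$, $\theta=2A+3$, the budgets $c_0,\gamma,c_{\mathrm{corr}},c_F$, a rational $h$ and $c_\beta$, and then invoking \cref{prop:upper}; your Steps~1--4 retrace the proof of \cref{prop:upper}.

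\textbf{The main gap is in Step~3.} Dissipativity (\cref{lem:dissipative-recursion}) controls the \emph{state} error $\norm{U_j-\widehat Z_j}_\infty$. The admissibility tests, however, concern the \emph{drift} $r^{\mathrm{corr}}+v_h(t,U_j)$. The energy and $L^s$ tests, and the $L^s$ hypothesis of \cref{lem:delocalization}, need empirical norms of the actual feedback. You can only get these by comparing $v_h(t,U_j)$ with the ideal $v_h(t,\widehat Z_j)$. The Euler estimates of \cref{app:Follmer} you cite compare the ideal scheme with the diffusion, not the actual feedback with the ideal one.

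The only Lipschitz bound available is $|\partial_xv_h(t,\cdot)|\le(h-t)^{-1}$ from \eqref{eq:Follmer-derivative}, and it degenerates at the terminal time. The coupling errors are only known to be of order $N^{-(s-4)/(6s)}$, far above $1/N$. So in the last steps the drift discrepancy is bounded only by a positive power of $N$, and nothing in your argument prevents the energy or delocalization test from failing. After a failure the algorithm abandons the correction and the confinement, so \eqref{eq:terminal-identity} is lost.

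The paper's remedy is to switch the F\"ollmer feedback off on the final stretch of length $\delta_N=N^{-1/24}$, as in \eqref{eq:truncated-feedback}. The Lipschitz constant is then at most $\delta_N^{-1}$. By \eqref{eq:coupling-tail-exponent}, the coupling error is at most $\eps_0\delta_N/2$ with probability $1-O(N^{-\theta})$, which gives \eqref{eq:actual-ideal-request-comparison}. The cost of the switch-off is the synchronous bound \eqref{eq:truncated-full-Lp}. This yields the tail \eqref{eq:Euler-terminal-tail} and is why $p$ must be large relative to $\theta$.

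Three smaller points also need repair.

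\emph{Step~1.} Stopping the $[0,1]$-optimal control at $1-\tau$ (your $\tau$ is the paper's $h$) bounds the split-time deficit only by the drift still to be applied on $[1-\tau,1]$. That bound is $O(\tau)$, not $o(\tau)$, whereas Step~3 needs $\beta\ll\tau$. Instead, solve the problem on the compressed horizon $T=1-\tau$ with budget $c/\sqrt T<\ca-2\gamma$, whose radius is $\sqrt T\,\mathcal R(c)<a$. Choose the approximation accuracy only after $\tau$ is fixed; this is \cref{prop:brownian-preparation}, used with tolerance $hc_{\mathrm{corr}}$.

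\emph{Step~4.} The applied drift is not bounded. The terms $r^{\mathrm{corr}}$ and $\bar v_N$ are controlled only through the $L^s$ test, which gives $|r_{k,i}|\le\Lambda M^{1/s}$ as in \eqref{eq:coordinate-request-envelope}, and hence the growing envelope \eqref{eq:global-L2-envelope}. With any growing envelope, Cauchy--Schwarz against $\Pp(\cG_N^c)=O(N^{-2A})$ gives only $N^{-A}$ times that factor. You therefore need $\theta>2A$ strictly; the paper takes $\theta=2A+3$.

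\emph{Concentration.} Hoeffding does not apply to the unbounded variables $d_a(Y^\beta_{T,i})^s$. Use Rosenthal with a large exponent, as in \cref{lem:empirical-moments}.
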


To deduce the quantitative theorem, choose
$\kappa_0\in(R_\alpha,\kappa)$ and set $g=\kappa-\kappa_0$.
Let $T_N$ be the maximum of the unsigned tail sums in
\eqref{eq:tail-envelope}.  The pathwise bound
$D_{M,N}\le D^{\mathrm{tr}}_{M,N}+T_N$ bounds the failure probability and
expected excess at $\kappa$ by the corresponding quantities for
$D^{\mathrm{tr}}_{M,N}$ above $\kappa_0$ and $T_N$ above $g$, added together.
The proposition bounds the first contribution by $C_A N^{-A}$, while
\eqref{eq:tail-fixed-gap} bounds the second by
$C_{g,K}\mathfrak t_\mu(N^{1/3})$.  The tail estimate and the full
deduction are proved in \cref{sec:tail-transfer}.

We now construct the algorithm in the proposition.  We first choose a
Brownian control satisfying a strict energy bound and a F\"ollmer drift for
the final block, then give the complete algorithm and its analysis.
\Cref{sec:truncated-assembly} completes the control choices.
For a desired failure exponent $\theta>0$, choose even integers $s,p$ with
\begin{equation}
s>\max\left\{12,\frac2\theta,8\theta+\frac{16}{3}\right\},
\qquad p>2s(1+\theta).
\label{eq:moment-choices}
\end{equation}
The exponent $s$ controls the truncated coupling and empirical drift
norms; $p$ supplies higher moments for the Brownian confinement step.
The cutoff tolerance is $\delta_N=N^{-1/24}$ as in \eqref{eq:deltaN}.
\Cref{app:parameters} verifies every resulting probability exponent.

\subsection{Choice of the Brownian control}
\label{sec:controls-slack}

\begin{proposition}
\label{prop:FJLNW-approximants}
Fix $c>0$, $1\le p<\infty$, and $T\in(0,1]$.  The optimal terminal
radius on horizon $T$ with pointwise $L^2$ drift bound $c$ is
\begin{equation}
r_{c,T}:=\sqrt T\,\mathcal R(c\sqrt T).
\label{eq:optimizer-scaled}
\end{equation}
There are bounded controls $u^{(n)}$ on $[0,T]$, progressively measurable
with respect to the natural Brownian filtration,
such that $\|u_t^{(n)}\|_{L^2}\le c$ for a.e.\ $t$, and, writing
$Y_T^{(n)}=B_T+\int_0^T u_t^{(n)}\,\dd t$,
\begin{equation}
\big\|(|Y_T^{(n)}|-r_{c,T})_+\big\|_{L^p}\longrightarrow0.
\label{eq:bounded-deficit}
\end{equation}
\end{proposition}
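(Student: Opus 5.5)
The plan has three parts: verify the radius formula by Brownian scaling, start from an optimal control, and then make it bounded while keeping the $L^p$ deficit small.

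\textbf{Scaling.} For the identity \eqref{eq:optimizer-scaled}, set $\widetilde B_s:=T^{-1/2}B_{sT}$ for $s\in[0,1]$, which is a standard Brownian motion generating the same completed filtration after the time change. Given a control $u$ on $[0,T]$ with $\|u_t\|_{L^2}\le c$, define $\widetilde u_s:=\sqrt T\,u_{sT}$. Then
\[
B_T+\int_0^Tu_t\,\dd t=\sqrt T\Big(\widetilde B_1+\int_0^1\widetilde u_s\,\dd s\Big),
\qquad \|\widetilde u_s\|_{L^2}\le c\sqrt T .
\]
The correspondence $u\leftrightarrow\widetilde u$ is a bijection of admissible classes, so the optimal radius on horizon $T$ is $\sqrt T\,\mathcal R(c\sqrt T)$. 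By \cref{prop:brownian-facts}, the infimum defining $\mathcal R(c\sqrt T)$ is attained by a natural-filtration control $u^*$ with $|Y_T^*|\le r_{c,T}$ almost surely.

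\textbf{Bounded approximants.} I would not truncate $u^*$ directly, since truncation preserves the $L^2$ constraint but changes the terminal value. Instead, for $\lambda\in(0,1)$ consider the shrunken control $\lambda u^*$; this gives slack in the energy bound but also moves the terminal point. Two routes seem possible.

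\begin{enumerate}[label=(\alph*),leftmargin=2.4em]
\item Truncate at level $n$: set $u^{(n)}:=u^*\1_{\{|u^*|\le n\}}$, or more generally $\phi_n(u^*)$ with $|\phi_n(x)|\le|x|$. Progressive measurability and $\|u^{(n)}_t\|_{L^2}\le\|u^*_t\|_{L^2}\le c$ are preserved, and the controls are bounded.
\item Work with $\lambda u^*$ first, and only then truncate, using the extra energy slack if needed.
\end{enumerate}

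Route (a) is the simpler one to try first. The terminal error satisfies
\[
\big|Y_T^{(n)}-Y_T^*\big|\le\int_0^T|u^*_t|\1_{\{|u^*_t|>n\}}\,\dd t .
\]
This tends to zero in $L^1(\Omega)$ by dominated convergence, because $\int_0^T\E|u^*_t|\,\dd t\le cT<\infty$. Since $(|Y_T^{(n)}|-r_{c,T})_+\le|Y_T^{(n)}-Y_T^*|$, convergence in $L^p$ follows once the error is bounded in $L^{p'}$ for some $p'>p$. So no Lipschitz-in-$c$ argument is needed: truncation never increases the pointwise $L^2$ norm.

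\textbf{Main obstacle: the $L^p$ upgrade.} The difficulty is that $\int_0^T|u^*_t|\,\dd t$ need not lie in $L^p$ for $p>2$. Convergence in probability plus uniform integrability of the $p$th power would suffice, but the only a priori control on $u^*$ is in $L^2$.

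To get around this, I would use the almost-sure confinement of the optimizer. On the event that the deficit $\int_0^T|u^*|\1_{\{|u^*|>n\}}\,\dd t$ is large, introduce the stopping time $\tau_n$ at which the accumulated truncation error first exceeds $\eta$. After $\tau_n$, replace the control by the bounded F\"ollmer-type correction used for confinement, or simply by a capped version of $u^*$ itself. The controlled process then differs from $Y^*$ by at most $\eta$ plus a term supported on $\{\tau_n<T\}$, and that term has bounded moments because the control after $\tau_n$ is bounded.

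This gives a bound
\[
\big\|(|Y_T^{(n)}|-r_{c,T})_+\big\|_{L^p}\le\eta+C_p\,\Pp(\tau_n<T)^{1/(2p)} .
\]
Here $\Pp(\tau_n<T)\to0$ by the $L^1$ convergence above, and letting $\eta\downarrow0$ along a diagonal sequence yields \eqref{eq:bounded-deficit}. The projection \cref{lem:Brownian-projection} keeps everything adapted to the natural filtration if auxiliary randomness appears in the compactness step.
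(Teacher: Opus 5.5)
Your scaling computation and the reduction to an attained natural-filtration optimizer $u^*$ are fine. Route (a) in fact proves the proposition for $p\le2$: by Cauchy--Schwarz in time, $\E\bigl|\int_0^Tu^*_t\1_{\{|u^*_t|>n\}}\dd t\bigr|^2\le T\int_0^T\E[(u^*_t)^2\1_{\{|u^*_t|>n\}}]\dd t\to0$, so you get $L^2$ convergence, not just $L^1$.

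\textbf{The gap is the case $p>2$.} This is the case the paper needs: exponent $s>12$ in \cref{prop:brownian-preparation}. Your stopping-time step does not give $\eta+C_p\Pp(\tau_n<T)^{1/(2p)}$ with $C_p$ independent of $n$. The term you describe as supported on $\{\tau_n<T\}$ contains $-\int_{\tau_n}^Tu^*_t\dd t$, the untruncated tail of the original drift. Bounding the deficit through $|\widetilde Y_T|$ instead does not help. For any bounded $w$ used after $\tau_n$, on $\{\tau_n<T\}$ you have $\widetilde Y_T=Y^*_{\tau_n}+O(\eta)+(B_T-B_{\tau_n})+\int_{\tau_n}^Tw_t\dd t$. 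So the needed $L^{2p}$ bound is governed by the optimal path at a random time, that is, by $\sup_{t\le T}|Y^*_t|$. Boundedness of the control after $\tau_n$ is irrelevant to this. The available information (the pointwise budget and $|Y^*_T|\le r_{c,T}$) yields only $\E\sup_t|Y^*_t|^2<\infty$, via $\E(\int_0^T|u^*_t|\dd t)^2\le c^2T^2$. The alternative bound $nT$ on the truncated pre-$\tau_n$ drift grows with $n$, while $\Pp(\tau_n<T)\to0$ at no quantified rate, so H\"older cannot close the estimate. The two options you offer after $\tau_n$ also fail. Keeping the capped $u^*$ gives literally the route-(a) control, so nothing is gained. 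A F\"ollmer-type correction would have to confine from an uncontrolled position in a possibly tiny remaining time $T-\tau_n$. That needs unbounded drift and competes for the budget $\E u_t^2\le c^2$, which is shared across events.

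\textbf{What is missing is a higher-integrability input.} Suppose some optimizer had $\sup_{t\le T}|Y^*_t|\in L^q$ for every $q<\infty$. Then your argument with zero drift after $\tau_n$ would go through by uniform integrability; it respects the budget because the new control is dominated by $|u^*|$. However, \cref{prop:brownian-facts} gives no such regularity. The paper never uses attainment. It takes bounded controls $a^{(n)}$ from Lemma~4.15 of Fiedler, Jackson, Lacker, and Niles-Weed, combined with their diffusion representation and Brownian scaling. These controls are adapted to a possibly larger filtration in which the driving process is Brownian, and their terminal laws converge in $\mathbf d_p$ to a law $m_T$ supported on $[-r_{c,T},r_{c,T}]$. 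Projecting onto the natural filtration (\cref{lem:Brownian-projection}) preserves boundedness and the pointwise budget. Conditional Jensen for the convex map $x\mapsto(|x|-r_{c,T})_+^p$, together with an optimal coupling with $m_T$, then gives $\E(|Y^{(n)}_T|-r_{c,T})_+^p\le\mathbf d_p(\Law(X^{(n)}_T),m_T)^p\to0$. Thus the control in every finite $L^p$ comes directly from Wasserstein convergence, which is exactly the ingredient your truncation argument lacks.
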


In \cref{app:controls}, we combine the bounded approximation result
of \cite[Lemma~4.15]{FiedlerJacksonLackerNilesWeed26} with its diffusion
representation.  This gives bounded controls adapted to a filtration in
which the driving process is Brownian.  Projection onto its natural
filtration preserves the pointwise $L^2$ bound and, by conditional Jensen,
can only decrease the moment of the distance to the target interval.

We use the simple predictable controls of \cite[Definition~4.2]{JoLee26},
with bounded Lipschitz functions of finitely many observations.  These
can be evaluated coordinate by coordinate from stored observations.

\begin{definition}
\label{def:finite-observation}
For a partition $0=t_0<t_1<\cdots<t_J=T$, we consider bounded simple
predictable controls of the form
\begin{equation}
\beta_t=F_j(B_{r_{j,1}},\ldots,B_{r_{j,m_j}}),
\qquad t\in(t_j,t_{j+1}].
\label{eq:finite-observation-form}
\end{equation}
Here $0\le j<J$, $m_j$ is the number of observations in block $j$,
$0\le r_{j,\ell}\le t_j$, and
$F_j:\R^{m_j}\to\R$ is bounded and globally Lipschitz.
Thus the drift on each block is determined by observations available at
its left endpoint.  A control of this form has a \emph{finite rational
description} if all times are rational and each $F_j$ is affine on the
cells of a finite rational polyhedral partition, with rational affine
coefficients.
\end{definition}

\begin{proposition}
\label{prop:finite-observation}
Let $T>0$ be rational and let $u$ be a bounded control
on $[0,T]$, progressively measurable with respect to the natural Brownian
filtration, satisfying
\[
\esssup_{0<t<T}\norm{u_t}_{L^2}\le c_0.
\]
Fix $c_1>c_0$, $1\le p<\infty$,
and $\eps>0$.  There is a bounded simple predictable control $\beta$
of the form \eqref{eq:finite-observation-form}, with a finite rational
description, such that
\begin{equation}
\esssup_{0<t<T}\norm{\beta_t}_{L^2}<c_1,
\qquad
\left\|\int_0^T(\beta_t-u_t)\,\dd t\right\|_{L^p}<\eps.
\label{eq:finite-observation-approximation}
\end{equation}
\end{proposition}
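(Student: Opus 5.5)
The plan is a three-stage approximation: time discretization, then finite-dimensional Lipschitz approximation of the block values, then rationalization. Each stage costs at most $\eps/3$ in $L^p$ for $\int_0^T(\beta_t-u_t)\dd t$ and at most a fixed fraction of the slack $c_1-c_0$ in the pointwise $L^2$ bound. Let $\norm{u}_\infty\le b$.

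\emph{Step 1: block averages.}
Take a rational partition $0=t_0<\dots<t_J=T$ with mesh $h$. On $(t_j,t_{j+1}]$ set
\[
\bar u_t:=\E\Big[\frac1{t_j-t_{j-1}}\int_{t_{j-1}}^{t_j}u_r\dd r\;\Big|\;\cF^B_{t_j}\Big]
\qquad (j\ge1),
\]
and $\bar u=0$ on the first block. This is the lagged block average. It is $\cF^B_{t_j}$-measurable and bounded by $b$.

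By Jensen and Minkowski, $\norm{\bar u_t}_{L^2}\le\frac1{h}\int\norm{u_r}_{L^2}\dd r\le c_0$. Since $u\in L^p(\dd t\otimes\Pp)$, the standard approximation of progressive processes by lagged block averages gives $\int_0^T\E|\bar u_t-u_t|^p\dd t\to0$ as $h\to0$. Here the conditional expectation is harmless because the average is already $\cF^B_{t_j}$-measurable. Hölder then bounds the $L^p$ norm of $\int(\bar u-u)\dd t$.

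\emph{Step 2: finitely many observations and Lipschitz $F_j$.}
Each block value $Y_j:=\bar u_{t_j+}$ is a bounded $\cF^B_{t_j}$-measurable variable. By martingale convergence along dyadic refinements, $Y_j$ is the $L^2$ and $L^p$ limit of $\E[Y_j\mid B_{r_1},\dots,B_{r_m}]$ over rational grids of $[0,t_j]$. That conditional expectation is a bounded Borel function $f(B_{r_1},\dots,B_{r_m})$. Since the law of the Gaussian vector is Radon, $f$ can be approximated in $L^2\cap L^p$ of that law by bounded Lipschitz functions, clipped at $b$. Conditional expectation does not increase the $L^2$ norm, so the Step 1 bound $c_0$ survives up to an additive error $\eta$ per block.

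\emph{Step 3: rational piecewise-affine $F_j$.}
Replace each Lipschitz $F_j$ by a piecewise-affine interpolant on a fine rational simplicial triangulation of a large cube $[-R,R]^{m}$. Outside the cube, compose with the radial retraction onto the cube, which is Lipschitz. To keep the partition polyhedral, use the cube projection, which is coordinatewise clipping and hence piecewise affine. Then round the vertex values to rationals.

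Error control here is a choice of parameters: uniform convergence on the cube, plus Gaussian tails outside the cube times the bound $b$. Everything is bounded, so all $L^p$ errors are controlled by $L^2$ errors via boundedness. Choose $\eta$ so that $c_0+3\eta<c_1$. There are finitely many blocks, so the pointwise $L^2$ bound is strict. Summing the three errors over $J$ blocks weighted by $h$ gives $\norm{\int(\beta-u)}_{L^p}<\eps$.

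\emph{Main obstacle.}
The only nonroutine point is Step 1, namely keeping the $L^2$ bound pointwise rather than averaged while preserving predictability. The lag-and-condition construction handles it because conditional Jensen and averaging are both $L^2$-contractions. I would verify the $L^p(\dd t\otimes\Pp)$ convergence of lagged averages via density of continuous adapted processes.
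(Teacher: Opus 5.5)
Your proposal is correct and follows essentially the same three-stage route as the paper. Both proofs use a block-constant predictable approximation whose pointwise $L^2$ cap $c_0$ is preserved by Minkowski, then bounded Lipschitz cylinder functions of finitely many rational-time observations (clipped), then a rational piecewise-affine approximation, with the finitely many block errors absorbed into the slack $c_1-c_0$. The only difference is in Step 1: you use the lagged block average $h^{-1}\int_{t_{j-1}}^{t_j}u_r\,\dd r$, which is already $\cF^B_{t_j}$-measurable and converges pathwise by translation continuity. The paper instead uses the orthogonal projection $\E\bigl[\Delta_{j,n}^{-1}\int_{t_{j,n}}^{t_{j+1,n}}u_r\,\dd r\,\big|\,\cF^B_{t_{j,n}}\bigr]$ onto block-constant predictable processes along dyadic partitions.
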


We prove the approximation for interval constraints in
\cref{app:controls}, preserving the uniform-in-time $L^2$ budget.
The corresponding result for nonnegative controls and a one-sided
terminal constraint is \cite[Proposition~4.3]{JoLee26}.

We next obtain the Brownian control used before the split.  For
$x\in\R$ and $a>0$, write
\begin{equation}
d_a(x):=\dist(x,[-a,a])=(|x|-a)_+.
\label{eq:interval-deficit}
\end{equation}
The corresponding projection is
$\Pi_{[-a,a]}(x):=\min\{a,\max\{-a,x\}\}$, applied coordinatewise to
vectors.  In particular, the magnitude of each projection correction
is its distance to the interval.

\begin{proposition}
\label{prop:brownian-preparation}
Fix $\alpha>0$, a finite $s\ge2$, and $a>R_\alpha$.  There are constants
$c_0<\ca$ and $h_0>0$ such that the following holds.  For every rational
$h\in(0,h_0)$, every
\begin{equation}
\frac{c_0}{\sqrt{1-h}}<c_\beta<\ca,
\label{eq:prep-budget-window}
\end{equation}
and every $\eps>0$, there is a bounded simple predictable control
$\beta$ on $[0,T]$, $T=1-h$, with a finite rational description and
\begin{equation}
\esssup_{0<t<T}\norm{\beta_t}_{L^2}<c_\beta,
\qquad
\norm{d_a(Y_T^\beta)}_{L^s}<\eps,
\label{eq:prep-output}
\end{equation}
where
\[
Y_t^\beta=B_t+\int_0^t\beta_r\,\dd r,\qquad 0\le t\le T.
\]
All displayed inequalities may be given fixed positive slack.
\end{proposition}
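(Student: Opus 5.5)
Since $a>R_\alpha=\mathcal R(\ca)$ and $\mathcal R$ is continuous and nonincreasing on $(0,\infty)$ by \cref{prop:brownian-facts}, we first pick $c_0<\ca$ close to $\ca$ with $\mathcal R(c_0)<a$. The horizon $T=1-h$ is handled by Brownian scaling. By \eqref{eq:optimizer-scaled}, the optimal radius on $[0,T]$ with pointwise budget $c$ is $r_{c,T}=\sqrt T\,\mathcal R(c\sqrt T)$. For $c>c_0/\sqrt{1-h}$ we have $c\sqrt T>c_0$, so monotonicity gives $\mathcal R(c\sqrt T)\le\mathcal R(c_0)$ and hence $r_{c,T}\le\sqrt T\,\mathcal R(c_0)\le\mathcal R(c_0)<a$. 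We choose $h_0$ small enough that the window \eqref{eq:prep-budget-window} is nonempty, i.e.\ $c_0/\sqrt{1-h_0}<\ca$. Given $c_\beta$ in the window, we also fix an intermediate budget $c'$ with $c_0/\sqrt{1-h}<c'<c_\beta$. This leaves strict slack for the later approximation step and still gives $r_{c',T}<a$.

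Next we apply \cref{prop:FJLNW-approximants} with budget $c'$, horizon $T$ and exponent $p=s$. It yields bounded natural-filtration controls $u$ with $\|u_t\|_{L^2}\le c'$ and $\|(|Y^u_T|-r_{c',T})_+\|_{L^s}$ arbitrarily small. Since $r_{c',T}<a$, we have $d_a(Y^u_T)\le(|Y_T^u|-r_{c',T})_+$ pointwise, so $\|d_a(Y_T^u)\|_{L^s}<\eps/2$. We then apply \cref{prop:finite-observation} with $c_0\leftarrow c'$, $c_1\leftarrow c_\beta$ and tolerance $\eps/2$. It produces a bounded simple predictable $\beta$ with a finite rational description, with $\esssup_t\|\beta_t\|_{L^2}<c_\beta$, and with $\|\int_0^T(\beta_t-u_t)\,\dd t\|_{L^s}<\eps/2$. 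Rationality of $T$ is needed for the hypotheses of \cref{prop:finite-observation}, and this is where we use that $h$ is rational.

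Since $d_a$ is $1$-Lipschitz and $Y^\beta_T-Y^u_T=\int_0^T(\beta_t-u_t)\,\dd t$, the triangle inequality in $L^s$ gives $\|d_a(Y^\beta_T)\|_{L^s}<\eps$. This is \eqref{eq:prep-output}. The remark on fixed positive slack is handled the same way: shrink $c_0$'s margin and replace $\eps$ by $\eps/2$, or $c_\beta$ by a slightly smaller value in the window. All choices remain strict inequalities.

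The main obstacle is not this assembly but the inputs. The first is the bounded approximation in \cref{prop:FJLNW-approximants} on a shortened horizon, which requires the projection onto the natural filtration to preserve both the pointwise budget and the deficit moment. The second is \cref{prop:finite-observation}, where the uniform-in-time budget must survive discretization. In the present argument the only delicate points are keeping every inequality strict, so that a strict budget $c_\beta<\ca$ is available, and verifying $r_{c',T}<a$ uniformly in $h<h_0$, which follows from the monotonicity argument above.
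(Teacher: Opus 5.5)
Your proof is correct and follows essentially the same route as the paper: you choose $c_0$ by continuity with $\mathcal R(c_0)<a$, apply \cref{prop:FJLNW-approximants} on the horizon $T=1-h$, then \cref{prop:finite-observation}, and conclude by the $1$-Lipschitz property of $d_a$. The only difference is cosmetic: the paper uses the budget exactly $c_0/\sqrt T$, so the radius is exactly $\sqrt T\,\mathcal R(c_0)<a$, whereas you insert an intermediate budget $c'$ and use monotonicity of $\mathcal R$; both leave the strict slack needed below $c_\beta$.
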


\begin{proof}
Continuity of $c\mapsto\mathcal R(c)$
(\cref{prop:brownian-facts}) and $\mathcal R(\ca)=R_\alpha<a$ give a
constant $c_0<\ca$ such that
\begin{equation}
\mathcal R(c_0)<a.
\label{eq:choose-c0-a}
\end{equation}
Choose $h_0>0$ so small that, for every $h<h_0$ and $T=1-h$,
\begin{equation}
\frac{c_0}{\sqrt T}<\ca,
\qquad
\sqrt T\,\mathcal R(c_0)<a.
\label{eq:time-compression-slack}
\end{equation}
Apply \cref{prop:FJLNW-approximants} on the rational horizon $T$ with
budget $c_0/\sqrt T$.  Its terminal radius is
\[
r_{c_0/\sqrt T,T}=\sqrt T\,\mathcal R(c_0)<a.
\]
It supplies a bounded admissible control $\widetilde u$ such that
\begin{equation}
\|d_a(Y_T^{\widetilde u})\|_{L^s}
\le\big\|(|Y_T^{\widetilde u}|-\sqrt T\,\mathcal R(c_0))_+\big\|_{L^s}
\label{eq:deficit-Wp-bound}
\end{equation}
is as small as needed.  Finally apply \cref{prop:finite-observation},
with approximation accuracy small compared with the gap in
\eqref{eq:prep-budget-window} and with $\eps$.

Since $d_a$ is one-Lipschitz, the resulting simple predictable control
satisfies \eqref{eq:prep-output}.
\end{proof}

\subsection{Confining the final block}
\label{sec:confinement}

The final block combines a correction toward the target interval with a
drift that confines the new noise in every coordinate.  The correction
drift is determined by the state at the split time and held fixed
throughout the final block.
We use F\"ollmer's construction \cite{Follmer85} in the form developed in
\cite[Lemmas~4.1--4.2]{FiedlerJacksonLackerNilesWeed26} and
\cite[Lemmas~C.1--C.3]{JoLee26}.  We stop the drift shortly before the
terminal time to control its sensitivity to the non-Gaussian coupling
error.

For $h>0$, let $\Gamma_h:=N(0,h)$ and write
\begin{equation}
P_tf(x):=\E[f(x+\sqrt t\,G)]
\label{eq:heat-semigroup}
\end{equation}
for the heat semigroup.

\begin{proposition}
\label{prop:compact-confinement}
Fix an even integer $p\ge4$ and $c_F>0$.  For every sufficiently small
$h>0$ there are an even log-concave density $f_h$ with respect to
$\Gamma_h$, supported on an interval $[-s_p(h,c_F),s_p(h,c_F)]$, and a
feedback
\begin{equation}
v_h(t,x):=\partial_x\log P_{h-t}f_h(x),
\qquad 0\le t<h,
\label{eq:Follmer-feedback}
\end{equation}
such that the strong solution of
\begin{equation}
\dd Z_t=v_h(t,Z_t)\,\dd t+\dd B_t,
\qquad Z_0=0,
\label{eq:Follmer-SDE}
\end{equation}
satisfies
\begin{align}
Z_h&\in[-s_p(h,c_F),s_p(h,c_F)]\quad\text{a.s.},
\label{eq:Follmer-support}\\
\sup_{0\le t<h}\norm{v_h(t,Z_t)}_{L^p}&\le \frac{c_F}{4},
\label{eq:Follmer-score-budget}\\
-\frac1{h-t}\le\partial_xv_h(t,x)&\le0,
\qquad 0\le t<h.
\label{eq:Follmer-derivative}
\end{align}
The factor $1/4$ leaves room for discretization: the Euler drifts below
have $L^p$ norm at most $c_F/2$ for all sufficiently large $N$.  Moreover,
\begin{equation}
s_p(h,c_F)\longrightarrow0\qquad(h\downarrow0).
\label{eq:confinement-radius-vanishes}
\end{equation}
\end{proposition}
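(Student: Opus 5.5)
The construction will be a F\"ollmer process whose terminal law is a compactly supported, even, log-concave law $\pi_h=f_h\,\dd\Gamma_h$ on a short interval. Fix a radius $s>0$ (to be sent to $0$ with $h$) and let $\pi_h$ be the law of $\sqrt h\,G$ conditioned on $|\sqrt h\,G|\le s$. That is, $f_h=\1_{[-s,s]}/\Gamma_h([-s,s])$, which is even, log-concave, and supported on $[-s,s]$.

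\textbf{Step 1: the F\"ollmer representation.} The F\"ollmer drift $v_h(t,x)=\partial_x\log P_{h-t}f_h(x)$ gives the strong solution of \eqref{eq:Follmer-SDE} with $\Law(Z_h)=\pi_h$; this yields \eqref{eq:Follmer-support}. Strong existence on $[0,h)$ holds because $v_h(t,\cdot)$ is smooth and Lipschitz for $t<h$.

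\textbf{Step 2: the derivative bounds.} Write $P_{h-t}f_h(x)=\Pp(|x+\sqrt{h-t}\,G|\le s)/\Gamma_h([-s,s])$. This is the convolution of a log-concave function with a Gaussian, hence log-concave by Pr\'ekopa--Leindler, so $\partial_xv_h\le0$. For the lower bound, $\log P_{h-t}f_h(x)+x^2/(2(h-t))$ is the log-Laplace transform of a measure, hence convex. Therefore $\partial_x v_h\ge-1/(h-t)$, which is \eqref{eq:Follmer-derivative}.

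\textbf{Step 3: the score budget.} This is the main obstacle. It is what forces the choice of $s$ relative to $h$. The aim is to choose $s=s_p(h,c_F)$ with $s\to0$ but $s/\sqrt h\to\infty$. The conditioning then becomes negligible and the drift is small except near the end.

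By the martingale property of $v_h(t,Z_t)$ (the F\"ollmer drift is a martingale along the process), the function $t\mapsto\norm{v_h(t,Z_t)}_{L^p}$ is nondecreasing. So it suffices to bound it as $t\uparrow h$. The limit of this quantity is controlled by the terminal score: by the martingale convergence and conditional Jensen, $v_h(t,Z_t)=\E[\text{``}\nabla\log\text{''}\mid\cF_t]$, where the relative entropy $H(\pi_h\mid\Gamma_h)=-\log\Gamma_h([-s,s])\to0$.

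For $L^p$ with $p>2$, entropy alone is not enough, so I would compute directly. Write $\tau=h-t$. Then $v_h(t,x)=\big(\phi((s-x)/\sqrt\tau)-\phi((s+x)/\sqrt\tau)\big)\big/\big(\sqrt\tau\,\Pp(|x+\sqrt\tau G|\le s)\big)$, and the law of $Z_t$ is that of $Z_h+\sqrt{\tau t/h}\,G$ type (a Brownian bridge from $0$ to $Z_h\sim\pi_h$). The drift is of size $O(1/\sqrt\tau)$ only when $Z_t$ lies within $O(\sqrt\tau)$ of $\pm s$. Since $\pi_h$ has density of order $\phi(s/\sqrt h)/\sqrt h$ near $\pm s$, this bridge event has probability $O(\sqrt\tau\,\phi(s/\sqrt h)/\sqrt h)$. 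This yields $\E|v_h|^p\lesssim \tau^{(1-p)/2}\phi(s/\sqrt h)/\sqrt h$ plus a bulk term of order $\big(s\,\phi(s/\sqrt h)/h\big)^p$.

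The boundary term blows up as $\tau\to0$ for $p>1$. This is exactly why the paper stops the drift before time $h$ and only needs $\sup_{0\le t<h}$ in \eqref{eq:Follmer-score-budget}. If the supremum genuinely includes $t\uparrow h$, I would instead replace the hard indicator by a smoothed log-concave density $f_h\propto\exp(-V(x/s))$, with $V$ convex, even, and steep but finite near $\pm1$, and compactly supported. One might use $V(y)=-\log(1-y^2)^{p+1}$, which vanishes at $\pm1$ to high order. This keeps the score near the boundary integrable to order $p$, and the bound reduces to a tail integral of $\phi(s/\sqrt h)$ times powers of $s/h$.

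Choosing $s=\sqrt{h}\,\sqrt{C\log(1/h)}$ with $C$ large makes every term at most $(c_F/4)^p$ for $h$ small, while keeping $s\to0$. This gives \eqref{eq:confinement-radius-vanishes}. Log-concavity and the derivative bounds from Step 2 are preserved for this smoothed choice, since $e^{-V}$ is log-concave.
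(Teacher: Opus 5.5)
Your Steps 1 and 2 are fine; the log-Laplace argument for $\partial_xv_h\ge-1/(h-t)$ is equivalent to the paper's variance identity \eqref{eq:log-heat-Hessian}. Step 3 has a genuine gap, because neither of your densities satisfies \eqref{eq:Follmer-score-budget} for small $h$.

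\textbf{The hard indicator.} You already see that $f_h=\1_{[-s,s]}/\Gamma_h([-s,s])$ gives $\sup_{t<h}\|v_h(t,Z_t)\|_{L^p}=\infty$ for $p>1$. The paper's later cutoff does not make this harmless. The Euler bound \eqref{eq:Euler-error-final} is driven by the terminal score $\|\mathcal V_{h_N}\|_{L^p}$, so the statement really requires $s_{f_h}\in L^p(f_h\Gamma_h)$.

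\textbf{The smoothed bump.} Your fallback $f_h\propto(1-x^2/s^2)^{p+1}$ also fails, for a reason your heuristic misses: this density is not flat in the bulk. Its score relative to $\Gamma_h$ is $s_f(x)=-2(p+1)x/(s^2-x^2)$. Hence
\[
\int|s_f|^pf_h\,\dd\Gamma_h=\frac{(2(p+1))^p}{Z\,s^{2p}}\int_{-s}^{s}|x|^p\Bigl(1-\frac{x^2}{s^2}\Bigr)\Gamma_h(\dd x),\qquad Z=\int\Bigl(1-\frac{x^2}{s^2}\Bigr)^{p+1}\dd\Gamma_h\to1 .
\]
For $s\gg\sqrt h$ this is asymptotic to $(2(p+1))^p\,\E|G|^p\,h^{p/2}/s^{2p}$. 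With your radius $s^2=Ch\log(1/h)$, you get $\|s_f\|_{L^p(f_h\Gamma_h)}\asymp 1/(C\sqrt h\log(1/h))\to\infty$.

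Since $v_h(t,Z_t)=\E[s_f(Z_h)\mid\cF_t]$ converges in $L^p$ to $s_f(Z_h)$, the supremum in \eqref{eq:Follmer-score-budget} equals this divergent quantity. So the estimate does not reduce to a Gaussian tail of $\phi(s/\sqrt h)$. The curvature of $V$ at the origin contributes a bulk term of size $(\sqrt h/s^2)^p$, because the score is nonzero exactly where $\Gamma_h$ puts its mass.

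\textbf{The missing idea.} The score must vanish on the bulk, and the power must cancel the boundary singularity. The paper takes $f_h=H_h^p/\int H_h^p\,\dd\Gamma_h$, where $H_h$ is the flat-topped trapezoid: it equals $1$ on $[-r_h/2,r_h/2]$ and is linear down to $0$ on $r_h/2\le|x|\le r_h$. Then $s_{f_h}=pH_h'/H_h$ vanishes on the bulk, and
\[
|s_{f_h}|^pf_h=\frac{p^p|H_h'|^p}{\int H_h^p\,\dd\Gamma_h},
\]
which is bounded. This quantity is at most $2(2p/r_h)^p$, and only on a set of $\Gamma_h$-mass at most $2e^{-r_h^2/(8h)}$. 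The radius $r_h=2\sqrt{ph\log(4\varpi^{-2}/h)}$, with $\varpi=c_F/(4p)$, makes the integral at most $(c_F/4)^p$. Conditional Jensen along the score martingale then gives the bound for every $t<h$, and no bridge computation is needed.

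Alternatively, your bump can be rescued by taking $s=Ah^{1/4}$ with $A$ large depending on $(p,c_F)$. That radius still tends to zero and makes the bulk term $\lesssim(2(p+1))^p\E|G|^p/A^{2p}$. It does not work with the radius $\sqrt{h\log(1/h)}$ you propose.
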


\Cref{app:Follmer} constructs the density from an even, flat-topped
piecewise-linear cutoff and proves the proposition, including
localization at the zeros of $f_h$.

The derivative bound says that $v_h(t,\cdot)$ is nonincreasing; in
particular, it is dissipative.  If $\Delta=1/N$ and $h-t\ge\Delta$, the
Euler update $x\mapsto x+\Delta v_h(t,x)$ is nondecreasing and
one-Lipschitz.  We use the following recursion bound from
\cite[Lemma~C.4]{JoLee26}.

\begin{lemma}
\label{lem:dissipative-recursion}
Fix an integer $L\ge1$.  Let $e_0=\mathcal P_0=0$ and suppose,
for $0\le j<L$,
\begin{equation}
e_{j+1}=\vartheta_je_j+(\mathcal P_{j+1}-\mathcal P_j),
\qquad 0\le\vartheta_j\le1.
\label{eq:dissipative-recursion}
\end{equation}
Then
\begin{equation}
\max_{j\le L}|e_j|\le2\max_{j\le L}|\mathcal P_j|.
\label{eq:dissipative-bound}
\end{equation}
\end{lemma}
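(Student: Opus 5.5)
We argue by induction on $j$, keeping track of the discrepancy $e_j-\mathcal P_j$ between the error and the accumulated forcing. Because the recursion is linear, we can unroll it directly. Set $\Theta_{j,i}:=\prod_{l=i}^{j-1}\vartheta_l$ for $i\le j$, with the empty product equal to $1$. Since $e_0=0$, we get
\[
e_j=\sum_{i=0}^{j-1}\Theta_{j,i+1}\,(\mathcal P_{i+1}-\mathcal P_i).
\]
Summing by parts turns this into a combination of the values $\mathcal P_i$ themselves. With $\mathcal P_0=0$,
\[
e_j=\mathcal P_j-\sum_{i=1}^{j-1}\bigl(\Theta_{j,i+1}-\Theta_{j,i}\bigr)\mathcal P_i,
\]
where we used $\Theta_{j,j}=1$.

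The key observation is that the weights $w_i:=\Theta_{j,i+1}-\Theta_{j,i}=\Theta_{j,i+1}(1-\vartheta_i)$ are nonnegative, because $0\le\vartheta_l\le1$. Their sum telescopes:
\[
\sum_{i=1}^{j-1}w_i=\Theta_{j,j}-\Theta_{j,1}=1-\Theta_{j,1}\le1.
\]
It follows that $|e_j|\le|\mathcal P_j|+\bigl(\sum_i w_i\bigr)\max_{i<j}|\mathcal P_i|\le2\max_{i\le L}|\mathcal P_i|$ for every $j\le L$. This is \eqref{eq:dissipative-bound}.

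The only step that needs care is the index bookkeeping in the summation by parts. One should check it on $j=1$, where it gives $e_1=\mathcal P_1$, and on $j=2$, where it gives $e_2=\vartheta_1\mathcal P_1+\mathcal P_2-\mathcal P_1=\mathcal P_2-(1-\vartheta_1)\mathcal P_1$; both agree with the formula. There is an alternative route that avoids the sums altogether. By induction, $e_j-\mathcal P_j$ is a convex combination of $0$ and $-\mathcal P_1,\ldots,-\mathcal P_{j-1}$. Indeed,
\[
e_{j+1}-\mathcal P_{j+1}=\vartheta_j(e_j-\mathcal P_j)-(1-\vartheta_j)\mathcal P_j,
\]
which is again such a convex combination. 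This bounds $|e_j-\mathcal P_j|\le\max_i|\mathcal P_i|$, and hence $|e_j|\le2\max_i|\mathcal P_i|$.
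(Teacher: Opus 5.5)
Your proof is correct and matches the paper's: you unroll the recursion and sum by parts to get $e_j=\mathcal P_j-\sum_{i=1}^{j-1}w_i\mathcal P_i$, and you observe that the weights $w_i=\Theta_{j,i+1}(1-\vartheta_i)$ are nonnegative with total mass $1-\Theta_{j,1}\le1$. The inductive convex-combination variant you add, via $e_{j+1}-\mathcal P_{j+1}=\vartheta_j(e_j-\mathcal P_j)-(1-\vartheta_j)\mathcal P_j$, is also valid and avoids the index bookkeeping.
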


\begin{proof}
For $k\le j$, write
$\vartheta_{k:j}=\prod_{m=k}^{j-1}\vartheta_m$.  Summation by parts gives
\[
e_j=\mathcal P_j-\sum_{k=1}^{j-1}
(\vartheta_{k+1:j}-\vartheta_{k:j})\mathcal P_k.
\]
The coefficients in the sum are nonnegative and have total mass at most one.
\end{proof}

For a final block of $L_N$ steps, let $h_N=L_N/N\to h$.
For all sufficiently large $N$,
$\delta_N<h_N$ and $h_N$ lies in the range of
\cref{prop:compact-confinement}.  Let $j_N^*\in\{0,\ldots,L_N-1\}$
be the largest integer satisfying
\begin{equation}
j_N^*/N\le h_N-\delta_N.
\label{eq:cutoff-index}
\end{equation}
Define the truncated feedback
\begin{equation}
\bar v_N(t,x):=
\begin{cases}
v_{h_N}(t,x),&0\le t<j_N^*/N,\\
0,&j_N^*/N\le t\le h_N.
\end{cases}
\label{eq:truncated-feedback}
\end{equation}
The continuous truncated process $\bar Z$ is driven by the same Brownian
motion as the full process $Z$ and agrees with it until the cutoff.  After
the cutoff it uses zero drift.  Therefore
\begin{equation}
\bar Z_{h_N}-Z_{h_N}
=-\int_{j_N^*/N}^{h_N}v_{h_N}(t,Z_t)\,\dd t,
\label{eq:truncated-full-difference}
\end{equation}
and Minkowski's inequality gives
\begin{equation}
\norm{\bar Z_{h_N}-Z_{h_N}}_{L^p}
\le \frac{c_F}{4}(\delta_N+N^{-1}).
\label{eq:truncated-full-Lp}
\end{equation}
In particular, the truncation cost is linear in $\delta_N$; the Brownian
increment over the terminal interval cancels in this synchronous comparison.

Let $B_1,\ldots,B_M$ be independent Brownian motions, and let $\bar Z_i$
be the continuous truncated process driven by $B_i$.  On the same paths,
define the Euler scheme
\begin{equation}
\widehat Z_{0,i}=0,
\qquad
\widehat Z_{j+1,i}
=\widehat Z_{j,i}
+\frac1N\bar v_N(j/N,\widehat Z_{j,i})
+B_i((j+1)/N)-B_i(j/N).
\label{eq:truncated-Euler}
\end{equation}

The next lemma adapts \cite[Lemma~4.5 and Appendix~C.4]{JoLee26} to the
truncated feedback and records the polynomial probability bounds needed
here.

\begin{lemma}
\label{lem:Follmer-Euler}
With the construction of \cref{prop:compact-confinement},
\begin{equation}
\sup_{j<L_N}\norm{\bar v_N(j/N,\widehat Z_{j,i})}_{L^p}
\le \frac{c_F}{2}
\label{eq:Euler-request-Lp}
\end{equation}
for all sufficiently large $N$.  Coupling the Euler scheme with the
continuous truncated process through the same Brownian motion gives
\begin{equation}
\left\|\max_{j\le L_N}
|\widehat Z_{j,i}-\bar Z_i(j/N)|\right\|_{L^p}
\le\frac{C_pc_F}{N}.
\label{eq:Euler-one-row}
\end{equation}
For $M$ independent rows,
\begin{equation}
\left\|\max_{i\le M,\,j\le L_N}
|\widehat Z_{j,i}-\bar Z_i(j/N)|\right\|_{L^p}
\le C_pc_F\frac{M^{1/p}}{N}.
\label{eq:Euler-all-row}
\end{equation}
Before the cutoff, the corresponding drift difference satisfies
\begin{equation}
\left\|\max_{i\le M,\,j\le j_N^*}
|v_{h_N}(j/N,\widehat Z_{j,i})
 -v_{h_N}(j/N,\bar Z_i(j/N))|
\right\|_{L^p}
\le C_pc_F\frac{M^{1/p}}{N\delta_N}.
\label{eq:Euler-request-error}
\end{equation}
Finally, for every $\eps>0$,
\begin{equation}
\Pp\left(
\max_{i\le M}\dist(\widehat Z_{L_N,i},
[-s_p(h_N,c_F),s_p(h_N,c_F)])>\eps
\right)
\le C_{p,\eps}
\left(N^{1-p/24}+N^{1-p}\right).
\label{eq:Euler-terminal-tail}
\end{equation}
\end{lemma}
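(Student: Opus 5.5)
The plan is to exploit dissipativity and compare the Euler scheme and the continuous truncated process synchronously, row by row, through the same Brownian increments. Write $e_j:=\widehat Z_{j,i}-\bar Z_i(j/N)$. Subtracting the integral form of $\bar Z_i$ on $[j/N,(j+1)/N]$ from \eqref{eq:truncated-Euler}, the Brownian increments cancel and
\[
e_{j+1}=e_j+\tfrac1N\bigl[\bar v_N(j/N,\widehat Z_{j,i})-\bar v_N(j/N,\bar Z_i(j/N))\bigr]-\int_{j/N}^{(j+1)/N}\bigl[\bar v_N(t,\bar Z_t)-\bar v_N(j/N,\bar Z_i(j/N))\bigr]\dd t .
\]
By the mean value theorem and \eqref{eq:Follmer-derivative}, the bracketed drift difference equals $-\lambda_j e_j$ with $0\le\lambda_j\le 1/(h_N-j/N)$. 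Before the cutoff we have $h_N-j/N\ge\delta_N\ge 1/N$, so $\vartheta_j:=1-\lambda_j/N\in[0,1]$; after the cutoff $\bar v_N=0$ and $\vartheta_j=1$. This is exactly the recursion \eqref{eq:dissipative-recursion}, with $\mathcal P_j$ the accumulated local time-discretization errors of the continuous process. Lemma~\ref{lem:dissipative-recursion} then reduces \eqref{eq:Euler-one-row} to bounding $\|\max_j|\mathcal P_j|\|_{L^p}$.

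The main obstacle is bounding this local error by $C_pc_F/N$. I would apply Itô's formula to $v_{h_N}(t,\bar Z_t)$. Because $\log P_{h-t}f_h$ solves a Hamilton--Jacobi--heat equation, $v$ satisfies $\partial_tv+\tfrac12\partial_{xx}v+v\partial_xv=0$, so $v(t,Z_t)$ is a martingale along the Föllmer process, with $\dd v=\partial_xv\,\dd B$. Hence $\mathcal P_j$ splits into a martingale part $\sum_k\int_{k/N}^{(k+1)/N}\!\int_{k/N}^t\partial_xv\,\dd B\,\dd t$, plus a single boundary term at the cutoff, where $\bar v_N$ jumps to zero but agrees exactly on the grid. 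Stochastic Fubini rewrites the martingale part as a stochastic integral with integrand bounded by $N^{-1}|\partial_xv|$. Burkholder--Davis--Gundy then bounds it in $L^p$ by $N^{-1}\|(\int_0^{h_N}(\partial_xv)^2\dd t)^{1/2}\|_{L^p}$. The quadratic variation of the martingale $v(t,Z_t)$ is controlled by its terminal $L^p$ norm at time $j_N^*/N$, which is at most $c_F/4$ by \eqref{eq:Follmer-score-budget}; BDG once more gives $C_pc_F/N$. This is the step I expect to require the careful adaptation of \cite[Appendix~C.4]{JoLee26}, in particular checking that stopping at the cutoff introduces no extra boundary term.

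The remaining claims follow from this one. Inequality \eqref{eq:Euler-all-row} comes from a union bound over the $M$ independent rows, $\E\max_i X_i^p\le\sum_i\E X_i^p$. For \eqref{eq:Euler-request-error}, Lipschitz constant $1/\delta_N$ before the cutoff multiplies \eqref{eq:Euler-all-row}. For \eqref{eq:Euler-request-Lp}, the triangle inequality with \eqref{eq:Follmer-score-budget} gives $c_F/4$, plus the one-row version of the request error, $C_pc_F/(N\delta_N)$, which is $o(1)$; this is at most $c_F/2$ for large $N$. For \eqref{eq:Euler-terminal-tail}, combine the support statement \eqref{eq:Follmer-support} for $Z_{h_N}$ with \eqref{eq:truncated-full-Lp} and \eqref{eq:Euler-one-row}. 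The distance is at most $|\bar Z_{h_N}-Z_{h_N}|+|e_{L_N}|$, and Markov's inequality in $L^p$ with a union bound over $M=O(N)$ rows gives $N\,\eps^{-p}C(\delta_N^p+N^{-p})$. This is $N^{1-p/24}+N^{1-p}$, recalling $\delta_N=N^{-1/24}$.
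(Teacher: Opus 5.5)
Your proposal is correct and follows the paper's proof essentially step for step. The common steps are: the synchronous error recursion with $\vartheta_j\in[0,1]$ from \eqref{eq:Follmer-derivative}; the local error written, via It\^o and stochastic Fubini, as a stochastic integral of $\partial_x v_{h_N}$ with a deterministic weight at most $1/N$; BDG and its reverse form, giving $C_pc_F/N$; and then the union bound, the $\delta_N^{-1}$ Lipschitz bound, the triangle inequality, and Markov for the remaining claims.

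Two small remarks. First, since $j_N^*$ is an integer the cutoff is a mesh point, so the boundary term you flag is identically zero. Second, you apply the reverse BDG inequality at time $j_N^*/N$ via \eqref{eq:Follmer-score-budget}, whereas the paper applies it at $h_N$ via the terminal score. Your version is a harmless and slightly more economical variant, because it avoids extending the martingale representation to the terminal time.
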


Along the full F\"ollmer diffusion, $v_{h_N}(t,Z_t)$ is an $L^p$-bounded
martingale; comparing its integral over a mesh interval with its left
endpoint produces an error martingale with integrand of order $1/N$, and
\cref{lem:dissipative-recursion} propagates the error stably.  The complete
calculation is given in \cref{app:Follmer}.

By \eqref{eq:moment-choices}, both terms on the right-hand side of
\eqref{eq:Euler-terminal-tail} are $O(N^{-\theta})$.  Moreover,
\begin{equation}
\frac{M^{1/p}}{N\delta_N}=O(N^{-23/24+1/p})=o(1),
\label{eq:Euler-window-check}
\end{equation}
and the probability that the drift difference in
\eqref{eq:Euler-request-error} exceeds a fixed constant is
$O(N^{-(23p/24-1)})=O(N^{-\theta})$.

\subsection{The algorithm and its computational cost}
\label{sec:online-algorithm}

Fix the control parameters: constants
\begin{equation}
0<a<\kappa,
\qquad
c_\beta,c_{\mathrm{corr}},c_F,\gamma>0,
\qquad
h\in(0,1),
\label{eq:transfer-constants}
\end{equation}
a rational split time $T=1-h$, with $h$ sufficiently small for
\cref{prop:compact-confinement}, and a bounded simple predictable
control $\beta$ on $[0,T]$ with a finite rational description, subject to
\begin{align}
\esssup_{0<t<T}\norm{\beta_t}_{L^2}&<c_\beta,\qquad c_\beta+\gamma<\ca,
\label{eq:transfer-control-budget}\\
c_{\mathrm{corr}}+c_F+\gamma&<\ca,
\label{eq:transfer-repair-budget}\\
\norm{d_a(Y_T^\beta)}_{L^s}&<h c_{\mathrm{corr}},
\label{eq:transfer-deficit}\\
a+s_p(h,c_F)&<\kappa,
\label{eq:transfer-radius}
\end{align}
with $\theta,s,p$ chosen as in \eqref{eq:moment-choices}.
Here $c_\beta$, $c_{\mathrm{corr}}$, and $c_F$ are the budgets for the
Brownian-control drift, the fixed correction, and the confinement drift,
respectively; $\gamma$ is the perturbation amplitude.  Let
\begin{equation}
k_0:=\lfloor TN\rfloor,
\qquad
L_N:=N-k_0,
\qquad
h_N:=\frac{L_N}{N}=h+O(N^{-1}).
\label{eq:k0-hN}
\end{equation}
Choose $\delta>0$ and a fixed compact interval $I_\alpha\subset(0,\infty)$
containing $\alpha$ in its interior so that
$\sqrt\lambda(c_\beta+\gamma)$ and
$\sqrt\lambda(c_{\mathrm{corr}}+c_F+\gamma)$ are both below
$\cstar-3\delta$ for every $\lambda\in I_\alpha$.  Choose a fixed bound $B_{\mathrm{req}}$ for the
empirical $L^s$ norms of the intended drifts on the comparison events
below, including a fixed allowance for comparison errors.  Choose
\[
\Lambda>\max\{B_{\mathrm{req}}+\gamma,
\sqrt2(B_{\mathrm{req}}+\gamma)/\gamma,1\}.
\]
The second term accounts for the lower norm bound in
\cref{lem:delocalization}; it ensures that the delocalization test below
holds on those comparison events.

Each intended drift $b_k$ is perturbed by $\gamma\eta_k$, where $\eta_k$
is a fresh Rademacher vector.  The perturbed drift is used only if it satisfies the
admissibility conditions
\begin{equation}
\frac{\norm{b_k+\gamma\eta_k}_2}{\sqrt N}
\le\cstar-\delta,
\qquad
\left(\frac1M\sum_i|b_{k,i}+\gamma\eta_{k,i}|^s\right)^{1/s}\le \Lambda,
\label{eq:adm-cap-Ls}
\end{equation}
\begin{equation}
\frac{\norm{b_k+\gamma\eta_k}_\infty}
     {\norm{b_k+\gamma\eta_k}_2}
\le \Lambda M^{1/s-1/2},
\label{eq:adm-deloc}
\end{equation}
where the ratio is defined to be zero when the drift vanishes.  If a
condition fails, the algorithm uses the auxiliary Rademacher drift
$\gamma\eta_k$ alone from that time onward.  When $M/N\in I_\alpha$,
this auxiliary drift satisfies
\eqref{eq:adm-cap-Ls}--\eqref{eq:adm-deloc} deterministically:
its normalized magnitude is $\gamma\sqrt{M/N}$, its empirical $L^s$
norm is $\gamma$, and its normalized largest coordinate is $M^{-1/2}$.
Thus \cref{prop:iterated-coupling} applies on every trajectory in this
range of dimensions.

The algorithm maintains the truncated signed state
$\widetilde X_k=N^{-1/2}\sum_{\ell\le k}\eps_\ell\zeta_\ell$,
with discrepancy $D^{\mathrm{tr}}_{M,N}=\norm{\widetilde X_N}_\infty$.
It also maintains a process $W$ approximating a Brownian motion before the
split, and a state $U$ for the fluctuation accumulated in the final block.
For the simple predictable control in \cref{def:finite-observation},
the drift before the split is
\begin{equation}
b_{k,i}=F_j\bigl(W_{\lfloor Nr_{j,1}\rfloor,i},\ldots,
                W_{\lfloor Nr_{j,m_j}\rfloor,i}\bigr),
\qquad t_j<k/N\le t_{j+1}.
\label{eq:causal-mesh-control}
\end{equation}
Every required observation is available before column $k$, since
$\lfloor Nr_{j,\ell}\rfloor\le\lfloor Nt_j\rfloor<k$.
Observations at mesh time $\ell$ are stored after updating $W_\ell$,
and $W_0=0$ is available initially.  Rounding block lengths costs
$O(N^{-1})$ because the control has finitely many bounded blocks.
The correction $r^{\mathrm{corr}}$ is determined by the split-time state
$\widetilde X_{k_0}$ and held fixed throughout the final block.

Fix a deterministic $N_0$, depending only on the fixed control data,
$I_\alpha$, and $K$, large enough for the conditions on the cutoff
and the variance after truncation.  If $N<N_0$ or $M/N\notin I_\alpha$, use independent
fair signs instead.  Thus the algorithm is well defined for every pair $(M,N)$,
and the updates below are used for all sufficiently large $N$ along any sequence
with $M/N\to\alpha$.

\begin{proposition}[Guarantee for the online algorithm]
\label{prop:upper}
Fix $\alpha>0$ and $\theta>0$, choose $s,p$ as in
\eqref{eq:moment-choices}, and let the data satisfy
\eqref{eq:transfer-constants}--\eqref{eq:transfer-radius}.
Then \cref{alg:online} satisfies
\begin{equation}
\Pp(D^{\mathrm{tr}}_{M,N}>\kappa)=O(N^{-\theta}),
\qquad
\E[(D^{\mathrm{tr}}_{M,N}-\kappa)_+]
\le C\left(N^{1/s}+\sqrt{\log N}\right)N^{-\theta/2}.
\label{eq:transfer-quantitative}
\end{equation}
The algorithm depends only on these data and a common fourth-moment
bound $K$.  The bounds are uniform over $\mu\in\cM_4(K)$ for all
sufficiently large $N$ and $M/N\in I_\alpha$, after shrinking $I_\alpha$
if necessary.
\end{proposition}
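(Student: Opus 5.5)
The plan is to compare the truncated state of \cref{alg:online} with an idealized Brownian trajectory, coordinate by coordinate, on a good event of probability $1-O(N^{-\theta})$. Off that event, a crude moment bound controls the expected excess.

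\textbf{Pre-split block.} First I would check that all admissibility tests \eqref{eq:adm-cap-Ls}--\eqref{eq:adm-deloc} pass with high probability, so that the auxiliary drift is never triggered. Since the auxiliary drift is always admissible, \cref{prop:iterated-coupling} applies on every trajectory. This gives \eqref{eq:iterated-decomposition} with bias $O(N^{-1/2+2/s})$ and martingale error whose $L^s$ norm is $O(N^{-(s-4)/(6s)})$. Together with \eqref{eq:all-row-strong-approximation}, each row of $W_k:=\widetilde X_k-N^{-1}\sum_{\ell\le k}r_\ell+N^{-1}\gamma\sum\eta_\ell$ is then within $\delta_N$ of the mesh values of an independent Brownian motion $B_i$, except on an event of probability $O(N^{-\theta})$. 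Here the term $N^{-1}\gamma\sum\eta_\ell$ subtracts only the base drift, and the perturbation contributes $o(1)$ by \eqref{eq:perturbation-terminal}. Because the $F_j$ are bounded and Lipschitz, the requested base drifts $b_{k,i}$ differ from $\beta_{k/N}(B_i)$ by $O(\delta_N)$ uniformly in $i$ and $k$.

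The empirical $L^2$ and $L^s$ norms of $b_k$ are averages over $M$ independent rows. Concentration (a moment or Hoeffding bound over rows, then a union over $k$) therefore gives $\|b_k\|_2/\sqrt N\le\sqrt{M/N}\,(c_\beta+o(1))$ and an empirical $L^s$ norm at most $B_{\mathrm{req}}$. With \eqref{eq:perturbation-energy} and the choice of $\delta$ and $I_\alpha$, this yields the cap. \Cref{lem:delocalization}, together with the choice of $\Lambda$, gives the delocalization test. Consequently $\widetilde X_{k_0,i}=Y_T^\beta(B_i)+O(\delta_N)$ uniformly in $i$. The empirical average of $d_a(\widetilde X_{k_0,i})^s$ is then below $(hc_{\mathrm{corr}})^s$ with high probability, using \eqref{eq:transfer-deficit} and a concentration bound over the independent rows.

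\textbf{Final block.} I set $r^{\mathrm{corr}}_i:=-\bigl(\widetilde X_{k_0,i}-\Pi_{[-a,a]}\widetilde X_{k_0,i}\bigr)/h_N$. Its empirical $L^2$ norm is at most $c_{\mathrm{corr}}$, and its empirical $L^s$ norm is bounded. The algorithm adds the F\"ollmer drift $\bar v_N(j/N,U_{j,i})$, where $U$ tracks the post-split fluctuation: $U$ is the truncated signed increment minus the applied drift plus the feedback. I then apply the post-split coupling to the Brownian increments. The identity $U_{j+1}=U_j+N^{-1}\bar v_N(j/N,U_j)+\Delta B+\text{error}$, together with the monotone one-Lipschitz Euler map and \cref{lem:dissipative-recursion}, gives $\max_{i,j}|U_{j,i}-\widehat Z_{j,i}|\le2\cdot(\text{coupling error})=O(\delta_N)$.

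With \eqref{eq:Euler-request-Lp}, \eqref{eq:Euler-request-error} and row concentration, the requested F\"ollmer drifts have empirical $L^2$ norm at most $c_F$ and empirical $L^s$ norm bounded. By \eqref{eq:transfer-repair-budget}, the tests therefore pass again. At time $N$ this gives
\[
\widetilde X_{N,i}=\Pi_{[-a,a]}\widetilde X_{k_0,i}+\widehat Z_{L_N,i}+O(\delta_N).
\]
Using \eqref{eq:Euler-terminal-tail} and \eqref{eq:transfer-radius}, it follows that $|\widetilde X_{N,i}|\le a+s_p+o(1)<\kappa$ for all $i$ outside an event of probability $O(N^{-\theta})$. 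This proves the probability bound.

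\textbf{Expected excess.} On every trajectory the applied drifts are bounded by $\Lambda M^{1/s}/\sqrt N$ per coordinate per step, so the drift part of $D^{\mathrm{tr}}$ is at most $\Lambda M^{1/s}$ in the appropriate scaling. The $SZ$, $E$ and $Q$ parts have maximal $L^2$ norm $O(\sqrt{\log N})$, by Gaussian maxima and \eqref{eq:all-row-strong-approximation}. Cauchy--Schwarz with the failure probability $O(N^{-\theta})$ then gives $C(N^{1/s}+\sqrt{\log N})N^{-\theta/2}$.

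\textbf{Main obstacle.} I expect the hardest step to be the feedback loop: drifts are computed from approximate states, and one must show that the errors never push a requested drift past the admissibility thresholds. I would handle this with a stopping-time argument. Work up to the first failure time and compare against the idealized Brownian system on that stopped interval. The comparisons are pathwise, and the bias and martingale bounds hold uniformly on every trajectory, so the stopped comparison shows that the failure time is $N$ with high probability.
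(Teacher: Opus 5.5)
Your proposal is correct and follows the paper's proof essentially step for step: strong approximation of $W$ before the split, row concentration for the ideal drifts and for the split-time deficit, the dissipative Euler comparison of \cref{lem:dissipative-recursion} in the final block, a first-failure contradiction made possible by the always-admissible auxiliary drift, and Cauchy--Schwarz against the envelope \eqref{eq:global-L2-envelope}. Two details need tightening: (1) the algorithm's $W$ subtracts the full applied drift $r_k$, so $W_k=N^{-1/2}\sum_{\ell\le k}S_\ell Z_\ell+E_k+Q_k$ exactly, with no perturbation term; (2) comparing $\bar v_N(j/N,U_{j,i})$ with $\bar v_N(j/N,\widehat Z_{j,i})$ relies on the Lipschitz constant $\delta_N^{-1}$ from \eqref{eq:Follmer-derivative}, not on \eqref{eq:Euler-request-error}, so you need the coupling error below $\eps_0\delta_N$ for a fixed $\eps_0$ smaller than the unused drift budget, which \eqref{eq:coupling-tail-exponent} provides, rather than merely $O(\delta_N)$.
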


\begin{algorithmbox}{online vector-balancing algorithm\label{alg:online}}
\emph{Parameters:} the data
\eqref{eq:transfer-constants}--\eqref{eq:k0-hN}, the slack $\delta$, and the
admissibility constants $(\Lambda,s)$; use independent fair signs when
$N<N_0$ or $M/N\notin I_\alpha$.
\begin{enumerate}[label=\arabic*.,leftmargin=2em,itemsep=2pt,topsep=2pt]
\item Initialize $\widetilde X\gets0$, $W\gets0$, and
$\textsf{aux}\gets\textbf{false}$; store $W_0$.
\item For each $k=1,\ldots,N$, perform the following steps.
\begin{enumerate}[label=(\alph*),leftmargin=2em,itemsep=2pt,topsep=2pt]
\item If $k=k_0+1$ and $\textsf{aux}$ is false, set
$r^{\mathrm{corr}}\gets(\Pi_{[-a,a]}(\widetilde X)-\widetilde X)/h_N$
and $U\gets0$.
\par
If $\textsf{aux}$ is true, set $b_k\gets0$.
Otherwise compute the base drift from the past:
\[
b_k\gets
\begin{cases}
\text{the drift in \eqref{eq:causal-mesh-control}},&k\le k_0,\\
r^{\mathrm{corr}}+\bar v_N((k-k_0-1)/N,U),&k>k_0,
\end{cases}
\]
where the feedback is evaluated coordinatewise.
\item Draw a fresh vector $\eta_k\in\{\pm1\}^M$
with independent Rademacher coordinates.  If $\textsf{aux}$ is true or
$b_k+\gamma\eta_k$ fails \eqref{eq:adm-cap-Ls}--\eqref{eq:adm-deloc},
set $r_k\gets\gamma\eta_k$ and $\textsf{aux}\gets\textbf{true}$.
Otherwise set $r_k\gets b_k+\gamma\eta_k$.
\item Observe $\xi_k$, and set
$\zeta_{k,i}=\xi_{k,i}\1_{\{|\xi_{k,i}|\le N^{1/3}\}}$ for every $i$.
If $r_k=0$, choose an independent fair sign.  Otherwise put
$\rho=\norm{r_k}_2/\sqrt N$, $v=r_k/\norm{r_k}_2$, draw a fresh
$U_k'\sim\operatorname{Unif}[-1,1]$, and take $\eps_k=+1$ exactly when
$U_k'\le\psi_\rho(\langle v,\zeta_k\rangle)$.
Output $\eps_k$ irrevocably; this sign applies to the original column $\xi_k$.
\item Update $\widetilde X\gets\widetilde X+\eps_k\zeta_k/\sqrt N$.
For $k\le k_0$, also update
$W\gets W+\eps_k\zeta_k/\sqrt N-r_k/N$ and store every required
observation at mesh time $k$.
For $k>k_0$ with $\textsf{aux}$ still false, update
$U\gets U+\eps_k\zeta_k/\sqrt N-(r^{\mathrm{corr}}+\gamma\eta_k)/N$.
\end{enumerate}
\end{enumerate}
\end{algorithmbox}

All drifts are computed before the current vector is observed.  For fixed
theorem parameters and precomputed control data, each arrival uses
$O(M)$ arithmetic operations and scalar feedback evaluations, with $O(M)$
working memory including the fixed number of observations required by $\beta$.
We count evaluations of the smooth sign function and the logarithmic
derivative of the heat semigroup as scalar operations.  The latter can
be evaluated explicitly: the piecewise-linear cutoff used in
\cref{app:tent}, raised to the fixed even power $p$, is piecewise
polynomial, so its Gaussian convolution and derivative reduce to finitely
many truncated Gaussian moments.  These are expressible using arithmetic,
exponentials, and the Gaussian distribution function.  The finite rational
control $\beta$ is chosen once for the theorem parameters and is common
to the class with the prescribed moment bound.

\subsection{Analysis of the algorithm}
\label{sec:analysis}

We prove \cref{prop:upper}, working on the enlarged probability space that
carries the independent pairs $(S_k,Z_k)$ of \cref{prop:iterated-coupling} and the
Brownian strong approximation \eqref{eq:all-row-strong-approximation},
applied separately before and after the split.

\paragraph{Before the split.}
Let $\tau$ be the first failed test of an intended perturbed drift,
with $\tau=N+1$ if no test fails.  The comparisons below are stopped before this test; \cref{app:stopping}
proves that all tests pass on the basic comparison events.
The update of $W$ subtracts the applied drift, so
\eqref{eq:iterated-decomposition} gives for every $k\le k_0$, including
trajectories that use the auxiliary drift,
\begin{equation}
W_k=\frac1{\sqrt N}\sum_{\ell=1}^k S_\ell Z_\ell+E_k+Q_k.
\label{eq:W-product-decomposition}
\end{equation}
Couple the normalized partial sums of $S_kZ_k$ to independent Brownian
motions on the mesh using \eqref{eq:all-row-strong-approximation}.
Together with \eqref{eq:iterated-bias}--\eqref{eq:iterated-martingale},
this gives
\begin{equation}
\max_{i\le M,\,k\le k_0}|W_{k,i}-B_i(k/N)|
=O_{\Pp}\bigl(N^{-(s-4)/(6s)}\bigr).
\label{eq:W-B-comparison}
\end{equation}
For every fixed $\eps_0>0$, this maximum is at most
$\eps_0\delta_N$ with probability $1-O(N^{-\theta})$, by
\eqref{eq:coupling-tail-exponent}.

Let $Y_i^\beta$ be independent copies of the controlled Brownian path driven
by $B_i$.  The feedbacks in \cref{def:finite-observation} are globally
Lipschitz and depend on finitely many observation times.  The Brownian
modulus of continuity at the rounded observation times
(\cref{app:quantitative}), the comparison \eqref{eq:W-B-comparison}, and a
Riemann-sum estimate give
\begin{equation}
\1_{\{\tau>k_0\}}
\max_{i\le M}|\widetilde X_{k_0,i}-Y_{T,i}^\beta|
=o_{\Pp}(1).
\label{eq:split-time-comparison}
\end{equation}
Here the accumulated perturbation drift is $o_{\Pp}(1)$ by
\eqref{eq:perturbation-terminal}.  More explicitly, before the first
inadmissible time,
\[
\widetilde X_{k_0}=W_{k_0}+
\frac1N\sum_{k\le k_0}b_k
+\frac\gamma N\sum_{k\le k_0}\eta_k,
\]
and each of the three terms is compared with its Brownian counterpart.

The ideal drift before the split is bounded and has pointwise $L^2$ norm
strictly below $c_\beta$.  Concentration across independent coordinates,
over the finitely many feedback blocks, puts its empirical $L^2$ norm
below $c_\beta$ with fixed slack and bounds its empirical $L^s$ norm by
a fixed constant.
Equations \eqref{eq:W-B-comparison} and \eqref{eq:perturbation-energy},
together with \cref{lem:delocalization}, therefore show that all
admissibility conditions before the split hold with probability
$1-O(N^{-\theta})$.

Since $d_a$ is one-Lipschitz, the law of large numbers,
\eqref{eq:transfer-deficit}, and \eqref{eq:split-time-comparison} imply
\begin{equation}
\left(\frac1M\sum_{i=1}^M d_a(\widetilde X_{k_0,i})^s\right)^{1/s}
<h_N(c_{\mathrm{corr}}-\varepsilon_{\mathrm{corr}})
\label{eq:empirical-deficit}
\end{equation}
with probability $1-O(N^{-\theta})$, for some fixed $\varepsilon_{\mathrm{corr}}>0$; the ideal
variables have moments of every order because $\beta$ is bounded, and
Rosenthal's inequality with a sufficiently large moment exponent gives the
polynomial tail
(\cref{app:quantitative}).  By the definition of $r^{\mathrm{corr}}$ in
\cref{alg:online},
\begin{equation}
\left(\frac1M\sum_i|r_i^{\mathrm{corr}}|^s\right)^{1/s}
<c_{\mathrm{corr}}-\varepsilon_{\mathrm{corr}}.
\label{eq:correction-Ls}
\end{equation}

\paragraph{Final correction block.}
On $\{\tau>k_0\}$, condition on the history through time $k_0$ and work on
the event \eqref{eq:empirical-deficit}.  Let $\widehat Z_{j,i}$ be the independent
Euler approximations in \eqref{eq:truncated-Euler} to the F\"ollmer
diffusions with truncated drift, driven by the Gaussian increments
coupled to $S_kZ_k$ for $k>k_0$.  As
long as every drift has been admissible, subtracting
$r^{\mathrm{corr}}+\gamma\eta$ in the update of $U$ and using
\eqref{eq:iterated-decomposition} gives
\begin{equation}
U_{j+1}=U_j+
\frac1N\bar v_N(j/N,U_j)
+\frac{G_{k_0+j+1}}{\sqrt N}
+(\mathcal P_{j+1}-\mathcal P_j).
\label{eq:U-perturbed-Euler}
\end{equation}
Here $U_j$ is the final-block state after $j$ updates, with $U_0=0$,
and $\mathcal P_0=0$.  Starting at time $k_0$, the perturbation
$\mathcal P$ accumulates the predictable bias, the martingale error in the
coupling of $\eps_k\zeta_k$ and $S_kZ_k$, and the Gaussian
strong-approximation error.
Equations \eqref{eq:iterated-bias},
\eqref{eq:iterated-martingale}, and
\eqref{eq:all-row-strong-approximation} give
\begin{equation}
\max_{j\le L_N}\norm{\mathcal P_j}_\infty
=O_{\Pp}\bigl(N^{-(s-4)/(6s)}\bigr).
\label{eq:repair-perturbation}
\end{equation}

Before the cutoff the Euler map is nondecreasing and one-Lipschitz; after
the cutoff it is the identity.  Applying \cref{lem:dissipative-recursion}
coordinatewise yields
\begin{equation}
\max_{\substack{i\le M,\,j\le L_N\\k_0+j<\tau}}
|U_{j,i}-\widehat Z_{j,i}|
\le2\max_{j\le L_N}\norm{\mathcal P_j}_\infty.
\label{eq:U-Euler-comparison}
\end{equation}
Fix a constant $\eps_0>0$ smaller than the remaining drift budget in the final block.
On the event that the right-hand side of \eqref{eq:U-Euler-comparison} is at
most $\eps_0\delta_N/2$, \eqref{eq:Follmer-derivative} gives
\begin{equation}
\max_{\substack{i\le M,\,j\le j_N^*\\k_0+j<\tau}}
|\bar v_N(j/N,U_{j,i})-
  \bar v_N(j/N,\widehat Z_{j,i})|\le\eps_0.
\label{eq:actual-ideal-request-comparison}
\end{equation}
This event has probability $1-O(N^{-\theta})$ by
\eqref{eq:coupling-tail-exponent}.

The ideal drifts in \eqref{eq:Euler-request-Lp} are independent across
coordinates, conditionally on the history through time $k_0$.  Applying Rosenthal's
inequality to $|\bar v_N(j/N,\widehat Z_{j,i})|^s$ and a union bound over
the $O(N)$ final-block times (\cref{lem:empirical-moments}), with
probability $1-O(N^{-\theta})$ their empirical $L^s$ norms are bounded and
their empirical $L^2$ norms are at most $c_F/2+o(1)$.  Combining
\eqref{eq:correction-Ls}, \eqref{eq:actual-ideal-request-comparison}, and
Minkowski's inequality gives
\begin{equation}
\left(\frac1M\sum_i
|r_i^{\mathrm{corr}}+\bar v_N(j/N,U_{j,i})|^2\right)^{1/2}
\le c_{\mathrm{corr}}+c_F+\eps_0+o(1).
\label{eq:repair-empirical-L2}
\end{equation}
The strict budget \eqref{eq:transfer-repair-budget} and the perturbation
energy bound \eqref{eq:perturbation-energy} now show that the magnitude and
moment conditions in \eqref{eq:adm-cap-Ls} hold throughout the final block,
and delocalization holds by \cref{lem:delocalization}.  Thus the first
inadmissible time exceeds $N$ with probability $1-O(N^{-\theta})$.

On this event, summing the update of $U$ and using the definition of
$r^{\mathrm{corr}}$ gives the exact identity
\begin{equation}
\widetilde X_N=\Pi_{[-a,a]}(\widetilde X_{k_0})+U_{L_N}
+\frac\gamma N\sum_{k=k_0+1}^N\eta_k.
\label{eq:terminal-identity}
\end{equation}
By \eqref{eq:U-Euler-comparison} and \eqref{eq:Euler-terminal-tail},
\begin{equation}
\norm{U_{L_N}}_\infty
\le s_p(h_N,c_F)+o_{\Pp}(1).
\label{eq:U-terminal-bound}
\end{equation}
The perturbation term in \eqref{eq:terminal-identity} is $o_{\Pp}(1)$ by
\eqref{eq:perturbation-terminal}.  Since $a+s_p(h,c_F)<\kappa$ and the explicit radius
$s_p(\cdot,c_F)$ in \eqref{eq:tent-radius} is continuous near $h$,
$h_N\to h$ and equations
\eqref{eq:terminal-identity}--\eqref{eq:U-terminal-bound} give
$D^{\mathrm{tr}}_{M,N}\le\kappa$ with probability tending to one.

\paragraph{Exceptional events and integrability.}
Let $\cG_N$ be the intersection of the coupling, rounding,
concentration, perturbation, and confinement events, with errors inside
the fixed slack chosen above.  We then prove admissibility on this
event: the argument at the first failed test in \cref{app:stopping} proves
$\cG_N\subset\{\tau>N\}$ and then the terminal identity gives
$D^{\mathrm{tr}}_{M,N}\le\kappa$ on $\cG_N$.
The five polynomial failure exponents are
\begin{equation}
\frac{s}{8}-\frac23,
\qquad \frac{s}{8}-\frac23,
\qquad \frac p{24}-1,
\qquad \frac{p}{2s}-1,
\qquad \frac{23p}{24}-1,
\label{eq:failure-exponents}
\end{equation}
corresponding respectively to the coupling of $\eps_k\zeta_k$ and $S_kZ_k$,
the Gaussian approximation, switching off the F\"ollmer drift near the
terminal time, the empirical moments of that drift, and the Euler drift
comparison.  All exceed
$\theta$ by \eqref{eq:moment-choices}; the concentration
estimates for bounded pre-split drifts and the perturbation estimates
have exponential tails
(\cref{app:quantitative}).  Hence
\begin{equation}
\Pp(\cG_N^c)=O(N^{-\theta}),
\qquad D^{\mathrm{tr}}_{M,N}\le\kappa\quad\text{on }\cG_N.
\label{eq:good-event-tail}
\end{equation}

It remains to control the loss on $\cG_N^c$.  Because inadmissible drifts
are permanently replaced by the auxiliary Rademacher drift, every applied
drift, on every input, has a uniformly bounded empirical $L^s$ norm and
satisfies the magnitude and delocalization conditions.  Therefore the global
decomposition \eqref{eq:iterated-decomposition} and the strong
approximation \eqref{eq:all-row-strong-approximation} give
\begin{equation}
\norm{D^{\mathrm{tr}}_{M,N}}_{L^2}
\le C\left(N^{1/s}+\sqrt{\log N}\right);
\label{eq:global-L2-envelope}
\end{equation}
the drift contributes at most $CN^{1/s}$ coordinatewise, the maximum of
the normalized partial sums of $S_kZ_k$ is bounded by a Gaussian maximum plus the error from
strong approximation, and the two coupling
remainders are bounded by
\eqref{eq:iterated-bias}--\eqref{eq:iterated-martingale}
(\cref{app:quantitative}).  By Cauchy--Schwarz and
\eqref{eq:good-event-tail},
\begin{align}
\E[(D^{\mathrm{tr}}_{M,N}-\kappa)_+]
&\le\norm{D^{\mathrm{tr}}_{M,N}}_{L^2}\,\Pp(\cG_N^c)^{1/2}
\nonumber\\
&\le C\left(N^{1/s}+\sqrt{\log N}\right)N^{-\theta/2}
=o(1),
\label{eq:L1-excess-proof}
\end{align}
where the final equality is exactly $2/s<\theta$.  This proves
\cref{prop:upper}. \qed

\subsection{Choosing the control parameters}
\label{sec:truncated-assembly}

\begin{proof}[Proof of \cref{prop:truncated-polynomial}]
Set $\theta=2A+3$ and choose even integers $s,p$ satisfying
\eqref{eq:moment-choices}.
Fix $a\in(R_\alpha,\kappa_0)$ and apply
\cref{prop:brownian-preparation} to obtain $c_0<\ca$ and $h_0>0$.
Choose $\gamma>0$ so small that $c_0+4\gamma<\ca$, and choose $c_{\mathrm{corr}},c_F>0$
such that
\begin{equation}
c_{\mathrm{corr}}+c_F+2\gamma<\ca.
\label{eq:assembly-repair-budget}
\end{equation}
Choose a rational $h\in(0,h_0)$ sufficiently small that
\begin{equation}
\frac{c_0}{\sqrt{1-h}}<\ca-2\gamma,\qquad
a+s_p(h,c_F)<\kappa_0.
\label{eq:assembly-radius}
\end{equation}
Take $c_\beta\in(c_0/\sqrt{1-h},\ca-\gamma)$ and use
\cref{prop:brownian-preparation} with tolerance $hc_{\mathrm{corr}}$ for
the $L^s$ distance to $[-a,a]$ at time $T$.
The data satisfy
\eqref{eq:transfer-control-budget}--\eqref{eq:transfer-radius}, with
$\kappa_0$ in place of $\kappa$.  Apply \cref{prop:upper}.
The failure probability is $O(N^{-\theta})$, and
\eqref{eq:L1-excess-proof} bounds the expected excess by
$C(N^{1/s}+\sqrt{\log N})N^{-\theta/2}=O(N^{-A})$.
All strict budget inequalities persist for $M/N$ in a fixed sufficiently
small neighborhood of $\alpha$.
\end{proof}

\subsection{Effect of the discarded entries}
\label{sec:tail-transfer}

Apply the signs constructed for the truncated columns to the original
columns.  The following pathwise comparison holds for every choice of signs.
Set
\begin{equation}
W_{k,i}^{\mathrm{tail}}
 =|\xi_{k,i}|\1_{\{|\xi_{k,i}|>H_N\}},\qquad
T_N=\max_i\frac1{\sqrt N}\sum_{k=1}^N W_{k,i}^{\mathrm{tail}}.
\label{eq:tail-envelope}
\end{equation}
For every signing,
\begin{equation}
|D_{M,N}-D^{\mathrm{tr}}_{M,N}|\le T_N.
\label{eq:pathwise-tail-transfer}
\end{equation}
Let $\mathfrak t=\mathfrak t_\mu(H_N)$ and write
\[
R_N^{\mathrm{tail}}
 =\max_i\left|\frac1{\sqrt N}\sum_{k=1}^N
  (W_{k,i}^{\mathrm{tail}}-\E W_{k,i}^{\mathrm{tail}})\right|.
\]
Since $\E W_{k,i}^{\mathrm{tail}}\le K H_N^{-3}$,
\begin{equation}
T_N\le K N^{-1/2}+R_N^{\mathrm{tail}}.
\label{eq:tail-mean}
\end{equation}
Rosenthal's inequality applied to the independent tail
magnitudes, followed by $\max_i|x_i|^4\le\sum_i|x_i|^4$, gives
\begin{align}
\E(R_N^{\mathrm{tail}})^4
 &\le C\left[
 M\bigl(\E[(W_{1,1}^{\mathrm{tail}})^2]\bigr)^2
       +\frac MN\E[(W_{1,1}^{\mathrm{tail}})^4]\right]\nonumber\\
 &\le C\left[NH_N^{-4}\mathfrak t^2+\mathfrak t\right]
 =C\left[N^{-1/3}\mathfrak t^2+\mathfrak t\right]\le C_K\mathfrak t.
\label{eq:fourth-tail-Rosenthal}
\end{align}
For every fixed $g>0$, \eqref{eq:tail-mean} therefore implies, for all
sufficiently large $N$,
\begin{equation}
\Pp(T_N>g)+\E(T_N-g)_+
 \le C_g\E(R_N^{\mathrm{tail}})^4
 \le C_{g,K}\mathfrak t_\mu(N^{1/3}).
\label{eq:tail-fixed-gap}
\end{equation}
Indeed, $KN^{-1/2}\le g/2$, and Markov's inequality and
$(x-g/2)_+\le (2/g)^3x^4$ apply to $R_N^{\mathrm{tail}}$.

\begin{proof}[Proof of \cref{thm:quantitative}]
Fix $\kappa_0\in(R_\alpha,\kappa)$ and put $g=\kappa-\kappa_0$.
Use the algorithm for truncated inputs in \cref{prop:truncated-polynomial}.
The pathwise comparison gives
\begin{align*}
\Pp(D_{M,N}>\kappa)
 &\le\Pp(D^{\mathrm{tr}}_{M,N}>\kappa_0)+\Pp(T_N>g),\\
(D_{M,N}-\kappa)_+
 &\le(D^{\mathrm{tr}}_{M,N}-\kappa_0)_++(T_N-g)_+.
\end{align*}
Equations \eqref{eq:truncated-polynomial} and \eqref{eq:tail-fixed-gap}
prove \eqref{eq:main-tail-modulus}.  For a fixed law, the fourth-moment
tail tends to zero.  Hence
\begin{equation}
\E D_{M,N}\le\kappa+\E(D_{M,N}-\kappa)_+=\kappa+o(1).
\label{eq:expected-from-L1}
\end{equation}
Letting $\kappa\downarrow R_\alpha$ gives the required upper bound on
the limiting optimal expectation.  Taking suprema of the tail modulus
proves the stated uniformity over classes with uniformly integrable
fourth moments.

All parameters depend on the law only through $K$.
Truncation itself takes $O(M)$ arithmetic operations; the remaining
complexity was verified after \cref{alg:online}.

For each fixed law the seed can also be fixed.  Average the nonnegative
loss $\1_{\{D_{M,N}>\kappa\}}+(D_{M,N}-\kappa)_+$ over the external seed.
Some fixed seed has conditional expected loss no larger than this
average.  Choosing such a seed for each $N$ preserves both conclusions.
The randomized algorithm is common to the whole class; a deterministic
seed is chosen separately for each law.
\end{proof}

\section{Lower bounds}
\label{sec:lower}

The lower bounds assume mean zero, unit variance, and a finite fourth
moment.  We recall the compactness argument and deduce infeasibility below
$R_\alpha$.

\subsection{Empirical path measures}
\label{sec:sampled-row}

With respect to the filtration
$(\sigma(\mathsf U,\xi_1,\ldots,\xi_k))_{k=0}^N$, write the Doob
decomposition in each coordinate as
\begin{equation}
X_{k,i}=A_{N,i}(k/N)+B_{N,i}(k/N),
\qquad 0\le k\le N,
\label{eq:selected-row-Doob}
\end{equation}
where $A_{N,i}$ is the predictable drift and $B_{N,i}$ the centered
martingale part, both linearly interpolated between mesh points.  On
$\mathcal C^2=C([0,1],\R)^2$, define the random empirical measure
\[
\widehat\nu_N:=\frac1M\sum_{i=1}^M\delta_{(A_{N,i},B_{N,i})}.
\]
Here $\delta_x$ denotes unit mass at $x$.  We write $\mathcal P(E)$ for
the Borel probability measures on a metric space $E$, equipped with weak
convergence.  The measure $\widehat\nu_N$ is random; its law is therefore
an element of $\mathcal P(\mathcal P(\mathcal C^2))$.  We retain this
outer probability law to bound the probability of simultaneous feasibility.

We use the following rectangular adaptation of
\cite[Sections~3.2--3.4]{FiedlerJacksonLackerNilesWeed26}, especially
Proposition~3.5 and Theorem~3.7, with the proportional scaling discussed
after their equations~(1.3)--(1.5).

\begin{proposition}
\label{prop:rectangular-compactness}
Assume that the coordinate law has mean zero, unit variance, and a finite
fourth moment.  For every sequence of randomized online algorithms and every
sequence $M/N\to\alpha$, the laws of $\widehat\nu_N$ are tight in
$\mathcal P(\mathcal P(\mathcal C^2))$, with weak convergence at both levels.
Every limit $Q$ is supported on measures $m$ under which the canonical
processes $(A,B)$ satisfy
\begin{equation}
A_t=\int_0^t a_s\,\dd s,\qquad 0\le t\le1,
\label{eq:weak-Brownian-limit}
\end{equation}
where $B$ is a Brownian motion in the canonical joint filtration, $a$ is
progressively measurable, and
\begin{equation}
\E_m a_t^2\le\frac{2}{\pi\alpha}=\ca^2
\qquad\text{for a.e.\ }t\in[0,1].
\label{eq:weak-energy-cap}
\end{equation}
In addition,
\begin{equation}
\liminf_{M/N\to\alpha}\inf_{\eps\,\mathrm{online}}\E_\mu D_{M,N}\ge R_\alpha.
\label{eq:FJLNW-expected-lower}
\end{equation}
\end{proposition}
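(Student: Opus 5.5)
The plan is to follow the compactness argument of \cite[Sections~3.2--3.4]{FiedlerJacksonLackerNilesWeed26} and adjust it to the rectangular scaling $M/N\to\alpha$. Write $\mathcal F_k=\sigma(\mathsf U,\xi_1,\ldots,\xi_k)$.

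\textbf{Step 1: increments of the Doob decomposition.} Since $\xi_k$ is independent of $\mathcal F_{k-1}$, the drift increment in coordinate $i$ is
\[
A_{N,i}(k/N)-A_{N,i}((k-1)/N)=N^{-1/2}\,\E[\eps_k\xi_{k,i}\mid\mathcal F_{k-1}].
\]
The key energy identity comes from the Cauchy--Schwarz/$L^1$ duality bound
\[
\sum_i \E[\eps_k\xi_{k,i}\mid\mathcal F_{k-1}]^2
=\E\Big[\eps_k\Big\langle w,\xi_k\Big\rangle\,\Big|\,\mathcal F_{k-1}\Big]
\le \|w\|_2\,\E|\langle \hat w,\xi_k\rangle|,
\]
where $w$ is the vector of conditional means and $\hat w=w/\|w\|_2$. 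This gives $\|w\|_2\le\sup_{\|v\|_2=1}\E|\langle v,\xi\rangle|\le1$. The bound $1$ is too crude, however, and the sharp constant $\cstar$ is what matters. So I expect to split $v$ into a delocalized part and a few large coordinates. On the delocalized part, Berry--Esseen with fourth moments gives $\E|\langle v,\xi\rangle|\le\cstar+o(1)$. The large-coordinate part has to be handled averaged over time and coordinates rather than uniformly.

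\textbf{Step 2: averaged energy cap.} This is the main obstacle: a uniform bound over unit vectors fails for non-Gaussian laws, e.g.\ $v=e_1$ gives $\E|\xi_1|$, which may exceed $\cstar$. I would follow FJLNW: $\E|\langle v,\xi\rangle|\le\cstar+C\|v\|_\infty^{\theta}$ using the fourth moment. Because $\sum_i w_{k,i}^2\le N\cdot O(1)$, the number of coordinates with $|w_{k,i}|$ large is small. Their contribution to the empirical measure $\widehat\nu_N$ vanishes after averaging over $i$, since each such coordinate has mass $1/M$. Normalizing by $M\approx\alpha N$, this yields
\[
\frac1M\sum_i(\text{drift rate})_i^2\le\cstar^2/\alpha+o(1)=\ca^2+o(1)
\]
in time-averaged form, hence \eqref{eq:weak-energy-cap} in the limit along test intervals.

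\textbf{Step 3: tightness and identification.} For tightness of $\widehat\nu_N$, I would use that the intensity measure $\E\widehat\nu_N$ is tight, which suffices for tightness of laws of random measures. The drift paths are $L^2$-equi-absolutely continuous by Step 2 in expectation. The martingale paths satisfy a Kolmogorov-type bound: conditional variances are at most $\E\xi^2/N$, and fourth moments are controlled by Burkholder. Along a subsequence $\widehat\nu_N\Rightarrow Q$.

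To identify the limits, I would show that for $Q$-a.e.\ $m$, $B$ is a martingale with quadratic variation $t$ in the joint filtration of $(A,B)$. For bounded continuous test functionals $\phi$ of the path up to time $s$, I would prove
\[
\frac1M\sum_i\phi_i\,(B_{N,i}(t)-B_{N,i}(s))\to0
\quad\text{in }L^2 .
\]
The cross terms across different $i$ are controlled because the martingale increments in distinct coordinates are uncorrelated given the past; each $\xi_{k,i}$ is independent, and the only dependence comes through $\eps_k^2=1$. I would treat the quadratic variation similarly, using $\E[(\eps_k\xi_{k,i})^2\mid\mathcal F_{k-1}]=1$, with the squared drift being $O(1/N)$ per step.

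Absolute continuity of $A$, with a progressive density $a$ satisfying \eqref{eq:weak-energy-cap}, then follows by lower semicontinuity of the energy functional $\int\!\int(\text{difference quotients})^2$ under weak convergence.

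\textbf{Step 4: the expected lower bound.} Given Steps 1--3, I would deduce \eqref{eq:FJLNW-expected-lower} by contradiction. Suppose the expected discrepancy stays below $R_\alpha-\eta$. Then Markov's inequality together with the Portmanteau theorem yields a positive $Q$-mass of measures $m$ whose terminal marginal of $A_1+B_1$ is supported in a bounded set with controlled mass outside. Projecting onto the natural Brownian filtration (\cref{lem:Brownian-projection}) produces an admissible control in \eqref{eq:VT-definition} with terminal essential supremum below $R_\alpha$, contradicting the definition of $R_\alpha$ after the scaling \eqref{eq:Ralpha-cap}. Alternatively, I would cite FJLNW's Theorem~3.7 directly at this point.
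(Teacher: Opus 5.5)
Your proposal is correct and follows essentially the paper's route. The paper reruns the tightness and martingale-problem argument of \cite{FiedlerJacksonLackerNilesWeed26} in that paper's normalization ($N$ steps of length $1/M$) and rescales afterwards; you rescale up front, so the budget $(N/M)\cstar^2\to\ca^2$ appears directly. Your delocalized/large-coordinate split is the drift truncation written out in the proof of \cref{thm:independent-array}, and your Step~4 is the paper's alternative deduction via \cref{prop:hp-lower}.

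One correction to Step~3: the centered increments $d_k=\eps_k\xi_k-u_k$, with $u_k=\E[\eps_k\xi_k\mid\cF_{k-1}]$, are not conditionally uncorrelated across coordinates, since $\E[d_kd_k^\top\mid\cF_{k-1}]=I-u_ku_k^\top$; this matrix is $\preceq I$, however, which is all your cross-term bound needs.
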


\begin{proof}
The cited Theorem~3.7 is stated for the square model on $[0,1]$.
Apply its tightness and martingale problem argument with dimension $M$,
$N$ steps of duration $1/M$, and entry normalization $M^{-1/2}$.
The resulting horizon is $N/M\to1/\alpha$.  The estimates of their
Sections~3.2--3.3 hold on any fixed bounded horizon; their drift truncation
argument gives the same pointwise budget $2/\pi$.  The time change
$s=tN/M$ and spatial multiplication by $\sqrt{M/N}$ put the paths on
$[0,1]$ in our normalization.  Brownian variance becomes one, and the
drift budget becomes $(N/M)(2/\pi)\to2/(\pi\alpha)$.
Thus every limiting distribution is supported on the stated admissible
laws.  An independent algorithmic seed can be included in the initial
filtration: the conditional moment and martingale estimates used in that
argument are unchanged.
The expected lower bound follows by their Section~3.4 with this scaling;
it also follows from \cref{prop:hp-lower} below and nonnegativity of $D_{M,N}$.
\end{proof}

\subsection{Infeasibility below the Brownian value}
\label{sec:hp-lower}

Projection onto the natural Brownian filtration
(\cref{lem:Brownian-projection}) preserves the closed interval constraint
and the bound on the drift's second moment.

\begin{proposition}
\label{prop:hp-lower}
Suppose $\mu$ has mean zero, unit variance, and a finite fourth moment.
For every $0\le\kappa<R_\alpha$ and every sequence $M/N\to\alpha$,
\begin{equation}
\sup_{\eps\,\mathrm{online}}\Pp(D_{M,N}\le\kappa)\longrightarrow0.
\label{eq:hp-success-assumption}
\end{equation}
\end{proposition}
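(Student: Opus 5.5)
We argue by contradiction using \cref{prop:rectangular-compactness} and the Portmanteau theorem, as sketched in the overview. Suppose that for some $\kappa<R_\alpha$ there are online algorithms and a subsequence along which $\Pp(D_{M,N}\le\kappa)\ge\eta>0$. By tightness in $\mathcal P(\mathcal P(\mathcal C^2))$, pass to a further subsequence with $\Law(\widehat\nu_N)\to Q$. By \cref{prop:rectangular-compactness}, $Q$ is supported on admissible laws $m$: under $m$, $A_t=\int_0^ta_s\dd s$, $B$ is Brownian in the joint filtration, and $\E_ma_t^2\le\ca^2$ for a.e.\ $t$.

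Define the set
\[
\mathcal K_\kappa:=\{m\in\mathcal P(\mathcal C^2): m(|A_1+B_1|\le\kappa)=1\}.
\]
This set is closed for weak convergence, because $\{(x,y):|x_1+y_1|\le\kappa\}$ is closed in $\mathcal C^2$; by Portmanteau, $\limsup m_n(F)\le m(F)$ for closed $F$, so the value $1$ passes to the limit. Since the interpolation preserves mesh values, $X_{N,i}=A_{N,i}(1)+B_{N,i}(1)$. Hence $\{D_{M,N}\le\kappa\}=\{\widehat\nu_N\in\mathcal K_\kappa\}$. Applying Portmanteau at the outer level to this closed set gives
\[
Q(\mathcal K_\kappa)\ge\limsup_N\Pp(\widehat\nu_N\in\mathcal K_\kappa)\ge\eta>0.
\]

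The key step, and the main obstacle, is to show that no admissible $m$ lies in $\mathcal K_\kappa$. Under such an $m$, $a$ is progressive for a filtration possibly larger than that of $B$. Project it by setting $\tilde a_t=\E_m[a_t\mid\cF^B_t]$, using \cref{lem:Brownian-projection}. By conditional Jensen, $\E\tilde a_t^2\le\E a_t^2\le\ca^2$. Moreover, $B_1+\int_0^1\tilde a_t\dd t=\E_m[B_1+A_1\mid\cF^B_1]$. This holds because $B$ is Brownian in the larger filtration, so future Brownian increments are independent of $a_t$ and conditioning on $\cF^B_1$ agrees with conditioning on $\cF^B_t$ (this is the content of the projection lemma). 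The conditional expectation of a variable in $[-\kappa,\kappa]$ stays in $[-\kappa,\kappa]$. Therefore $\tilde a$ is admissible for $\mathcal R(\ca)$ with terminal $L^\infty$ norm at most $\kappa$. This contradicts $\mathcal R(\ca)=R_\alpha>\kappa$ from \eqref{eq:Ralpha-cap}. Consequently $Q(\mathcal K_\kappa)=0$, which contradicts the display above. Since the subsequence and the algorithms were arbitrary, \eqref{eq:hp-success-assumption} follows.

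The case $\kappa=0$ is included, since $R_\alpha>0$. The supremum over algorithms is handled by choosing near-optimal algorithms for each $N$ before extracting subsequences.
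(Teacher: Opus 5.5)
Your proposal is correct and follows essentially the same route as the paper. The shared steps are closedness of the terminal-support set in $\mathcal P(\mathcal C^2)$, outer-level Portmanteau along a subsequential limit $Q$ from \cref{prop:rectangular-compactness}, projection via \cref{lem:Brownian-projection} to show $Q$ gives that set zero mass, and near-maximizing algorithms at each $N$ to handle the supremum. The differences are cosmetic: you argue by contradiction instead of the paper's subsequence-of-subsequence argument, and you could add the paper's remark that the projected control can be represented on canonical Wiener space, so that it is literally admissible for $\mathcal R(\ca)$.
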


\begin{proof}
For $m\in\mathcal P(\mathcal C^2)$ let
\[
\mathcal E(m):=\|A(1)+B(1)\|_{L^\infty(m)}.
\]
The set $\{m:\mathcal E(m)\le\kappa\}$ is closed: it consists of measures
giving mass one to the closed path set
$\{(a,b):|a(1)+b(1)|\le\kappa\}$.  In particular, $\mathcal E$ is lower
semicontinuous and $\mathcal E(\widehat\nu_N)=D_{M,N}$.
For an arbitrary sequence of online algorithms, take a subsequential
limit $Q$ supplied by \cref{prop:rectangular-compactness}.  For $Q$-almost
every $m$, \cref{lem:Brownian-projection} shows
$\mathcal E(m)\ge\mathcal R(\ca)=R_\alpha$: any smaller finite terminal
interval would also be attainable in the natural Brownian filtration.
Portmanteau therefore gives
\[
\limsup_N\Pp(D_{M,N}\le\kappa)
\le Q\{m:\mathcal E(m)\le\kappa\}=0.
\]
Every subsequence has such a further subsequence, so the whole sequence
converges.  If the supremum over algorithms failed to converge to zero,
choosing an almost maximizing algorithm at each $N$ would contradict the
statement just proved.
\end{proof}

\begin{corollary}
\label{cor:expected-to-probability}
Under the assumptions of \cref{prop:hp-lower}, any sequence of online
algorithms with $\E D_{M,N}\to R_\alpha$ satisfies
$\E|D_{M,N}-R_\alpha|\to0$, and hence $D_{M,N}\to R_\alpha$ in probability.
\end{corollary}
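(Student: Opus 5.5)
The corollary should follow from two ingredients: the high-probability lower bound in \cref{prop:hp-lower}, and the assumption that the expectations converge to $R_\alpha$. The argument splits $|D_{M,N}-R_\alpha|$ into its positive and negative parts. Since $D_{M,N}\ge0$, we have $|x|=x+2x_-$, and hence
\[
\E|D_{M,N}-R_\alpha|=\E(D_{M,N}-R_\alpha)+2\E(D_{M,N}-R_\alpha)_-.
\]
The first term tends to zero by hypothesis. It therefore suffices to show $\E(R_\alpha-D_{M,N})_+\to0$, and this negative part is bounded by $R_\alpha$ because $D_{M,N}\ge0$.

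For the negative part, fix $\eta\in(0,R_\alpha)$ and split on the event $\{D_{M,N}\le R_\alpha-\eta\}$:
\[
\E(R_\alpha-D_{M,N})_+\le \eta+R_\alpha\,\Pp(D_{M,N}\le R_\alpha-\eta).
\]
Apply \cref{prop:hp-lower} with $\kappa=R_\alpha-\eta<R_\alpha$. This bounds the probability of the event by the supremum over online algorithms, and that supremum tends to zero. Hence $\limsup_N\E(R_\alpha-D_{M,N})_+\le\eta$. Letting $\eta\downarrow0$ gives the claim. If $R_\alpha=0$ the negative part vanishes identically, but $R_\alpha>0$ anyway, since $\mathcal R(c)>0$ for finite $c$.

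Combining the two terms gives $\E|D_{M,N}-R_\alpha|\to0$. Convergence in probability then follows from Markov's inequality. No step here is a real obstacle. The only point needing care is that \cref{prop:hp-lower} is uniform over algorithms, so it applies to the given sequence along the same sequence $M/N\to\alpha$.
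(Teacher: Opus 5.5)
Your proof is correct and is essentially the paper's argument. You use the same bound $\E(R_\alpha-D_{M,N})_+\le\eta+R_\alpha\Pp(D_{M,N}\le R_\alpha-\eta)$ via \cref{prop:hp-lower}, combined with $|x|=x+2x_-$, which is equivalent to the paper's identity $\E(D_{M,N}-R_\alpha)_+=\E D_{M,N}-R_\alpha+\E(R_\alpha-D_{M,N})_+$. One small point: $|x|=x+2x_-$ holds for every real $x$, and nonnegativity of $D_{M,N}$ is needed only for the bound $(R_\alpha-D_{M,N})_+\le R_\alpha$.
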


\begin{proof}
For $0<\eps<R_\alpha$,
$\E(R_\alpha-D_{M,N})_+\le\eps+
R_\alpha\Pp(D_{M,N}<R_\alpha-\eps)$ has limsup at most $\eps$.
Send $\eps\downarrow0$ and use
\[
\E(D_{M,N}-R_\alpha)_+
=\E D_{M,N}-R_\alpha+\E(R_\alpha-D_{M,N})_+\longrightarrow0.
\]
\end{proof}

For Gaussian inputs, \cref{cor:expected-to-probability} applies to the online algorithms
in \cite{FiedlerJacksonLackerNilesWeed26} whose expected discrepancy converges
to $R_\alpha$.
Their Gaussian upper bound therefore also gives feasibility with
probability tending to one for every $\kappa>R_\alpha$.

\begin{proof}[Proof of \cref{thm:main}]
For part (i), apply \cref{thm:quantitative} with any fixed $A>0$, for
example $A=1$.  The fourth-moment tail tends to zero for each fixed law.
The expected upper bound follows from \eqref{eq:expected-from-L1} by
first taking a limsup and then letting $\kappa\downarrow R_\alpha$.
The matching lower bound is \eqref{eq:FJLNW-expected-lower}, and
\cref{prop:hp-lower} proves part (ii).
\end{proof}

\section{Moment bounds and independent entries}
\label{sec:extensions}

\subsection{Bounds under higher moments}

\begin{proof}[Proof of \cref{cor:higher-moment-rate}]
Let $q>4$ be as in the corollary.  Under
$\E|\xi|^q\le K$, Rosenthal with exponent $q$ improves
\eqref{eq:fourth-tail-Rosenthal} to
\begin{align}
\E(R_N^{\mathrm{tail}})^q
 &\le C_q\left[
 M\bigl(\E[(W_{1,1}^{\mathrm{tail}})^2]\bigr)^{q/2}
 +MN^{1-q/2}\E[(W_{1,1}^{\mathrm{tail}})^q]\right]\nonumber\\
 &\le C_{q,K}\left[
 N^{1-q(q-2)/6}+N^{2-q/2}\right]
 \le C_{q,K}N^{-(q-4)/2}.
\label{eq:high-moment-tail-Rosenthal}
\end{align}
Here $\E[(W^{\mathrm{tail}})^2]\le K H_N^{2-q}$ and
$\E[(W^{\mathrm{tail}})^q]\le K$; the first exponent is no larger than the
second because $(q-2)(q-3)>0$.
For a fixed gap, \eqref{eq:tail-fixed-gap} with $q$th moments bounds both
the failure probability and expected excess by this rate.
Choose the accuracy in \cref{prop:truncated-polynomial} to exceed
$(q-4)/2$ and use the same split
$\kappa_0\in(R_\alpha,\kappa)$.  The previous proof gives
\eqref{eq:higher-moment-rate}.
\end{proof}

\subsection{Independent entries}
\label{sec:independent-array}

\begin{theorem}
\label{thm:independent-array}
Let $M_N/N\to\alpha$, and let the entries $\xi_{N,k,i}$ be mutually
independent, each with mean zero and variance one.  Their distributions
may differ.
Assume their fourth moments are uniformly integrable:
\begin{equation}
\mathfrak t_4(H):=\sup_{N,k,i}
 \E[|\xi_{N,k,i}|^4\1_{\{|\xi_{N,k,i}|>H\}}]\longrightarrow0
 \quad(H\to\infty).
\label{eq:array-tail-modulus}
\end{equation}
Then the optimal expected online discrepancy converges to $R_\alpha$.
For every $\kappa>R_\alpha$ and $A>0$, the same construction, depending
only on $(\alpha,\kappa,A)$ and a common fourth-moment bound, satisfies
\eqref{eq:main-tail-modulus} with $\mathfrak t_4$ in place of
$\mathfrak t_\mu$.  A common $q$th-moment bound, $q>4$, gives the rate in
\cref{cor:higher-moment-rate}.  Under a common fourth-moment bound alone,
the supremal success probability tends to zero for every $\kappa<R_\alpha$.
\end{theorem}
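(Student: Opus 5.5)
The plan is to rerun the upper construction of \cref{sec:coupling,sec:algorithm} and the tail transfer of \cref{sec:tail-transfer}, tracking where the common law $\mu$ was used, and to redo the lower bound of \cref{prop:rectangular-compactness} with non-identical entries.

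\textbf{Upper bound.} Keep the truncation level $H_N=N^{1/3}$ and the entrywise truncations $\zeta_{k,i}$, whose laws $\nu_{N,k,i}$ now vary. The estimates \eqref{eq:truncation-moments} hold uniformly in $(k,i)$, since they used only a fourth-moment bound $K\ge\sup\E|\xi|^4$, which is finite by \eqref{eq:array-tail-modulus}. In \cref{lem:low-influence-response}, replace $m$ by $m_{k,i}$ and $\sigma^2$ by $\sigma_{k,i}^2$. The Berry--Esseen bound \cite[Theorem~3.1]{ChenGoldsteinShao11} already allows non-identical summands. The response error becomes
\[
C\rho\bigl[|v_i|(\norm v_\infty+\max_j|\sigma_j^2-1|+|d|)+|m_i|\bigr],\qquad d=\sum_jm_jv_j,
\]
and $|d|\le\sqrt M\max_j|m_j|=O(N^{-1/2})$ as before.

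\Cref{lem:exact-tilt,lem:product-innovation} go through with $\nu^{\otimes M}$ replaced by $\bigotimes_i\nu_{N,k,i}$. The successive maximal couplings of \cref{lem:product-innovation} are coordinatewise and never used identical laws. So \cref{prop:iterated-coupling} holds with $Z_k\sim\bigotimes_i\nu_{N,k,i}$, and this law is still fixed, hence independent of $\cH_{k-1}$. Sakhanenko's bound \eqref{eq:Sakhanenko} is stated for non-identical summands. The only change is that row $i$ is coupled to Gaussians with variances $\sigma_{k,i}^2$. Replacing them by unit variances costs $O(N^{-2/3}\sqrt{\log N})$ uniformly, exactly as before. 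The Brownian, F\"ollmer and Euler parts of \cref{sec:algorithm} see the input only through these couplings. Hence \cref{prop:truncated-polynomial} holds with constants depending on $K$ alone.

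For the tail transfer, \eqref{eq:fourth-tail-Rosenthal} uses Rosenthal's inequality, which holds for independent non-identical summands. Each moment is bounded by the supremum over entries, so $\mathfrak t_\mu$ is replaced by $\mathfrak t_4$. Under a common $q$th-moment bound, the argument of \cref{cor:higher-moment-rate} goes through verbatim. Uniform integrability makes $\mathfrak t_4(N^{1/3})\to0$, which gives the expected upper bound $R_\alpha$.

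\textbf{Lower bound.} This is the main obstacle. I would rerun the tightness and martingale-problem argument behind \cref{prop:rectangular-compactness}. Tightness of the drift paths uses the Cauchy--Schwarz budget $\E|\E[\eps\xi_{k,i}\mid\text{past},\xi_{k,-i}]|$, which is the key place the constant $2/\pi$ enters. The identification of the limiting drift as having energy at most $\ca^2$ rests on the step-wise bound
\[
\frac1M\sum_i\bigl(\E[\eps_k\xi_{k,i}\mid\cF_{k-1}]\bigr)^2\le\frac{\E|\langle w,\xi_k\rangle|^2}{\cdots},
\]
that is, $\sup_{\norm w_2=1}\E|\langle w,\xi_k\rangle|\to\cstar$. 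This in turn needs a CLT for $\langle w,\xi_k\rangle$ uniform over delocalized $w$. For localized $w$ the bound $\E|\langle w,\xi\rangle|\le1$ must be handled by the drift truncation argument of \cite{FiedlerJacksonLackerNilesWeed26}. Both the CLT and the truncation need only a uniform third or fourth moment, via Lindeberg with error $\sum_j|w_j|^3\E|\xi_j|^3$.

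The martingale part needs quadratic variation $\to t$. This requires a law of large numbers for $N^{-1}\sum_k(\xi_{k,i}^2-1)$ and for the covariance corrections, averaged over $i$. A common fourth-moment bound gives variance $O(K/N)$ for each, which suffices. I expect verifying these conditional estimates without identical distribution, in particular the uniform CLT, to be the detailed work, presumably done in the remainder of \cref{sec:independent-array}. Given the compactness statement, \cref{prop:hp-lower} applies verbatim and gives vanishing success below $R_\alpha$. Nonnegativity of $D_{M,N}$ then yields the matching expected lower bound.
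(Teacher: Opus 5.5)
Your upper bound follows the paper's proof: uniform truncation moments, the Berry--Esseen response bound with $\max_i|\sigma_{k,i}^2-1|$ and $d=\sum_iv_im_{k,i}$, coordinatewise maximal couplings whose second marginal is $\bigotimes_i\nu_{N,k,i}$ for every history, rowwise Sakhanenko with variances $\sigma_{k,i}^2$, and entrywise-centered Rosenthal for the discarded tails.

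The gap is the lower bound, whose decisive step you leave as ``the detailed work''. The sharp budget $\ca^2$ is exactly where the argument of \cite{FiedlerJacksonLackerNilesWeed26} cannot be rerun as is. Their Proposition~3.5 uses permutation symmetry of the coordinates, which fails when the laws differ. So appealing to ``the drift truncation argument'' of that paper does not close the step. Your description of the step is also off in three places:
\begin{itemize}
\item Tightness involves no $2/\pi$. It comes from Bessel's inequality $\norm{u_k}_2\le1$ for $u_k:=\E[\eps_k\xi_k\mid\cF_{k-1}]$, which bounds the row-averaged drift energy by $N/M$.
\item Your displayed inequality is left incomplete.
\item The proposed reduction to $\sup_{\norm w_2=1}\E|\langle w,\xi_k\rangle|\to\cstar$ is false: take $w=e_1$ and Rademacher entries.
\end{itemize}

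What is missing is a truncation that makes the relevant direction delocalized automatically. Put $a_{k,i}=\sqrt N\,u_{k,i}$, $L=\log M$, $b_{k,i}=a_{k,i}\1_{\{|a_{k,i}|\le L\}}$, and $v=b_k/\norm{b_k}_2$. Since $b_{k,i}a_{k,i}=b_{k,i}^2$ and $v$ is $\cF_{k-1}$-measurable,
\[
\frac{\norm{b_k}_2}{\sqrt N}=\langle v,u_k\rangle
=\E[\eps_k\langle v,\xi_k\rangle\mid\cF_{k-1}]
\le\E[|\langle v,\xi_k\rangle|\mid\cF_{k-1}].
\]
If $\norm{b_k}_2\ge M^{1/4}$, then $\norm v_\infty\le LM^{-1/4}$. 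The Wasserstein Berry--Esseen bound for independent non-identical summands then bounds the right side by $\cstar+C_KLM^{-1/4}$. Otherwise $M^{-1}\norm{b_k}_2^2<M^{-1/2}$. Either way, $M^{-1}\sum_ib_{k,i}^2\le\ca^2+o(1)$, deterministically and uniformly over online algorithms.

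The truncation does not change empirical limits. Let $\widetilde A_i$ be the integral of the truncated drift. Then $|a|\1_{\{|a|>L\}}\le a^2/L$ and $\norm{u_k}_2\le1$ give $M^{-1}\sum_i\norm{A_i-\widetilde A_i}_\infty\le N/(ML)\to0$. Lower semicontinuity of the weighted energies $\int_0^1w(t)|\dot A_t|^2\,\dd t$, over a countable dense family of continuous weights $w\ge0$, then gives $\E a_t^2\le\ca^2$ for a.e.\ $t$ in every limit.

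Your sketch of the martingale part points the right way, but it has to be run as an averaged martingale problem with test functions of past $(A,B)$ values. The key identity is $d_{k,i}^2-\E[d_{k,i}^2\mid\cF_{k-1}]=(\xi_{k,i}^2-1)-2u_{k,i}d_{k,i}$ with $d_k=\eps_k\xi_k-u_k$. This makes $B$ Brownian in the joint filtration, and only then do \cref{lem:Brownian-projection} and the Portmanteau argument of \cref{prop:hp-lower} apply.
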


\begin{proof}
Write $K=\sup_{N,k,i}\E|\xi_{N,k,i}|^4<\infty$ and suppress the index $N$.

\paragraph{Upper bound.}
Use the truncation $\xi_{k,i}\1_{\{|\xi_{k,i}|\le H_N\}}$ with $H_N=N^{1/3}$.
The coordinate means $m_{k,i}$ and variances $\sigma_{k,i}^2$ satisfy
\eqref{eq:truncation-moments} uniformly in $k,i$.
For the independent truncated coordinates, the
proof of \cref{lem:low-influence-response} gives
\[
|e_i|\le C\rho\left[
 |v_i|(\norm v_\infty+\Delta+|d|)+|m_{k,i}|\right],\qquad
|t|\le C\rho(\norm v_\infty+|d|),
\]
where $\Delta=\max_i|\sigma_{k,i}^2-1|=O(N^{-2/3})$ and
$d=\sum_iv_im_{k,i}=O(N^{-1/2})$.
Indeed, the centered projection variance is
$\sum_i v_i^2\sigma_{k,i}^2=1+O(\Delta)$, and the
Berry--Esseen bound for independent summands is unchanged.
The conditional densities and coupling kernels use the individual laws
$\nu_{k,i}$; their second marginal is $\bigotimes_i\nu_{k,i}$ for every
history.  Thus the estimates of \cref{prop:iterated-coupling} remain valid:
at time $k$, conditionally on the past, $S_k$ is a fair Rademacher sign
independent of $Z_k$, whose law is $\bigotimes_i\nu_{k,i}$.

Conditionally on the auxiliary signs, apply
\eqref{eq:Sakhanenko} to
$S_k(Z_{k,i}-m_{k,i})$, with matching Gaussian
$\sigma_{k,i}G_{k,i}$.  The mean correction has maximum at most
$\sqrt N\max_{k,i}|m_{k,i}|=O(N^{-1/2})$.
The variance correction has $L^{s}$ norm of the maximum over rows and partial sums
$O(N^{-2/3}\sqrt{\log N})$: in each row it is a Gaussian martingale
with total variance $O(N^{-4/3})$.  Hence independent standard Brownian motions
drive the comparison, and
\cref{prop:truncated-polynomial} holds uniformly for these arrays.

For the unsigned sum in \eqref{eq:tail-envelope}, center each entry
separately in the definition of $R_N^{\mathrm{tail}}$.  The maximum
normalized row mean is at most $KN^{-1/2}$, and Rosenthal's inequality
gives
\[
\E(R_N^{\mathrm{tail}})^4
\le C\left[N^{-1/3}\mathfrak t_4(H_N)^2+\mathfrak t_4(H_N)\right]
\le C_K\mathfrak t_4(H_N).
\]
At a fixed positive gap, this bounds the failure probability and expected
excess, and gives the upper bound on expected discrepancy.
Under a common $q$th moment, the
calculation in \eqref{eq:high-moment-tail-Rosenthal} is also unchanged.

\paragraph{Lower bound.}
Assume a common fourth-moment bound.  We replace the use of permutation
symmetry in \cite[Proposition~3.5]{FiedlerJacksonLackerNilesWeed26}
by an estimate for independent coordinates.  For an arbitrary online algorithm, let
$\cF_k=\sigma(\mathsf U,\xi_1,\ldots,\xi_k)$, where the seed
$\mathsf U$ is independent of all entries.  Condition on the past and set
\[
u_k=\E[\eps_k\xi_k\mid\cF_{k-1}],\qquad
d_k=\eps_k\xi_k-u_k,\qquad a_{k,i}=\sqrt N\,u_{k,i}.
\]
Orthonormality of the centered unit-variance coordinates in $L^2$ gives
\begin{equation}
\norm{u_k}_2\le1,\qquad
\E[d_kd_k^\top\mid\cF_{k-1}]=I-u_ku_k^\top,\qquad
\E[|d_{k,i}|^4\mid\cF_{k-1}]\le16K.
\label{eq:array-martingale-bounds}
\end{equation}
Here $I$ is the $M\times M$ identity matrix; in particular, the conditional
covariance is bounded above by $I$.  Define $A_i,B_i$ by the linearly interpolated
Doob decomposition \eqref{eq:selected-row-Doob}, suppressing the index $N$;
then $\dot A_i(t)=a_{k,i}$ on $((k-1)/N,k/N)$.
Thus the drift energy averaged across rows is at most $N/M$.  This bound
and the martingale fourth-moment estimates give tightness of the random
empirical $(A,B)$ path measures, as in \cref{prop:rectangular-compactness}.
For the linearly interpolated martingales,
$\E|B_i(t)-B_i(s)|^4\le C_K|t-s|^2$ uniformly in $i$.

To obtain the sharp energy bound, take $L=\log M$ and truncate the drift:
$b_{k,i}=a_{k,i}\1_{\{|a_{k,i}|\le L\}}$.
If $\|b_k\|_2\ge M^{1/4}$, put $v=b_k/\|b_k\|_2$.  Then
$\|v\|_2=1$ and $\|v\|_\infty\le L/M^{1/4}$, so
\begin{align}
\frac{\|b_k\|_2}{\sqrt N}
 &=\E[\eps_k\langle v,\xi_k\rangle\mid\cF_{k-1}]
 \le\E[|\langle v,\xi_k\rangle|\mid\cF_{k-1}]
 \le\cstar+C_K L/M^{1/4}.
\label{eq:array-sharp-energy}
\end{align}
The last step is the Wasserstein Berry--Esseen inequality for independent
summands with different distributions.  If $\|b_k\|_2<M^{1/4}$, its
empirical squared norm is less than $M^{-1/2}$.  Consequently,
$M^{-1}\sum_i b_{k,i}^2\le\ca^2+o(1)$ at every time, deterministically
and uniformly over online algorithms.
Let $a_i(t)=a_{k,i}$ and $b_i(t)=b_{k,i}$ for
$(k-1)/N<t\le k/N$, and set
$\widetilde A_i(t)=\int_0^t b_i(r)\,\dd r$.  Then
\[
\frac1M\sum_i\norm{A_i-\widetilde A_i}_\infty
 \le\frac1{ML}\sum_i\int_0^1 a_i(t)^2\,\dd t
 \le\frac{N}{ML}\longrightarrow0.
\]
Thus truncation preserves every empirical limit.  Lower semicontinuity
of the weighted energies $\int_0^1 w(t)|\dot A_t|^2\,\dd t$,
first for an arbitrary nonnegative continuous weight $w$ and then for
a countable dense family of such weights, shows that almost every limiting empirical
law has $\E a_t^2\le\ca^2$ for almost every $t$.

To identify the Brownian motion in each limiting law, use bounded
continuous functions of finitely many past evaluations of $(A_i,B_i)$,
together with a bounded $C^3$ function $f:\R\to\R$ whose first three
derivatives are bounded, in the averaged discrete martingale problem.
By \eqref{eq:array-martingale-bounds}, the averaged linear martingale
term has variance $O(1/M)$.  The total error in its quadratic
compensator is at most
$C(MN)^{-1}\sum_k\norm{u_k}_2^2\le C/M$, where $C$ depends on the test.
The centered quadratic term uses the identity
\[
d_{k,i}^2-\E[d_{k,i}^2\mid\cF_{k-1}]
 =(\xi_{k,i}^2-1)-2u_{k,i}d_{k,i}.
\]
For predictable weights $|g_i|\le C$, independence of the current raw
coordinates and \eqref{eq:array-martingale-bounds} give conditional
variance at most $C_K M$ for the weighted sum of these terms.
After the factor $1/(MN)$ and summation over $N$ steps its variance is
$O(1/(MN))$.  The averaged cubic Taylor remainder has expectation
$O_K(N^{-1/2})$.
Round the observation times in the cylinder functions down to the mesh.
Tightness makes the rounding and Riemann-sum errors vanish.  The resulting bounded
continuous empirical test functionals converge, and the residual
vanishes in probability for a countable determining family of tests.
Every random empirical limit is supported
on laws under which $B$ is Brownian in the \emph{joint} $(A,B)$
filtration.  Combined with the sharp bound on drift energy, these are exactly the
admissible laws of \cref{prop:rectangular-compactness}.  Projection onto the Brownian
filtration and closedness of the terminal support condition then prove
infeasibility below $R_\alpha$, and hence the expected lower bound.
\end{proof}

\section{The online threshold of the symmetric binary perceptron}
\label{sec:sbp}

\subsection{Monotonicity of the discrepancy curve}
\label{sec:monotonicity}

By \cite[Proposition~2.8]{FiedlerJacksonLackerNilesWeed26} and the
scaling identity \eqref{eq:Ralpha-cap}, the function $\mathcal R$ is
convex and nonincreasing on $(0,\infty)$.  The small-time asymptotic for
$V_T^\infty$ gives $\mathcal R(c)\to\infty$ as $c\downarrow0$, whereas
boundedness of $V_T^\infty$ gives $\mathcal R(c)\to0$ as $c\to\infty$.

\begin{lemma}
\label{lem:strict-monotonicity}
The function $\mathcal R$ is continuous and strictly decreasing on
$(0,\infty)$.  Consequently, $\alpha\mapsto R_\alpha$ is continuous and
strictly increasing from zero to infinity.
\end{lemma}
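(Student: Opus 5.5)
Continuity on $(0,\infty)$ is already in \cref{prop:brownian-facts}, and the paragraph above records that $\mathcal R$ is convex and nonincreasing, with $\mathcal R(c)\to\infty$ as $c\downarrow0$ and $\mathcal R(c)\to0$ as $c\to\infty$. So the real content is strict monotonicity of $\mathcal R$. The plan is to derive it from convexity together with positivity and the limit at infinity. After that, transfer the result to $\alpha\mapsto R_\alpha=\mathcal R(\ca)$ using $\ca=\sqrt{2/(\pi\alpha)}$.

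For strictness, first note that $\mathcal R(c)>0$ for every finite $c$. With a drift bound $c$, the terminal value $B_1+\int_0^1u_t\,\dd t$ cannot be a.s.\ bounded by an arbitrarily small radius. I will not prove this directly. Instead it follows from convexity and the limit at infinity: a convex nonincreasing function that vanishes at some finite $c_1$ vanishes on $[c_1,\infty)$. So I need a separate argument that $\mathcal R$ never reaches zero.

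Positivity: suppose $\|B_1+\int_0^1u_t\,\dd t\|_{L^\infty}\le r$. Then the process $M_t=\E[B_1+\int_0^1u\,\dd s\mid\cF_t^B]$ is a martingale bounded by $r$, with $M_1-M_0$ having variance at least $\Var(B_1)-$ (a contribution controlled by $\int_0^1\norm{u_t}_{L^2}\dd t\le c$). Cleaner: $\Var(B_1+\int u)\ge(1-c)^2$ by Minkowski when $c<1$, which gives positivity only for small $c$. For general $c$, I would use the scaling identity \eqref{eq:Ralpha-cap}, $\mathcal R(c)=(\cstar/c)V^\infty_{(c/\cstar)^2}$, together with the fact that $V_T^\infty$ is nondecreasing in $T$ (\cref{prop:brownian-facts}). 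For $T\ge T_0>0$ this gives $V_T^\infty\ge V_{T_0}^\infty>0$, and positivity of $V_{T_0}^\infty$ for small $T_0$ is exactly the small-$c$ Minkowski bound. Hence $\mathcal R>0$ everywhere.

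Strict decrease then follows from a standard convexity argument. Suppose $\mathcal R(c_1)=\mathcal R(c_2)$ for some $c_1<c_2$. Then, being nonincreasing, $\mathcal R$ is constant on $[c_1,c_2]$. A convex function with a flat segment must be nondecreasing to its right, since its slopes are nondecreasing. Combined with nonincreasing, $\mathcal R$ is constant and positive on $[c_1,\infty)$. This contradicts $\mathcal R(c)\to0$.

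For $R_\alpha$: the map $\alpha\mapsto\ca$ is continuous and strictly decreasing from $\infty$ (as $\alpha\downarrow0$) to $0$ (as $\alpha\to\infty$). Composing with the continuous strictly decreasing $\mathcal R$ gives a continuous strictly increasing function. Its limits are $\lim_{c\to\infty}\mathcal R(c)=0$ as $\alpha\downarrow0$ and $\lim_{c\downarrow0}\mathcal R(c)=\infty$ as $\alpha\to\infty$.

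The main obstacle is justifying the two boundary limits and convexity, which are imported from the cited work. The only genuinely new step is positivity, handled above via monotonicity of $V_T^\infty$.
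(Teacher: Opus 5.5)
Your proof is correct. The flat-segment convexity argument and the transfer to $R_\alpha$ through $\ca=\cstar/\sqrt\alpha$ match the paper's proof.

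The difference is how you get $\mathcal R(c)>0$ for every finite $c$. The paper imports this from the entropy lower bound of \cite[Proposition~6.5]{FiedlerJacksonLackerNilesWeed26}, after scaling. You instead combine three ingredients: the scaling identity $\mathcal R(c)=(\cstar/c)V^\infty_{(c/\cstar)^2}$, the monotonicity of $T\mapsto V_T^\infty$ already recorded in \cref{prop:brownian-facts}, and the elementary estimate $\|B_T+\int_0^Tu_t\,\dd t\|_{L^\infty}\ge\|B_T+\int_0^Tu_t\,\dd t\|_{L^2}\ge\sqrt T-\cstar T$. That estimate is positive for every $T<\pi/2$. Hence $V_T^\infty>0$ for all $T>0$: directly when $T<\pi/2$, and by monotonicity from a fixed $T_0<\pi/2$ otherwise.

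Monotonicity of $\mathcal R$ alone could not carry small-$c$ positivity to large $c$. Your argument works because scaling reverses direction ($V$ nondecreasing, $\mathcal R$ nonincreasing), and you used this correctly. Your route is more self-contained: it needs only facts already recalled in \cref{sec:benchmark} plus a Minkowski inequality. The paper's citation brings in heavier Section~6 machinery whose quantitative content is not needed for this lemma.

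One expository slip. Your first strictness paragraph says positivity ``follows from convexity and the limit at infinity.'' This is false, since a convex nonincreasing function can vanish from some point on. You immediately note that a separate argument is needed, so just delete that sentence.
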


\begin{proof}
A finite convex function on an open interval is continuous.  If a convex
nonincreasing function is constant on a nontrivial interval, its one-sided
slopes are zero there.  Convexity makes later slopes nonnegative, while
monotonicity makes them nonpositive, so the function remains constant
thereafter.  For $\mathcal R$ this would force the constant to be zero,
since $\mathcal R(c)\to0$ as $c\to\infty$.  But the entropy lower bound of
\cite[Proposition~6.5]{FiedlerJacksonLackerNilesWeed26}, after scaling,
gives $\mathcal R(c)>0$ for every finite $c>0$.
Thus $\mathcal R$ is strictly decreasing.  The identity
$R_\alpha=\mathcal R(\cstar/\sqrt\alpha)$ proves the claim for $R_\alpha$.
\end{proof}

\subsection{The threshold and its asymptotics}
\label{sec:threshold}

For a law $\mu$, define
\begin{equation}
\aonmu(\kappa)
:=\sup\left\{\alpha>0:
\begin{array}{l}
\text{for every }M/N\to\alpha\text{ there are online algorithms with}\\[-1mm]
\Pp_\mu(D_{M,N}\le\kappa)\to1
\end{array}\right\}.
\label{eq:SBP-threshold-definition}
\end{equation}

\begin{proof}[Proof of \cref{cor:sbp}]
By \cref{thm:main}, a fixed density $\alpha$ is achievable at margin
$\kappa$ whenever $R_\alpha<\kappa$, and is infeasible when
$R_\alpha>\kappa$.  By \cref{lem:strict-monotonicity}, the equation $R_\alpha=\kappa$ has
a unique solution.  The definition as a supremum identifies that
solution with $\aonmu(\kappa)$ for every admissible $\mu$.
\end{proof}

We invert the asymptotics at the two ends of the curve.  As
$\alpha\downarrow0$,
\begin{equation}
R_\alpha=\sqrt\alpha\,V_{1/\alpha}^{\infty}
\sim \ell_\star\sqrt\alpha,
\label{eq:small-alpha-proof}
\end{equation}
while as $\alpha\to\infty$, the asymptotic from
\cite[Propositions~6.3--6.5]{FiedlerJacksonLackerNilesWeed26},
\begin{equation}
V_T^{\infty}\sim\sqrt{2T\log(1/T)}
\qquad(T\downarrow0),
\label{eq:FJLNW-small-time}
\end{equation}
gives
\begin{equation}
R_\alpha\sim\sqrt{2\log\alpha}.
\label{eq:large-alpha-proof}
\end{equation}
Inverting these equivalents using \cref{lem:strict-monotonicity} yields
\begin{equation}
\aonmu(\kappa)\sim \ell_\star^{-2}\kappa^2
\quad(\kappa\downarrow0),
\qquad
\log\aonmu(\kappa)\sim\frac{\kappa^2}{2}
\quad(\kappa\to\infty),
\label{eq:sbp-asymptotics}
\end{equation}
for every $\mu$ satisfying the assumptions of \cref{thm:main}.
In particular, $\ell_\star^{-2}$ is the exact leading constant in the
small-margin online threshold.  The curve passes through
$\aonmu(\kappa)=1$ precisely at $\kappa=V_1^\infty$.
The explicit bounds of \cite[Section~6]{FiedlerJacksonLackerNilesWeed26}
give
\[
1.09\le V_1^\infty\le1.97.
\]

\paragraph{Comparison with the offline capacity.}
For Gaussian and Rademacher entries, the offline capacity is
\[
\alpha_{\mathrm{off}}(\kappa)
=-\frac{\log 2}{\log p_\kappa},
\qquad
p_\kappa=\Pp(|G|\le\kappa),\quad G\sim N(0,1).
\]
Aubin, Perkins, and Zdeborov\'a \cite{AubinPerkinsZdeborova19} identify
this formula under an analytic condition.  Perkins and Xu
\cite[Theorem~2]{PerkinsXu21} establish the Gaussian sharp threshold
under that condition; Abbe, Li, and Sly
\cite[Theorem~2.3 and Lemma~3.4]{AbbeLiSly21} establish the needed analytic
inequality and prove the binary sharp threshold.  The Gaussian formula
also follows from that binary threshold and the universality theorem of
\cite[Theorem~1.11]{NakajimaSun23}.
Since $p_\kappa=\sqrt{2/\pi}\,\kappa(1+O(\kappa^2))$,
\[
\alpha_{\mathrm{off}}(\kappa)\sim\frac{\log2}{\log(1/\kappa)},
\qquad
\frac{\aonmu(\kappa)}{\alpha_{\mathrm{off}}(\kappa)}
\sim\frac{\kappa^2\log(1/\kappa)}{\ell_\star^2\log2}
\longrightarrow0
\quad(\kappa\downarrow0).
\]

\paragraph{Open questions.}
What is the weakest tail condition under which universality holds?
Is the rate in \cref{cor:higher-moment-rate} optimal, or can an online algorithm
respond more efficiently to rare large entries?  Further algorithmic
questions are how to compute a near-optimal Brownian control
efficiently and what numerical precision its implementation requires.
Computing $\ell_\star$ would also give a numerical value for the universal leading constant
in the threshold as $\kappa\downarrow0$.

\section*{AI Usage Disclosure}

OpenAI's GPT-5.6 Sol was used for exploratory discussions during the
development of the proofs and to assist with drafting and revising
the manuscript. The authors take full responsibility for the paper's
contents and correctness.

\appendix

\section{Projection and approximation of Brownian controls}
\label[appendix]{app:controls}

\subsection{Projection onto the Brownian filtration}
\label{app:projection}

\begin{lemma}
\label{lem:Brownian-projection}
Let $B$ be a Brownian motion in a filtration $\cG=(\cG_t)_{0\le t\le1}$, and
let $a$ be progressively measurable with respect to $\cG$, with
$\E\int_0^1a_t^2\,\dd t<\infty$.  Let $C\subset\R$ be closed and convex and
suppose
\begin{equation}
B_1+\int_0^1a_t\,\dd t\in C\qquad\text{a.s.}
\label{eq:projection-hypothesis}
\end{equation}
Let $u$ be the orthogonal projection of $a$ in
$L^2([0,1]\times\Omega,\dd t\otimes\Pp)$ onto the closed subspace of
processes predictable with respect to $\cF^B$.  Since the Brownian filtration is
continuous, this projection satisfies
$u_t=\E[a_t\mid\cF_t^B]$ for almost every $t$.  Then
\begin{equation}
B_1+\int_0^1u_t\,\dd t
=\E\left[B_1+\int_0^1a_t\,\dd t\,\middle|\,\cF_1^B\right]
\in C\qquad\text{a.s.},
\label{eq:projection-terminal}
\end{equation}
and
\begin{equation}
\E u_t^2\le\E a_t^2
\qquad\text{for a.e.\ }t.
\label{eq:projection-energy}
\end{equation}
\end{lemma}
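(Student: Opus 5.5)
The plan has three parts: identify the projection explicitly as $u_t=\E[a_t\mid\cF^B_t]$, transfer the terminal constraint by a conditional Fubini argument, and read off the energy bound from conditional Jensen.

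\emph{Identifying the projection.} Progressive measurability of $a$ and square integrability let us choose a jointly measurable version of $(t,\omega)\mapsto\E[a_t\mid\cF^B_t]$; optional projection gives one. The Brownian filtration is continuous, so predictable and progressive $\cF^B$ processes agree up to $\dd t\otimes\Pp$-null sets, and the optional projection may be replaced by the predictable one. To show that this process is the orthogonal projection, take any bounded $\cF^B$-progressive $w$. Fubini and the tower property give $\E\int_0^1 a_tw_t\,\dd t=\E\int_0^1 u_tw_t\,\dd t$. Such $w$ are dense in the closed subspace, so $a-u$ is orthogonal to it.

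\emph{Terminal constraint.} The key step is $\E[a_t\mid\cF^B_1]=\E[a_t\mid\cF^B_t]$ for a.e.\ $t$. This holds because $B$ is a $\cG$-Brownian motion: the increments $(B_r-B_t)_{r\ge t}$ are independent of $\cG_t$. Since $\cF^B_1=\cF^B_t\vee\sigma(B_r-B_t:r\ge t)$, the added independent information does not change the conditional expectation of the $\cG_t$-measurable variable $a_t$. Integrating in $t$, using conditional Fubini, which is justified by $\E\int_0^1|a_t|\,\dd t<\infty$, gives
\[
\E\Bigl[\int_0^1a_t\,\dd t\Bigm|\cF^B_1\Bigr]=\int_0^1u_t\,\dd t .
\]
Since $B_1$ is $\cF^B_1$-measurable, this yields the equality in \eqref{eq:projection-terminal}. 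To verify the conditional Fubini identity, I would test against bounded $\cF^B_1$-measurable $\Phi$: write $\E[\Phi a_t]=\E[\E[\Phi\mid\cG_t]a_t]$ and note that $\E[\Phi\mid\cG_t]=\E[\Phi\mid\cF^B_t]$ by the same independence. This is the main obstacle, and it is where the hypothesis that $B$ is Brownian in the larger filtration is used. Finally, conditional expectations of an integrable random variable taking values a.s.\ in a closed convex $C\subset\R$ stay in $C$. Here $C$ is an interval, so this follows from monotonicity of conditional expectation applied to its finite endpoints.

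\emph{Energy bound.} Conditional Jensen gives $u_t^2\le\E[a_t^2\mid\cF^B_t]$ a.s. Taking expectations yields \eqref{eq:projection-energy} for a.e.\ $t$.
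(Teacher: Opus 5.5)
Your proposal is correct and follows the paper's proof. Both rest on the same ingredients: the identity $\E[H\mid\cF^B_1]=\E[H\mid\cF^B_t]$ for $\cG_t$-measurable $H$, which comes from independence of the future Brownian increments from $\cG_t$; conditional Fubini; preservation of a closed interval under conditional expectation; and conditional Jensen. The differences are minor: you verify conditional Fubini by testing against bounded $\cF^B_1$-measurable $\Phi$ rather than by truncating $a_t$, and you also check the identification $u_t=\E[a_t\mid\cF^B_t]$, which the paper states without proof.
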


\begin{proof}
Because $B$ is Brownian in $\cG$, for every integrable $\cG_t$-measurable
random variable $H$,
\begin{equation}
\E[H\mid\cF_1^B]=\E[H\mid\cF_t^B].
\label{eq:immersion-identity}
\end{equation}
Apply this to bounded truncations of $a_t$ and use conditional Fubini and
$L^1$ convergence.  This gives the equality in
\eqref{eq:projection-terminal}.  Conditional expectation preserves a closed
convex subset of the real line, which gives the last assertion of
\eqref{eq:projection-terminal}.  The energy bound is conditional Jensen.
\end{proof}

\subsection{Bounded approximations}
\label{app:bounded-approximations}

We prove \cref{prop:FJLNW-approximants}.  The bounded approximation
result of \cite[Lemma~4.15]{FiedlerJacksonLackerNilesWeed26}, combined
with its diffusion representation in Lemma~2.4 and Brownian scaling,
gives a probability law $m_T$ supported on $[-r_{c,T},r_{c,T}]$ and
processes
\[
X_t^{(n)}=B_t^{(n)}+\int_0^t a_r^{(n)}\,\dd r,
\qquad 0\le t\le T,
\]
with the following properties.  For each $n$, the process $B^{(n)}$ is
Brownian in a filtration to which the bounded control $a^{(n)}$ is
progressively measurable, and
\[
\|a_t^{(n)}\|_{L^2}\le c\quad\text{for a.e. }t,
\qquad
\mathbf d_p\bigl(\Law(X_T^{(n)}),m_T\bigr)\longrightarrow0.
\]
The bounded approximation argument holds on every fixed finite horizon
(Section~5 of the cited paper), including the horizon $(c/\cstar)^2T$
obtained by scaling the budget to $\cstar$.  Rescaling a horizon-$T$
control to unit time multiplies its drift budget by $\sqrt T$, which gives
$r_{c,T}=\sqrt T\,\mathcal R(c\sqrt T)$.

Project $a^{(n)}$ onto the completed natural filtration of $B^{(n)}$
and call the projected control $u^{(n)}$.  The projection identity used
in \cref{lem:Brownian-projection}, now on $[0,T]$, gives
\[
Y_T^{(n)}:=B_T^{(n)}+\int_0^T u_t^{(n)}\,\dd t
=\E[X_T^{(n)}\mid\mathcal F_T^{B^{(n)}}].
\]
The projected control remains bounded and satisfies the same pointwise
$L^2$ budget.  Since $x\mapsto(|x|-r_{c,T})_+^p$ is convex,
conditional Jensen and an optimal coupling with $m_T$ imply
\[
\E(|Y_T^{(n)}|-r_{c,T})_+^p
\le\E(|X_T^{(n)}|-r_{c,T})_+^p
\le\mathbf d_p(\Law(X_T^{(n)}),m_T)^p\longrightarrow0.
\]
Each control adapted to the natural Brownian filtration can be represented
on canonical Wiener space, with the fixed Brownian motion $B$ in the
statement.  The bound on the distance to $[-r_{c,T},r_{c,T}]$ proves
\eqref{eq:bounded-deficit}.
\qed

\subsection{Approximation by simple predictable controls}
\label{app:finite-observation}

We prove \cref{prop:finite-observation}.  Consider the dyadic partition
$0=t_{0,n}<\cdots<t_{J_n,n}=T$.  Denote
$\Delta_{j,n}=t_{j+1,n}-t_{j,n}$.  The orthogonal projection of $u$ onto the
space of predictable processes which are constant on each
$(t_{j,n},t_{j+1,n}]$ is
\begin{equation}
u^{(n)}_t=\zeta_{j,n},
\qquad
\zeta_{j,n}:=
\E\left[
\frac1{\Delta_{j,n}}\int_{t_{j,n}}^{t_{j+1,n}}u_r\,\dd r
\middle|\cF_{t_{j,n}}^B
\right].
\label{eq:predictable-projection}
\end{equation}
Then $u^{(n)}\to u$ in $L^2([0,T]\times\Omega,\dd t\otimes\Pp)$.
Conditional expectation and Minkowski's
inequality give
\begin{equation}
\norm{\zeta_{j,n}}_{L^2}
\le\frac1{\Delta_{j,n}}
\int_{t_{j,n}}^{t_{j+1,n}}\norm{u_r}_{L^2}\,\dd r
\le c_0.
\label{eq:block-L2-cap}
\end{equation}
Choose a rational $K_u>\norm{u}_{L^\infty}$; then
$|\zeta_{j,n}|\le K_u$.  Progressively measurable controls have predictable
representatives modulo $\dd t\otimes\Pp$-null sets.

Fix $n$.  The Brownian sigma-field $\cF_{t_{j,n}}^B$ is generated, modulo
completion, by evaluations at rational times not exceeding $t_{j,n}$.
Consequently, bounded continuous functions of finitely many such evaluations
are dense in $L^2(\cF_{t_{j,n}}^B)$.  Choose
\begin{equation}
\widetilde\zeta_{j,n}
=F_{j,n}(B_{r_{j,n,1}},\ldots,B_{r_{j,n,m_{j,n}}})
\label{eq:cylinder-approximation}
\end{equation}
with $r_{j,n,\ell}\le t_{j,n}$ and with $F_{j,n}$ bounded and globally
Lipschitz, so that the $L^2$ error is arbitrarily small.  Clipping the
function at $[-K_u,K_u]$ does not enlarge the error.  A continuous function
on a large rational box can be approximated uniformly by a rational
piecewise-affine function; tapering outside the box preserves boundedness
and global Lipschitz continuity.  Hence the approximants may be given
finite rational descriptions.

Let $\beta^{(n)}$ be the step process with block values
$\widetilde\zeta_{j,n}$.  First choose the cylinder approximations so that
\begin{equation}
\max_j\norm{\widetilde\zeta_{j,n}-\zeta_{j,n}}_{L^2}
<c_1-c_0.
\label{eq:block-slack-preserved}
\end{equation}
Then the $L^2$ norm of $\beta^{(n)}_t$ is below $c_1$ uniformly in $t$.
Next choose them so that $\beta^{(n)}-u^{(n)}$ is small in
$L^2([0,T]\times\Omega)$ and send
$n\to\infty$.  We obtain
\begin{equation}
\norm{\beta^{(n)}-u}_{L^2([0,T]\times\Omega)}\longrightarrow0.
\label{eq:beta-u-L2}
\end{equation}
The processes are uniformly bounded.  For $p>2$, interpolation between
\eqref{eq:beta-u-L2} and the uniform $L^\infty$ bound gives convergence in
$L^p([0,T]\times\Omega)$; for $1\le p\le2$, it follows directly from
H\"older's inequality.  Minkowski therefore gives
\begin{equation}
\left\|\int_0^T(\beta_t^{(n)}-u_t)\,\dd t\right\|_{L^p}
\le\int_0^T\norm{\beta_t^{(n)}-u_t}_{L^p}\,\dd t
\longrightarrow0.
\label{eq:integrated-control-error}
\end{equation}
Taking $n$ large proves \cref{prop:finite-observation}.
\qed

\section{Measurability of the coupling}
\label[appendix]{app:coupling-kernels}

We construct the measurable kernels used in
\cref{lem:product-innovation}.  Let $\Theta$ be a standard Borel parameter
space, let $\mu$ be a probability measure on $\R$, and let
$g:\Theta\times\R\to[0,\infty)$ be jointly measurable with
$\int g(\vartheta,x)\,\mu(\dd x)=1$.
The first variable $X$, with law $g(\vartheta,\cdot)\mu$, has already
been sampled.  We attach a variable $Y$ with law $\mu$ without changing $X$.

When $\int(1-g(\vartheta,z))_+\,\mu(\dd z)>0$, define the residual law
\begin{equation}
\lambda^-_\vartheta(\dd y)
:=\frac{(1-g(\vartheta,y))_+\,\mu(\dd y)}
        {\int(1-g(\vartheta,z))_+\,\mu(\dd z)}.
\label{eq:residual-kernels}
\end{equation}
For $g(\vartheta,x)>0$, use the conditional kernel
\[
K_\vartheta(x,\dd y)
=\frac{\min\{g(\vartheta,x),1\}}{g(\vartheta,x)}\delta_x(\dd y)
 +\frac{(g(\vartheta,x)-1)_+}{g(\vartheta,x)}
  \lambda^-_\vartheta(\dd y).
\]
Thus, given $X=x$, keep $Y=x$ with probability
$\min\{1,1/g(\vartheta,x)\}$; otherwise draw $Y$ from the residual law.
If the residual mass is zero, then $g(\vartheta,\cdot)=1$ $\mu$-almost
everywhere, and we use the diagonal kernel.  Where $g(\vartheta,x)=0$,
use any fixed probability kernel; this is a null set for the law of $X$.
All kernels are jointly measurable and can be sampled using independent
uniform random variables.

Since $\int(g-1)_+\,\dd\mu=\int(1-g)_+\,\dd\mu$, integrating the kernel
against $g\,\dd\mu$ gives the marginal law $\mu$ for $Y$.
The common part has mass $\int\min\{g,1\}\,\dd\mu$, so the coupling is
maximal.  For every nonnegative measurable $F$,
\begin{equation}
\E[F(X)\1_{\{X\ne Y\}}]
=\int F(x)(g(\vartheta,x)-1)_+\,\mu(\dd x),
\label{eq:residual-first-moment}
\end{equation}
and the analogous identity with $(1-g)_+$ holds for $Y$.  Consequently,
\begin{equation}
\E|X-Y|^2
\le2\int x^2|g(\vartheta,x)-1|\,\mu(\dd x)
\le2\int(1+x^2)|g(\vartheta,x)-1|\,\mu(\dd x).
\label{eq:maximal-coupling-moment}
\end{equation}

Apply this kernel successively with $\mu=\nu$ and $g=g_i$ from
\eqref{eq:gi-density}, conditionally on $(\eps,S)$, using independent
uniforms.  Each $Z_i$ is sampled from the observed prefix
$(\zeta_1,\ldots,\zeta_i)$ and its auxiliary uniform, so conditioning on
the preceding $Z$ coordinates does not change the conditional density
of $\zeta_i$.  The conditional law of $Z_i$ given $(\eps,S)$ and the
preceding $\zeta$ and $Z$ coordinates is $\nu$.  Induction gives
$Z\sim\nu^{\otimes M}$, independent of $(\eps,S)$, while preserving the
original online trajectory.  In the temporal iteration $\nu=\nu_N$,
and the conditional joint law in \eqref{eq:product-innovations} is the
same at every step.

\section{F\"ollmer drifts and Euler approximation}
\label[appendix]{app:Follmer}

The score representation and the construction of terminal laws with
shrinking support are
from \cite[Lemmas~4.1--4.2]{FiedlerJacksonLackerNilesWeed26}, in the
formulation for compactly supported densities in
\cite[Lemmas~C.1--C.3]{JoLee26}.
The Euler estimate follows \cite[Lemma~4.5 and Appendix~C.4]{JoLee26}.
We include the proofs to track the reduced drift budget, the
deterministic terminal cutoff, and the resulting bounds uniformly over
all coordinates.

\subsection{Localization and the score martingale}
\label{app:score-martingale}

Fix $p\ge2$.  Let $f\ge0$ satisfy $\int f\,\dd\Gamma_h=1$.
Assume that $f$ is compactly supported, absolutely continuous, and vanishes
at the boundary of its support.  Define the logarithmic derivative of the
density $f$ relative to $\Gamma_h$ by
\begin{equation}
s_f(x):=\begin{cases}f'(x)/f(x),&f(x)>0,\\0,&f(x)=0,
\end{cases}
\label{eq:score-definition}
\end{equation}
and assume $s_f\in L^p(f\Gamma_h)$.  Put
\begin{equation}
q_t(x):=P_{h-t}f(x),
\qquad
v(t,x):=\partial_x\log q_t(x),
\qquad 0\le t<h.
\label{eq:q-v-Follmer}
\end{equation}
On canonical Brownian space define $\mathbb Q$ by
\begin{equation}
\frac{\dd\mathbb Q}{\dd\Pp}=f(B_h).
\label{eq:Follmer-change-measure}
\end{equation}
Then $q_t(B_t)=\E[f(B_h)\mid\cF_t^B]$ is the density process.  Localizing away
from zero and infinity, It\^o's formula and the backward heat equation give
\begin{equation}
\dd q_t(B_t)=q_t(B_t)v(t,B_t)\,\dd B_t.
\label{eq:density-SDE}
\end{equation}
Girsanov's theorem therefore makes
\begin{equation}
\widetilde B_t:=B_t-\int_0^t v(s,B_s)\,\dd s
\label{eq:Q-Brownian}
\end{equation}
a Brownian motion under $\mathbb Q$.  Differentiation under convolution and
Bayes' formula yield
\begin{equation}
v(t,B_t)
=\frac{P_{h-t}f'(B_t)}{P_{h-t}f(B_t)}
=\E_{\mathbb Q}[s_f(B_h)\mid\cF_t^B].
\label{eq:score-martingale}
\end{equation}
Conditional Jensen gives
\begin{equation}
\sup_{t<h}\norm{v(t,B_t)}_{L^p(\mathbb Q)}
\le\norm{s_f}_{L^p(f\Gamma_h)}.
\label{eq:score-martingale-Lp}
\end{equation}
The terminal law under $\mathbb Q$ is $f\Gamma_h$.  The bound
\eqref{eq:score-martingale-Lp} implies
$\E_{\mathbb Q}\int_0^h|v(t,B_t)|^2\,\dd t<\infty$ when $p\ge2$.
Consequently the drift integral has a continuous limit at $h$, and the
localized Girsanov identities extend first to every interval $[0,h-\epsilon]$
and then to $[0,h]$ by continuity.  This proves the weak diffusion
representation up to and including the terminal time.

For the even log-concave densities below,
\eqref{eq:Follmer-derivative} makes the drift globally Lipschitz with linear
growth on each $[0,h-\epsilon]$.  Thus the SDE has a unique strong solution
there.  Uniqueness in law identifies it with the preceding weak realization.
These solutions agree on overlapping intervals.  Their common law and
the integrability of the drift give a continuous extension to $h$,
whose law is $f\Gamma_h$.  This gives the strong solution asserted in
\cref{prop:compact-confinement} up to and including time $h$.

\subsection{Construction of the compactly supported density}
\label{app:tent}

Set $\varpi:=c_F/(4p)$.  Define
\begin{equation}
r_h:=2\sqrt{p h\log(4\varpi^{-2}/h)}
\label{eq:tent-radius}
\end{equation}
for small $h$, let $H_h$ equal one on $[-r_h/2,r_h/2]$, decrease linearly to
zero on the two adjacent half-intervals, and vanish outside $[-r_h,r_h]$,
and set
\begin{equation}
f_h:=\frac{H_h^p}{\int H_h^p\,\dd\Gamma_h},
\qquad s_p(h,c_F):=r_h.
\label{eq:tent-density}
\end{equation}
The Gaussian tail bound gives
\begin{equation}
\Gamma_h([r_h/2,\infty))
\le e^{-r_h^2/(8h)}
=\left(\frac \varpi2\right)^p h^{p/2}.
\label{eq:tent-tail}
\end{equation}
Put $\ell_h=\log(4\varpi^{-2}/h)$.  The two sloping pieces of the cutoff
have $|H_h'|=2/r_h$, so \eqref{eq:tent-tail} gives
\[
\int|H_h'|^p\,\dd\Gamma_h
\le 2(2/r_h)^p e^{-r_h^2/(8h)}
=\frac{2^{1-p}\varpi^p}{(p\ell_h)^{p/2}}.
\]
Also
$\int H_h^p\,\dd\Gamma_h\ge1-2e^{-r_h^2/(8h)}\ge1/2$
for all sufficiently small $h$.  Taking $h$ still smaller so that
$p\ell_h\ge1$ proves
\begin{equation}
\norm{H_h'}_{L^p(\Gamma_h)}
\le \varpi\norm{H_h}_{L^p(\Gamma_h)}.
\label{eq:tent-gradient}
\end{equation}
The logarithmic derivative of $f_h$ relative to $\Gamma_h$ is $pH_h'/H_h$ where $H_h>0$, so
\begin{equation}
\int|s_{f_h}|^pf_h\,\dd\Gamma_h
=p^p\frac{\int|H_h'|^p\,\dd\Gamma_h}
             {\int H_h^p\,\dd\Gamma_h}
\le(c_F/4)^p.
\label{eq:tent-score}
\end{equation}
This proves \eqref{eq:Follmer-support}--\eqref{eq:Follmer-score-budget}.
The radius tends to zero because $h\log(1/h)\to0$.

Gaussian convolution preserves log-concavity \cite{Prekopa71}, hence
$\partial_{xx}\log P_{h-t}f_h\le0$.  For $\tau=h-t$, let $Y$ have
Lebesgue density proportional to $f_h(y)e^{-(y-x)^2/(2\tau)}$.
Direct differentiation gives
\begin{equation}
\partial_{xx}\log P_\tau f_h(x)
=-\frac1\tau+
\frac{\Var(Y)}{\tau^2}
\ge-\frac1\tau.
\label{eq:log-heat-Hessian}
\end{equation}
This proves \eqref{eq:Follmer-derivative}.  Since $f_h$ is even,
$v_h(t,0)=0$, and therefore
\begin{equation}
|v_h(t,x)|\le\frac{|x|}{h-t}.
\label{eq:Follmer-linear-growth}
\end{equation}

\subsection{Euler approximation}
\label{app:Euler}

We drive the diffusion and its Euler approximation with the same Brownian path, as in
\cite[Appendix~C.4]{JoLee26}, and keep the cutoff index explicit.

Let $Z$ be the strong solution of \eqref{eq:Follmer-SDE} on horizon
$h_N$, driven by $B$.  All expectations in this subsection refer to this
realization.  Set
$\mathcal V_t:=v_{h_N}(t,Z_t)$ for $t<h_N$, with terminal value
$\mathcal V_{h_N}:=s_{f_{h_N}}(Z_{h_N})$.  By
\eqref{eq:score-martingale} and uniqueness in law, $\mathcal V$ is an
$L^p$-bounded martingale in the filtration of $B$,
and $\mathcal V_0=0$ by evenness.  The backward heat equation implies
$\partial_t v+\tfrac12\partial_{xx}v+v\partial_xv=0$ for $t<h_N$.
It\^o's formula along the strong solution therefore gives the explicit
representation
\begin{equation}
\mathcal V_t=\int_0^t\zeta_r\,\dd B_r,
\qquad \zeta_r=\partial_xv_{h_N}(r,Z_r).
\label{eq:M-representation}
\end{equation}
The terminal $L^p$ bound and the reverse Burkholder--Davis--Gundy
inequality give
\[
\left\|\int_0^{h_N}|\zeta_r|^2\,\dd r\right\|_{L^{p/2}}^{1/2}
\le C_p\|\mathcal V_{h_N}\|_{L^p}.
\]
Thus the representation extends to $h_N$ in $L^p$ in the Brownian
filtration of the diffusion.
For a mesh interval $[j\Delta,(j+1)\Delta]$, $\Delta=1/N$, define
\begin{align}
d_j&:=\int_{j\Delta}^{(j+1)\Delta}(\mathcal V_t-\mathcal V_{j\Delta})\,\dd t
\nonumber\\
&=\int_{j\Delta}^{(j+1)\Delta}
((j+1)\Delta-r)\zeta_r\,\dd B_r.
\label{eq:Euler-local-martingale}
\end{align}
Let $\widehat Z$ be the Euler scheme with the truncated drift and the
same Brownian increments, and put
$e_j=\widehat Z_j-\bar Z_{j\Delta}$.  For $0\le j<j_N^*$ the entire
update interval precedes the cutoff, and
$\bar Z_t=Z_t$ throughout that interval.  The mean-value theorem and
\eqref{eq:Follmer-derivative} therefore give
\begin{equation}
e_{j+1}=\vartheta_je_j-d_j,
\qquad 0\le\vartheta_j\le1,
\qquad 0\le j<j_N^*.
\label{eq:Euler-error-recursion}
\end{equation}
These updates determine the error through mesh time $j_N^*/N$.
For $j_N^*\le j<L_N$, both truncated processes have zero drift, so
$e_{j+1}=e_j$.  The partial sums of $d_j$, stopped before $j_N^*$,
are mesh values of the martingale
\begin{equation}
\int_0^t w_N(r)\zeta_r\,\dd B_r.
\label{eq:weighted-error-martingale}
\end{equation}
The deterministic weight is
\[
w_N(r)=
\begin{cases}
(j+1)\Delta-r,&j\Delta\le r<(j+1)\Delta,\quad 0\le j<j_N^*,\\
0,&r\ge j_N^*\Delta.
\end{cases}
\]
In particular, $0\le w_N\le\Delta$.
By \cref{lem:dissipative-recursion}, Doob's inequality, the
Burkholder--Davis--Gundy inequality, and its reverse form
\cite[Chapter~IV, Section~4]{RevuzYor99},
\begin{equation}
\left\|\max_j|e_j|\right\|_{L^p}
\le C_p\Delta\norm{\mathcal V_{h_N}}_{L^p}
\le\frac{C_pc_F}{4N}.
\label{eq:Euler-error-final}
\end{equation}
The error is constant after the cutoff, so this bound holds to the
terminal time.  Taking an $\ell^p$ bound over the $M$ independent rows
proves \eqref{eq:Euler-all-row}.

Before the cutoff, \eqref{eq:Follmer-derivative} gives
\begin{equation}
|v_{h_N}(j/N,\widehat Z_j)-v_{h_N}(j/N,Z_{j/N})|
\le\delta_N^{-1}|e_j|,
\label{eq:Euler-drift-comparison}
\end{equation}
which proves \eqref{eq:Euler-request-error}; the same estimate applies
at $j=j_N^*$ because $\bar Z_{j/N}=Z_{j/N}$ there.
For $j<j_N^*$, \eqref{eq:Euler-error-final} gives
\[
\|\bar v_N(j/N,\widehat Z_j)\|_{L^p}
\le\frac{c_F}{4}\left(1+\frac{C_p}{N\delta_N}\right)
\le\frac{c_F}{2}
\]
for all sufficiently large $N$, since $N\delta_N=N^{23/24}\to\infty$.
For $j\ge j_N^*$ the truncated feedback is zero, proving
\eqref{eq:Euler-request-Lp}.

Finally,
\begin{equation}
\dist(\widehat Z_{L_N},[-s_p(h_N,c_F),s_p(h_N,c_F)])
\le |\widehat Z_{L_N}-\bar Z_{h_N}|
    +|\bar Z_{h_N}-Z_{h_N}|.
\label{eq:terminal-distance-decomposition}
\end{equation}
The first term has $L^p$ norm $O(N^{-1})$ by
\eqref{eq:Euler-error-final}; the second has norm $O(\delta_N+N^{-1})$ by
\eqref{eq:truncated-full-Lp}.  A union bound over $M=O(N)$ rows and
Markov's inequality prove \eqref{eq:Euler-terminal-tail}.
\qed

\section{Quantitative estimates for the online algorithm}
\label[appendix]{app:quantitative}

This appendix gives the parameter choices and probability estimates for
\eqref{eq:good-event-tail}, and proves that the basic comparison events
exclude every failure of an admissibility test.

\subsection{Choice of parameters}
\label{app:parameters}

Fix $\theta>0$ and choose $s,p$ as in \eqref{eq:moment-choices}.
The cutoff tolerance is $\delta_N=N^{-1/24}$.  By
\eqref{eq:iterated-martingale} and
\eqref{eq:all-row-strong-approximation}, both coupling errors have
$L^s$ norm $O(N^{-(s-4)/(6s)})$.  Markov's inequality therefore gives,
for every fixed $c>0$,
\begin{equation}
\Pp(\text{either coupling error}>c\delta_N)
\le C_cN^{-(s/8-2/3)}=O(N^{-\theta}),
\label{eq:coupling-tail-exponent}
\end{equation}
since $s>8\theta+16/3$.  The predictable bias is $o(\delta_N)$ because
$s>12$ implies $1/2-2/s>1/24$.

The conditions on $p$ give
\[
\frac p{24}-1>\theta,\qquad
\frac p{2s}-1>\theta,\qquad
\frac{23p}{24}-1>\theta.
\]
Indeed, $p>2s(1+\theta)$ and $s>12$ imply the first two inequalities,
and the third follows from the first.  These are the exponents for
terminal confinement, empirical drift moments, and Euler drift comparison.
Finally, $s>2/\theta$ makes the expected-excess bound vanish.
All these choices are available under a fourth-moment bound on the
original entries, since higher input moments are used only after
truncation.  To obtain $O(N^{-A})$ bounds in
\cref{prop:truncated-polynomial}, take $\theta=2A+3$.

\subsection{Empirical moment bounds}
\label{app:empirical-moments}

We repeatedly use the following consequence of Rosenthal's inequality
\cite{Rosenthal70}.

\begin{lemma}
\label{lem:empirical-moments}
Let $A_{j,i}$, $1\le j\le J_N$, $1\le i\le M$, be such that for each fixed
$j$ the variables $(A_{j,i})_{i\le M}$ are independent and identically
distributed, and suppose
\begin{equation}
J_N\le CN,
\qquad
M\asymp N,
\qquad
\sup_{N,j}\E|A_{j,1}|^p\le C_p
\label{eq:empirical-hypotheses}
\end{equation}
for some $p\ge2s$.  Then, for each fixed $\eps>0$,
\begin{equation}
\Pp\left(
\max_{j\le J_N}\left|
\frac1M\sum_{i=1}^M|A_{j,i}|^s-
\E|A_{j,1}|^s
\right|>\eps
\right)
\le C_{p,s,\eps}J_N M^{-p/(2s)}
\le C'_{p,s,\eps}N^{1-p/(2s)}.
\label{eq:empirical-moment-tail}
\end{equation}
The same estimate holds with $s$ replaced by any $1\le r\le s$.
\end{lemma}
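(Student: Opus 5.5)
For each fixed $j$ I would apply Rosenthal's inequality to the centered i.i.d.\ variables $Y_{j,i}:=|A_{j,i}|^s-\E|A_{j,1}|^s$, $i\le M$, with exponent $q:=p/s\ge2$. The hypothesis $\sup\E|A_{j,1}|^p\le C_p$ gives $\E|Y_{j,1}|^q\le 2^q\E|A_{j,1}|^{p}\le C$. It also gives $\E Y_{j,1}^2\le C$, since $q\ge2$.

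Rosenthal then bounds
\[
\E\Bigl|\sum_{i\le M}Y_{j,i}\Bigr|^{q}
\le C_q\Bigl[(M\,\E Y_{j,1}^2)^{q/2}+M\,\E|Y_{j,1}|^q\Bigr]
\le C\,M^{q/2},
\]
where the last step uses $q\ge2$, so that $M\le M^{q/2}$. By Markov's inequality,
\[
\Pp\Bigl(\Bigl|\frac1M\sum_iY_{j,i}\Bigr|>\eps\Bigr)\le \eps^{-q}M^{-q}\,C M^{q/2}=C_\eps M^{-p/(2s)}.
\]
A union bound over $j\le J_N$ yields the first inequality in \eqref{eq:empirical-moment-tail}. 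The second inequality follows from $J_N\le CN$ and $M\asymp N$.

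For $1\le r\le s$, I would repeat the argument with $|A|^r$ in place of $|A|^s$ and exponent $q'=p/r\ge p/s\ge 2$. The same computation gives the tail bound $J_N M^{-p/(2r)}\le J_N M^{-p/(2s)}$. The only points to check are $q\ge 2$, which is exactly the hypothesis $p\ge 2s$, and that the constants depend only on $p,s,\eps$ and the uniform moment bound. I do not expect any genuine obstacle.
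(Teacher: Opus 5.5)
Your proposal is correct and follows the paper's proof essentially verbatim: Rosenthal's inequality with exponent $p/s\ge2$ applied to the centered variables $|A_{j,i}|^s-\E|A_{j,1}|^s$, then Markov's inequality and a union bound over $j\le J_N$. Your handling of the case $1\le r\le s$, using exponent $p/r$ and the bound $M^{-p/(2r)}\le M^{-p/(2s)}$, is a valid filling-in of the step the paper states without detail.
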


\begin{proof}
Apply Rosenthal's inequality with exponent $p/s\ge2$ to the centered
variables $|A_{j,i}|^s-\E|A_{j,i}|^s$.  It gives
\begin{equation}
\E\left|\frac1M\sum_{i=1}^M
(|A_{j,i}|^s-\E|A_{j,i}|^s)\right|^{p/s}
\le C M^{-p/(2s)}.
\label{eq:Rosenthal-mean}
\end{equation}
Markov's inequality and a union bound give the first estimate.
The assumptions $J_N=O(N)$ and $M\asymp N$ give the second.
\end{proof}

For the drifts evaluated along the ideal Euler schemes,
\eqref{eq:Euler-request-Lp} supplies
\eqref{eq:empirical-hypotheses}.  By \eqref{eq:moment-choices}, the exponent in
\eqref{eq:empirical-moment-tail} is smaller than $-\theta$.  The pre-split drift is bounded and piecewise defined on finitely many
blocks, so its empirical moments have exponential tails.  The ideal terminal variable $Y_T^\beta$ is
a Gaussian variable plus a bounded drift, hence has moments of all orders.
Applying the same calculation at the final time gives the polynomial
probability bound in \eqref{eq:empirical-deficit}.

\subsection{Discretization of the observation times}
\label{app:rounding}

Let $\mathcal T$ be the finite set of observation and block times in
\cref{def:finite-observation}.  Round each $t\in\mathcal T$ down to
$\lfloor Nt\rfloor/N$.  For independent Brownian motions,
\begin{equation}
\max_{i\le M,\,t\in\mathcal T}
|B_i(t)-B_i(\lfloor Nt\rfloor/N)|
=O_{\Pp}\left(\sqrt{\frac{\log N}{N}}\right).
\label{eq:Brownian-rounding}
\end{equation}
Moreover, since $\delta_N=N^{-1/24}$,
\begin{equation}
\Pp\left(
\max_{i,\,t\in\mathcal T}|B_i(t)-B_i(\lfloor Nt\rfloor/N)|
>c\delta_N
\right)
\le C e^{-c'N^{11/12}}.
\label{eq:Brownian-rounding-tail}
\end{equation}
These are immediate from the Gaussian tail bound and a union bound over
$O(M)$ increments.

\subsection{Admissibility and terminal integrability}
\label{app:stopping}

Assume $N\ge N_0$ and $M/N\in I_\alpha$, as holds eventually along
every sequence under consideration.  Let $\tau$ be the first failed
test of the candidate drift
$b_k+\gamma\eta_k$, and set $\tau=N+1$ if there is no failure.  At a
failure the applied drift is replaced immediately by $\gamma\eta_k$,
and all later steps use this auxiliary drift.  Hence the applied
drifts satisfy \eqref{eq:drift-cap}--\eqref{eq:drift-delocalization}
on every trajectory.  Thus \cref{prop:iterated-coupling} applies
globally, including at and after a failed test.  Only comparisons with the intended drifts are restricted to times before $\tau$.

Define $\cG_N$ as the intersection of the following comparison events:
the two global
coupling errors are at most a sufficiently small fixed multiple of
$\delta_N$; the finitely many Brownian rounding errors and the required
partial perturbation sums have the same bound; the ideal pre-split
empirical norms and the empirical moment of $d_a(Y_{T,i}^\beta)$ have their
prescribed strict slack; all empirical norms of the ideal Euler drifts satisfy their
bounds; the terminal confinement errors lie inside the unused radius;
and the complement of the small-norm event in
\eqref{eq:perturbation-norm-event} holds
at every candidate drift, including the first failed test if there is
one.  The norm estimate holds uniformly over all base drifts.
Its failure probability is exponentially
small by conditioning at each step on the causal past, since $b_k$ is
computed before $\eta_k$ is drawn.

We prove that all admissibility tests pass on $\cG_N$ by excluding a
first failure.  Suppose first that
$\tau\le k_0$.  Every observation used to form $b_\tau$ has index at most
$\tau-1$ by \eqref{eq:causal-mesh-control}.  The comparison between $W$ and $B$
and the Lipschitz bounds for $F_j$ therefore compare the candidate drift
$b_\tau$ with the ideal Brownian drift before the test at that step.
Its empirical $L^2$ and $L^s$ norms lie inside the allowed
bounds.  The triangle inequality controls the energy after perturbation,
and the perturbation lower-norm bound controls its normalized maximum coordinate.
The choice of $\Lambda$ and the fixed slack imply that all three tests
pass at step $\tau$, a contradiction.

If $k_0<\tau\le N$, then no test has failed through time $k_0$, so the
split-time comparison gives \eqref{eq:correction-Ls}.  Put $j=\tau-k_0-1$.
All updates in the final block producing $U_j$ occurred before $\tau$.
The comparison of the nondecreasing, one-Lipschitz Euler updates bounds
$\norm{U_j-\widehat Z_j}_\infty$ by a small multiple of $\delta_N$.
Before the cutoff, the feedback Lipschitz constant is at most
$\delta_N^{-1}$; after the cutoff both feedbacks are zero.  Thus the
feedback at step $\tau$ differs by at most the fixed drift
slack from the ideal feedback.  Combining the ideal empirical norms with
\eqref{eq:correction-Ls} gives the required $L^2$ and $L^s$
bounds.  The same perturbation argument makes the magnitude and
delocalization tests pass.  This is again a contradiction.  Consequently
$\cG_N\subset\{\tau>N\}$, and the terminal identity applies on $\cG_N$.

We estimate each defining failure event separately.
The Burkholder--Rosenthal inequality in the causal filtration gives
\begin{equation}
\Pp\left(\max_k\norm{Q_k}_\infty>c\delta_N\right)
\le C N^{-(s/8-2/3)}.
\label{eq:Q-tail-delta}
\end{equation}
The Gaussian strong approximation has the same exponent.  The
probability that the Brownian rounding errors or the partial perturbation
sums exceed a fixed multiple of $\delta_N$ is exponentially small,
because $\delta_N=N^{-1/24}$.  The ideal empirical and confinement
bounds follow from \cref{lem:empirical-moments} and
\eqref{eq:Euler-terminal-tail}--\eqref{eq:Euler-request-error}.
The ideal variables $Y_T^\beta$ have moments of every fixed order, so the
failure probability for the empirical moment of their distances to $[-a,a]$ can be made
$O(N^{-\theta})$ as well.  The union bound and the exponent choices give
$\Pp(\cG_N^c)=O(N^{-\theta})$.

Finally we verify the second-moment bound
\eqref{eq:global-L2-envelope}.  Because the applied drift has empirical
$L^s$ norm at most $\Lambda$, every coordinate satisfies
\begin{equation}
|r_{k,i}|\le \Lambda M^{1/s},
\label{eq:coordinate-request-envelope}
\end{equation}
and consequently
\begin{equation}
\max_i\left|\frac1N\sum_{k=1}^Nr_{k,i}\right|
\le \Lambda M^{1/s}.
\label{eq:drift-envelope}
\end{equation}
At time $N$, \eqref{eq:iterated-decomposition} and
\eqref{eq:all-row-strong-approximation} express $N^{-1/2}\sum_{k=1}^N S_kZ_k$ as a vector
of independent $N(0,1)$ variables plus an $L^{s}$ error of order
$N^{-(s-4)/(6s)}$.  This bound concerns the truncated discrepancy
$D^{\mathrm{tr}}_{M,N}$.  The Gaussian maximum has $L^2$ norm $O(\sqrt{\log M})$.
The bias and martingale terms in \eqref{eq:iterated-decomposition} have
bounded $L^2$ norm by \eqref{eq:iterated-bias} and
\eqref{eq:iterated-martingale}.  Combining these estimates proves
\eqref{eq:global-L2-envelope}.

\end{document}